\numberwithin{equation}{section}
\newtheorem{maintheorem}{Theorem}
\newtheorem{maincoro}[maintheorem]{Corollary}
\newtheorem{theorem}{Theorem}[section]
\newtheorem*{theorem*}{Theorem}
\newtheorem{lemma}[theorem]{Lemma}
\newtheorem{claim}[theorem]{Claim}
\theoremstyle{definition}{

\newtheorem{remark}[theorem]{Remark}
\newtheorem{definition}[theorem]{Definition}
\newtheorem*{definition*}{Definition}

}
\theoremstyle{remark}{
\newtheorem*{remark*}{Remark}

}
\newcommand{\Z}{\mathbb Z}
\newcommand{\E}{\mathbb{E}}
\renewcommand{\P}{\mathbb{P}}
\DeclareMathOperator{\var}{Var}
\newcommand{\gap}{\lambda}
\newcommand{\sob}{\alpha_{\text{\tt{s}}}}
\newcommand{\tmix}{t_\textsc{mix}}
\newcommand{\tv}{{\textsc{tv}}}
\newcommand{\Po}{\operatorname{Po}}
\newcommand{\one}{\mathbbm{1}}
\renewcommand{\epsilon}{\varepsilon}
\renewcommand{\phi}{\varphi}
\newcommand{\tX}{\tilde{X}}
\newcommand{\sparse}{\mathcal{S}}
\newcommand{\ltwo}{{\mathfrak{m}}}
\DeclareMathOperator{\dist}{dist}
\newcommand{\gapinf}{\lambda^\star}
\newcommand{\sobinf}{\alpha_{\text{\tt{s}}}^\star}
\date{}
\begin{document}
\title{Cutoff for the Ising model on the lattice}

\author{Eyal Lubetzky}
\address{Eyal Lubetzky\hfill\break
Microsoft Research\\
One Microsoft Way\\
Redmond, WA 98052-6399, USA.}
\email{eyal@microsoft.com}
\urladdr{}

\author{Allan Sly}
\address{Allan Sly\hfill\break
Microsoft Research\\
One Microsoft Way\\
Redmond, WA 98052-6399, USA.}
\email{allansly@microsoft.com}
\urladdr{}

\begin{abstract}
Introduced in 1963, Glauber dynamics is one of the most practiced and extensively studied methods for sampling the Ising model on lattices.
It is well known that at high temperatures, the time it takes this chain to mix in $L^1$ on a system of size $n$ is $O(\log n)$. Whether in this regime there is \emph{cutoff}, i.e.\ a sharp transition in the $L^1$-convergence to equilibrium, is a fundamental open problem: If so, as conjectured by Peres, it would imply that mixing occurs abruptly at $(c+o(1))\log n$
for some fixed $c>0$,
thus providing a rigorous stopping rule for this MCMC sampler. However, obtaining the precise asymptotics of the mixing and proving cutoff can be extremely challenging even for
fairly simple Markov chains.
Already for the one-dimensional Ising model, showing cutoff is a longstanding open problem.

We settle the above by establishing cutoff and its location at the high temperature regime of the
Ising model on the lattice
with periodic boundary conditions. Our results hold for any dimension and at any temperature
where there is strong spatial mixing: For $\Z^2$ this carries all the way to the critical temperature.
Specifically, for fixed $d\geq 1$, the continuous-time Glauber dynamics
for the Ising model on $(\Z/n\Z)^d$ with periodic boundary conditions has cutoff at
$(d/2\lambda_\infty)\log n$, where $\lambda_\infty$ is the spectral gap of the dynamics on the
infinite-volume lattice. To our knowledge, this is the first time where cutoff is shown for a Markov chain where
even understanding its stationary distribution is limited.

The proof hinges on a new technique for translating $L^1$-mixing to $L^2$-mixing of projections of the chain,
which enables the application of logarithmic-Sobolev inequalities. The technique is general and carries to other
monotone and anti-monotone spin-systems,
e.g.\ gas hard-core, Potts, anti-ferromagentic Ising, arbitrary boundary conditions, etc.
\end{abstract}

\maketitle

\vspace{-1cm}

\section{Introduction}\label{sec:intro}

The total-variation \emph{cutoff phenomenon} describes a sharp transition in the $L^1$-mixing of a finite ergodic Markov chain:
Over a negligible time period, the distance of the chain from equilibrium drops abruptly from near its maximum to near $0$.
Though believed to be widespread, including many important families of chains arising from statistical physics,
cutoff has been rigorously shown only in relatively few cases (ones where the stationary distribution is completely understood and
has many symmetries, e.g.\ uniform on the symmetric group).
Here we establish cutoff for Glauber dynamics for the Ising model, one of the most studied models in mathematical physics.

Already establishing the order of the mixing time is in many cases challenging, with an entire industry devoted to the study of such problems.
Proving cutoff and its location entails not only obtaining the order, but also deriving the precise asymptotics of the time it takes the chain to mix.
In his 1995 survey of the cutoff phenomenon Diaconis \cite{Diaconis} wrote
``\emph{At present writing, proof of a cutoff is a difficult, delicate
affair, requiring detailed knowledge of the chain, such as all
eigenvalues and eigenvectors. Most of the examples where this
can be pushed through arise from random walk on groups, with
the walk having a fair amount of symmetry}''.
To this date this essentially remained to be the case, and present technology
(e.g., representation theory, spectral theory, techniques for analyzing 1-dimensional chains, etc.) does not suffice for proving cutoff in high-dimensional
chains with limited understanding of their stationary distribution, such as Glauber dynamics for the Ising model (stochastic Ising model) on the 3-dimensional lattice.

Introduced in 1963 \cite{Glauber}, 
Glauber dynamics for the Ising model on the lattice (see Section~\ref{sec:intro-previous} for formal definitions) is one of the most practiced methods to sample
the Gibbs distribution, and an extensively studied dynamical system in itself, having a rich interplay of properties with the static stationary distribution.
For instance, as we describe in Section~\ref{sec:intro-previous}, it is known that on $(\Z/\Z_n)^2$, at the critical inverse-temperature $\beta_c$ for uniqueness of the static Gibbs distribution,
the spectral gap of the Markov semigroup generator of the Glauber dynamics exhibits a phase-transition from being uniformly bounded
to tending to $0$ exponentially fast in $n$.
It is further known that at high temperatures on $(\Z/\Z_n)^d$ this dynamics mixes in time $O(\log n)$,
yet the precise asymptotics of the $L^1$-mixing time were unknown even in the one-dimensional case: It is an open problem of
Peres (cf.\ \cite{LPW}) to determine cutoff for the Ising model on $\Z/\Z_n$.

The only underlying geometry for which cutoff for the Ising model had so far been established is the complete graph (\cites{LLP,DLP}),
where the high symmetry reduces the analysis to a birth-and-death magnetization chain.
However, this sheds no light on the existence of cutoff for lattices, where there is no such reduction.
Peres (\cites{LLP,LPW}) conjectured that in any dimension $d$, Glauber dynamics for the Ising model on $(\Z/\Z_n)^d$ should exhibit cutoff.

Our main results confirm the above conjecture and moreover establish cutoff and its location in a wide range of spin system models and geometries.
We first formulate this for the classical two-dimensional Ising model.

\vspace{-0.25cm}

\begin{maintheorem}\label{mainthm-Z2}
Let $\beta_c=\frac12\log(1+\sqrt{2})$ be the critical inverse-temperature for the Ising model on $\Z^2$.
Then the continuous-time Glauber dynamics for the Ising model on $(\Z/n\Z)^2$ at inverse-temperature $0 \leq \beta < \beta_c$ with periodic
boundary conditions has cutoff at $\lambda_\infty^{-1}\log n $ with a window of $O(\log\log n)$, where
$\lambda_\infty$ is the spectral gap of the dynamics on the infinite-volume lattice.
\end{maintheorem}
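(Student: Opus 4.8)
The plan is to prove $\tmix(\varepsilon)=\lambda_\infty^{-1}\log n+O(\log\log n)$ by matching lower and upper bounds. By monotonicity it suffices to analyze the dynamics $X^+$ started from the all-plus configuration (together with its mirror image from all-minus): for a monotone spin system the extremal starts are, up to a factor, the slowest to mix, and in the grand monotone coupling $X^+_t$ dominates every other start and stationarity. The cutoff location is driven entirely by the one-site quantity $m(t):=\E[X^+_t(0)]$, and a preliminary input — and the place where $\lambda_\infty$ enters — is that at $0\le\beta<\beta_c$ this decays at the infinite-volume spectral gap rate, $m(t)=e^{-\lambda_\infty t}\,t^{\pm O(1)}$, with only polynomial-in-$t$ corrections. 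The bound $m(t)\le t^{O(1)}e^{-\lambda_\infty t}$ follows by comparing with Glauber dynamics on a fixed large box with plus boundary (spectral gap at least $\lambda_\infty$) and using strong spatial mixing — valid on $\Z^2$ up to $\beta_c$ — to pass to infinite volume and back to the torus (the torus and $\Z^2$ dynamics from all-plus agree up to $n^{-\omega(1)}$ for all $t\le 2\lambda_\infty^{-1}\log n$); the matching lower bound, that the magnetization is asymptotically the slowest mode, is a more delicate spectral fact. \emph{Lower bound.} Using $f(\sigma)=\sum_v\sigma_v$ as a distinguishing statistic, strong spatial mixing gives $\E_\pi f=0$ and $\var_\pi(f)\asymp n^2$ (finite susceptibility), and likewise $\var(f(X^+_t))\asymp n^2$ via the monotone coupling, while $\E f(X^+_t)=n^2 m(t)$. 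Since $m(t)\ge t^{-O(1)}e^{-\lambda_\infty t}$, this mean exceeds $C\sqrt{\var}\asymp n$ for all $t\le\lambda_\infty^{-1}\log n-O(\log\log n)$, so Chebyshev's inequality forces $d_\tv(\mathcal L(X^+_t),\pi)\to1$ on that range.

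\emph{Upper bound.} The core is the paper's ``$L^1$-to-$L^2$'' reduction. Fix $r=C\log n$ and let $S\subset(\Z/n\Z)^2$ be a sparse skeleton — a grid of lines of spacing $r$, whose removal disconnects the torus into squares of side $<r$. The reduction is a bound of the form
\[
d_\tv\big(\mathcal L(X^+_t),\pi\big)\;\le\;C'\Big(\sum_{v\in S}\E[X^+_{t-s}(v)]^2\Big)^{1/2}+o(1),\qquad s=O(\log\log n),
\]
so that the total-variation mixing of the full chain is controlled by the $L^2$-type discrepancy of its projection onto $S$, at an additive cost of $s=O(\log\log n)$. It is this reduction that makes the logarithmic-Sobolev inequality applicable: at $\beta<\beta_c$ strong spatial mixing yields a log-Sobolev constant $\sob=\Omega(1)$ for Glauber dynamics in a box of side $r$ under \emph{any} boundary condition (Stroock--Zegarli\'{n}ski, Martinelli--Olivieri), and by hypercontractivity such a box $L^2$-equilibrates in time $O(\sob^{-1}\log\log(1/\pi_{\min}))=O(\log\log n)$ — the double logarithm being exactly what keeps the window at $O(\log\log n)$ rather than $O(\log n)$. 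Combined with the monotone grand coupling and a censoring argument (Peres--Winkler: censoring the skeleton updates during the last $s$ steps only increases the distance to $\pi$ for a monotone chain from all-plus), this lets one replace the conditional law of the complement of $S$ by the true Gibbs conditional (which, since $S$ disconnects the torus, is a product over the squares) and so pass from $L^2$-closeness of the projection back to $\tv$-closeness of the whole configuration.

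Granting the reduction, the rest is a short computation. Since the sites of $S$ are at distance $\ge C\log n$ apart, strong spatial mixing makes $\pi_S$ within total variation $n^{-\omega(1)}$ of a product measure, and a decorrelation estimate for the dynamics does the same for $\mathcal L((X^+_{t-s})_S)$; hence the displayed discrepancy is $\asymp\big(|S|\,m(t-s)^2\big)^{1/2}\lesssim t^{O(1)}\big((n^2/\log^2 n)\,e^{-2\lambda_\infty t}\big)^{1/2}$, which is $o(1)$ as soon as $t\ge\lambda_\infty^{-1}\log n+O(\log\log n)$. The crucial feature is that the controlling quantity is the \emph{square-sum} $\sum_{v\in S}m^2$, not the sum $\sum_{v\in S}m$ that a naive coupling or discrepancy count would produce: this halves the exponent, replacing $2\lambda_\infty^{-1}\log n$ by $\lambda_\infty^{-1}\log n$.

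\emph{Main obstacle.} The heart of the proof — and its only genuinely new ingredient — is the $L^1$-to-$L^2$ reduction itself: justifying that $L^2$-closeness of the sparse projection forces $\tv$-closeness of the full chain after only $O(\log\log n)$ further steps. This requires simultaneously controlling the monotone grand coupling, the censoring inequality with the correct monotonicity direction, and the uniform log-Sobolev inequality under arbitrary boundary conditions; it is precisely what evades both the factor-$2$-too-large bound of a naive discrepancy count and the wrong-constant bound $\asymp\sob^{-1}\log n$ one would get by applying log-Sobolev directly to the full chain (as $\sob$ need not be comparable to $\lambda_\infty$). A secondary technical point is pinning the decay of $m(t)$ to rate exactly $\lambda_\infty$ with only polynomial corrections and transferring it between $\Z^2$ and the torus, which is what fixes the cutoff location and the width of the window.
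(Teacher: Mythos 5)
Your overall architecture (a magnetization statistic for the lower bound, an $L^1$-to-$L^2$ reduction with a $\log\log n$ burn-in supplied by log-Sobolev for the upper bound) points in the right direction, but the two load-bearing steps have genuine gaps. The central one is the reduction itself. Your set $S$ is a grid of \emph{lines} of spacing $C\log n$: such a set does disconnect the torus, but its sites are adjacent along each line, so the claim that ``the sites of $S$ are at distance $\geq C\log n$ apart'' --- on which the bound by $\big(\sum_{v\in S}\E[X^+_{t-s}(v)]^2\big)^{1/2}$ rests --- is false. The two properties you need (disconnection into boxes, and pairwise-remote sites making the skeleton marginal a near-product over single sites) are mutually exclusive, and for a union of lines nothing bounds the total-variation distance of the skeleton marginal by a square-sum of single-site magnetizations; single-site means do not control a joint law even for genuinely independent blocks, where one needs a per-block $L^2$/$\chi^2$ quantity. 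The censoring step also fails on accounting: freezing the skeleton during the last $s=O(\log\log n)$ and letting each $(C\log n)$-box relax under its (arbitrary) boundary condition brings each box only within distance roughly $e^{-\lambda(s-O(\log\log n))}$, i.e.\ polylogarithmically small, of its conditional Gibbs measure; summed (or $\chi^2$-multiplied) over the $\asymp n^2/\log^2 n$ boxes this is enormous, and making it $o(1)$ would force $s\asymp\log n$, destroying the window. The paper avoids exactly this trap: its sparse set is not a deterministic skeleton but the \emph{random update support} of a block (``barrier'') dynamics over the window $s$; monotonicity plus log-Sobolev shows this support consists w.h.p.\ of small clusters pairwise $\gtrsim\log^2 n$ apart (Lemma~\ref{lem-sparse-prob}), so only the law of $X_t$ projected onto those clusters matters (Theorem~\ref{thm-xt-bound-a-sparse}); that projection is compared to a product of dynamics on small tori, and the $L^1\to L^2$ passage is a Cauchy--Schwarz over that product, with per-cluster quantity the full $L^2$ distance $\ltwo_t$ of a box projection (not a single-site magnetization), estimated as $e^{-\lambda(r)t\pm O(\log\log n)}$ via Theorem~\ref{thm-l2-sobolev}.

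The second gap is your ``preliminary input'' $m(t)=e^{-\lambda_\infty t}\,t^{\pm O(1)}$, which is what locates the cutoff at $\lambda_\infty$ and fixes the window, and which is not available off the shelf. Holley's theorem gives only $m(t)=e^{-(\lambda_\infty+o(1))t}$, too weak for an $O(\log\log n)$ window; the assertion that a finite box with plus boundary has spectral gap at least $\lambda_\infty$ is unjustified (relating finite-volume gaps to $\lambda_\infty$ with polynomial error is precisely the content of Theorem~\ref{mainthm-spectral}, which the paper proves by comparing the cutoff locations obtained for two different choices of the small-torus size $r$ and then identifying the limit with $\lambda_\infty$ through the Dirichlet form together with the decay of $\xi_t$); and the matching \emph{lower} bound on $m(t)$, which your Chebyshev argument needs, is exactly the ``delicate spectral fact'' you defer. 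As it stands, both halves of your proof rest on statements that are consequences of, rather than inputs to, the argument you are trying to give.
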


\begin{figure}[h]
\centering \includegraphics[width=3.5in]{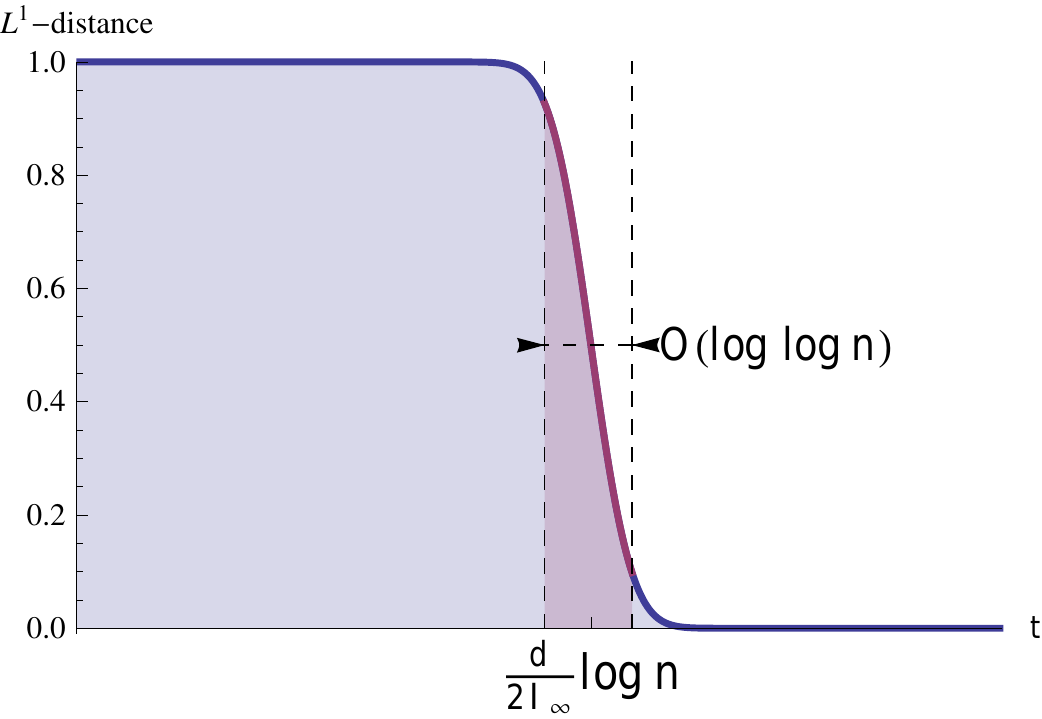}
\caption{Cutoff phenomenon for the $L^1$ (total-variation) distance from stationarity
along time in Glauber dynamics for the Ising model on $\Z_n^d$,
 as established by Theorem~\ref{mainthm-Zd}. Highlighted region denotes the cutoff window of $O(\log\log n)$.}
\label{fig:cutoff}
\end{figure}

In the above theorem, the term \emph{cutoff window} refers to the rate at which the $L^1$-distance from stationarity drops from near $1$ to near $0$.
More precisely, let $\tmix(\epsilon)$ be the minimum $t \geq 0$ where the heat-kernel $H_t$ associated
with a Markov chain is within a total-variation distance of $\epsilon$ from stationarity.
A family of chains is said to exhibit cutoff if for every fixed $0 < \epsilon < \frac12$
we have $\tmix(\epsilon)/\tmix(1-\epsilon) \to 1$ as the system size tends to $\infty$.
A sequence $w_n$ is said to be a cutoff window if $\tmix(\epsilon) = \tmix(1-\epsilon) + O(w_n)$ for every $\epsilon$.

Our results hold for $(\Z/n\Z)^d$ in any dimension $d$, inverse-temperature $\beta$ and
external field $h$ so that the corresponding static Gibbs distribution has a spatial dependence property
known as \emph{strong spatial mixing} (and also as \emph{regular complete analyticity}). This property,
 defined by Martinelli and Olivieri in their seminal paper \cite{MO},
 holds in all regimes where $O(\log n)$ mixing is known for Glauber dynamics for the Ising model. 
 In particular (see \cites{MO,MO2,MOS}), on $\Z^2$ there is strong spatial mixing
for any $\beta$ with an external field $h \neq 0$,
as well as for any $0 \leq \beta < \beta_c$ when $h=0$.

The next result settles the conjecture of Peres for cutoff for the high temperature regime of the Ising model in any dimension $d \geq 1$.

\begin{maintheorem}\label{mainthm-Zd}
Let $d\geq 1$ and consider the continuous-time Glauber dynamics for the ferromagnetic Ising model on $(\Z/n\Z)^d$ with
periodic boundary conditions, inverse-temperature $\beta$ and external field $h$.
Suppose that $\beta,h$ are such that there is strong spatial mixing. Then the dynamics exhibits cutoff at
$(d/2\lambda_\infty) \log n$ with a window of $O(\log\log n)$, where $\lambda_\infty$ is the spectral gap of the dynamics on the
infinite-volume lattice.
\end{maintheorem}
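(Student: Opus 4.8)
The plan is to prove matching upper and lower bounds, showing that $\tmix(\epsilon)=\frac{d}{2\lambda_\infty}\log n + O_\epsilon(\log\log n)$ for every fixed $\epsilon\in(0,\tfrac12)$, which gives both the cutoff and the $O(\log\log n)$ window. Since Glauber dynamics for the ferromagnetic Ising model admits a monotone (grand) coupling, the extremal starts are the all-plus configuration $\one$ and the all-minus configuration; these are handled by symmetric arguments, so it suffices to bound $\|H_t(\one,\cdot)-\mu\|_\tv$, where $\mu$ is the Gibbs measure. Nothing below is dimension-specific: $\lambda_\infty>0$ enters only as a black box (it is positive precisely because strong spatial mixing forces the infinite-volume dynamics to have a positive log-Sobolev constant), and I will use two further standard consequences of strong spatial mixing, due to Martinelli--Olivieri: (i) Glauber dynamics on any finite box, under any boundary condition, has a logarithmic-Sobolev constant bounded below by a positive constant uniform in the box; and (ii) truncated two-point correlations under $\mu$, and along the dynamics, decay exponentially in distance. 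In particular the case $d=1$ (Peres's problem) comes for free.

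\textbf{Lower bound.} Track the signed magnetization $S_t=\sum_{x\in(\Z/n\Z)^d}\sigma_t(x)$. A semigroup estimate, together with the fact (itself a consequence of strong spatial mixing) that the slowest-decaying mode of the monotone dynamics aligns with the magnetization and that the relevant relaxation rates on the torus approach $\lambda_\infty$ fast enough in $n$, gives $\E_\one[\sigma_t(0)]-\mu(\sigma(0))\asymp e^{-\lambda_\infty t}$ with a strictly positive prefactor, hence $\E_\one S_t-\E_\mu S\asymp n^d e^{-\lambda_\infty t}$; meanwhile $\var_\one(S_t)$ and $\var_\mu(S)$ are $O(n^d)$ by the exponential decay of correlations in (ii). Thus at $t=\frac{d}{2\lambda_\infty}\log n - C\log\log n$ the two means of $S_t$ differ by $\asymp (\log n)^{\Theta(C)}\sqrt{n^d}$, exceeding every fixed multiple of the two standard deviations once $C$ is large, so Chebyshev makes $S_t$ a distinguishing statistic and $\|H_t(\one,\cdot)-\mu\|_\tv\to1$.

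\textbf{Upper bound: $L^1$ via $L^2$ of a coarse-graining.} This is the crux and the main new idea. Tile the torus into $m\asymp (n/\ell)^d$ blocks $\{B_i\}$ of side $\ell=\polylog n$, and let $\Phi(\sigma)=\big(\sum_{x\in B_i}\sigma(x)\big)_{i\le m}$ be the vector of block-sums. Disintegrating over $\Phi$ yields
\[
\big\|H_t(\one,\cdot)-\mu\big\|_\tv \;\le\; \big\|\mathcal L(\Phi(X_t))-\Phi_*\mu\big\|_\tv \;+\; \E\big[\,\big\|\mathcal L\big(X_t\mid \Phi(X_t)\big)-\mu\big(\cdot\mid\Phi\big)\big\|_\tv\,\big],
\]
and I bound the first term by $\tfrac12\sqrt{\chi^2\big(\mathcal L(\Phi(X_t))\,\big\|\,\Phi_*\mu\big)}$ via Cauchy--Schwarz. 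It is the square root here that gains the factor $2$ over the naive coupling bound $\E_\one|\{x:X_t(x)\ne Y_t(x)\}|\asymp n^d e^{-\lambda_\infty t}$ (with $Y_t\sim\mu$ a monotonically coupled stationary copy), which becomes $o(1)$ only at time $\frac{d}{\lambda_\infty}\log n$: the true total-variation distance is governed not by the \emph{number} of discrepancies but by how their coarse profile deviates from equilibrium. For the $\chi^2$ term, (ii) and a central limit theorem make $\Phi_*\mu$ close to a Gaussian on $\R^m$ with covariance of order $\ell^d I$ (mean zero when $h=0,\ \beta<\beta_c$, and a controlled nonzero drift otherwise), while after the $O(\log n)$-mixing burn-in $\mathcal L(\Phi(X_t))$ is a perturbation of it with per-block mean displaced by $\asymp \ell^d e^{-\lambda_\infty t}$ and excess covariance of lower order; a Gaussian (equivalently, direct second-moment) estimate then gives $\chi^2\lesssim m\,(\ell^d e^{-\lambda_\infty t})^2/\ell^d = n^d e^{-2\lambda_\infty t}$, which drops below $(\log n)^{-1}$ exactly once $t\ge\frac{d}{2\lambda_\infty}\log n + C\log\log n$. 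The second, conditional term I make negligible by running an extra $O(\log\log n)$ of dynamics and a censoring / block-dynamics comparison: by monotonicity, after the burn-in the law of $X_t$ lies, in the stochastic order, below a measure that conditionally on each block's exterior agrees with the local Gibbs measure, so by (i) each block equilibrates conditionally in time $O(\log(\ell^d))=O(\log\log n)$ with per-block mismatch $e^{-\Omega(\ell)}$, summing to $o(1)$ over the $m$ blocks.

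\textbf{Main obstacle.} The delicate step is precisely this conditional term: one must show that after the burn-in $\mathcal L(X_t)$ has genuinely equilibrated \emph{within} blocks given the coarse variables, uniformly over the random, sparse discrepancy pattern inherited from the all-plus start — this is where strong spatial mixing is worked hardest, through both the uniform log-Sobolev constant and the decorrelation of distant blocks, and where the intermediate scale $\ell$ must be balanced: large enough that the boundary errors $e^{-\Omega(\ell)}$ and the Gaussian-approximation errors for $\Phi_*\mu$ are negligible, yet small enough that within-block relaxation costs only $O(\log\log n)$ and $m\asymp(n/\ell)^d$ leaves the cutoff location at $\frac{d}{2\lambda_\infty}\log n$ unchanged. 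A secondary point, feeding both the lower bound and the $\chi^2$ estimate, is to verify that the decay rate of the coarse statistics is the \emph{infinite-volume} gap $\lambda_\infty$ rather than the torus gap; this follows from a comparison showing the finite-size correction to the spectral gap vanishes fast enough in $n$ in the strong spatial mixing regime, so that every exponential rate above may be replaced by $\lambda_\infty$.
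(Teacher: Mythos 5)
Your outline captures the right high-level shape (a Cauchy--Schwarz/$L^2$ gain over the naive coupling bound, a product-like structure over $\sim(n/\mathrm{polylog}\,n)^d$ blocks, log-Sobolev for short burn-ins, and the need to pass from finite-volume rates to $\lambda_\infty$), but several load-bearing steps are asserted rather than proved, and at least two would fail as written. First, both your lower bound and your $\chi^2$ estimate feed on the claim that $\E_{\underline 1}[\sigma_t(0)]-\mu(\sigma(0))\asymp e^{-\lambda_\infty t}$ (even up to polylog factors) and that $|\lambda(n)-\lambda_\infty|=O(\log\log n/\log n)$. Neither is an off-the-shelf consequence of strong spatial mixing: the quantitative convergence of torus gaps to $\lambda_\infty$ is one of the paper's \emph{outputs} (Theorem~\ref{mainthm-spectral}), obtained by running the cutoff machinery on tori of two different polylogarithmic sizes, and the sharp decay of the plus/minus single-site discrepancy with only $\log\log$ corrections is Claim~\ref{cl:magnetizationDecay}, again derived from that machinery (via \eqref{eq-L2-bounds} and \eqref{e:l2UpperBound}). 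Using these as black-box inputs makes your argument circular, or at least leaves its hardest quantitative content unproved. (Also, your opening reduction to the all-plus start for the \emph{upper} bound --- ``the extremal starts are $\underline 1$ and $\underline{-1}$'' --- is not a known consequence of monotonicity for worst-case total variation; the paper keeps $\max_{x_0}$ throughout and handles arbitrary starts through the update-support device.)

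Second, the crux of your upper bound does not go through as sketched. Your tiling blocks are adjacent, so the block-sum vector $\Phi$ is dependent both under $\mu$ and under the law of $X_t$; $\chi^2$ does not tensorize under weak dependence, and a CLT-level Gaussian approximation is far too coarse to control a $\chi^2$ divergence (which requires uniform control of density ratios, including in the moderate-deviation range). The paper's key missing idea here is how to manufacture \emph{genuine} independence at the relevant spatial scale: it shows that over a time window of length $\Theta(\log\log n)$ the update support is, with probability $1-n^{-10d}$, a \emph{sparse} set whose clusters have diameter $O(\log^3 n)$ and pairwise distance $\gg$ the propagation range (Lemma~\ref{lem-sparse-prob}, via monotone coalescence inside blocks and log-Sobolev), and then uses the barrier dynamics to replace the projection onto these clusters by a true product of dynamics on small tori, after which the Cauchy--Schwarz computation is exact. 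This also disposes of your conditional term: outside the support the final configuration is a deterministic function of the updates alone, so no statement of the form ``$\mathcal L(X_t\mid\Phi(X_t))\approx\mu(\cdot\mid\Phi)$'' is ever needed --- which is fortunate, because conditioning on block sums does not interact with monotonicity or censoring in the way you describe, and a per-block log-Sobolev bound says nothing about that conditional law. Finally, for the lower bound the paper does not use the global magnetization and Chebyshev; it runs a CLT for the logarithm of the product of per-box likelihood ratios of genuinely independent small-torus dynamics, which is what yields $\|\cdot\|_\tv\to1$ exactly when $(n/\log^3n)^d\ltwo_t^2\to\infty$ and hence the $O(\log\log n)$ window.
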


In the special case of $d=1$ and no external field, it is known that strong spatial mixing always holds and
that the spectral gap at inverse-temperature $\beta$ is $1-\tanh(2\beta)$ independent of the system size (cf., e.g., \cite{LPW}).
We thus have the following corollary to establish the asymptotic mixing time of the one-dimensional Ising model, answering
the aforementioned question of \cite{LPW}.
\begin{maincoro}\label{maincoro-1d}
For any $\beta\geq 0$, the continuous-time Glauber dynamics for the Ising model on $\Z/n\Z$ with periodic boundary conditions, inverse-temperature $\beta$ and no external field has cutoff at $\frac12(1-\tanh(2\beta))^{-1} \log n$.
\end{maincoro}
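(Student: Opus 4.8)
The plan is to obtain Corollary~\ref{maincoro-1d} as an immediate specialization of Theorem~\ref{mainthm-Zd} to $d=1$ and external field $h=0$; the only genuine work is to certify the hypothesis of that theorem in this degenerate setting and to pin down the infinite-volume spectral gap $\lambda_\infty$ explicitly. Concretely, I would isolate two facts: (i) the one-dimensional Ising model with no external field has strong spatial mixing at every inverse-temperature $\beta\in[0,\infty)$, and (ii) for this model $\lambda_\infty = 1-\tanh(2\beta)$. Once both are in hand, Theorem~\ref{mainthm-Zd} gives cutoff at $(d/2\lambda_\infty)\log n$, and substituting $d=1$ and $\lambda_\infty = 1-\tanh(2\beta)$ produces exactly the claimed location $\tfrac12(1-\tanh(2\beta))^{-1}\log n$.

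For (i), I would recall that the one-dimensional Ising chain has no phase transition: for any finite interval with any boundary condition the Gibbs measure is a (spatial) Markov chain, and the transfer-matrix representation shows that the influence of a boundary spin on a site at distance $k$ decays like $(\tanh\beta)^{k}$, uniformly over the configuration on the rest of the boundary. That is precisely the exponentially small boundary-influence condition defining strong spatial mixing in the sense of Martinelli--Olivieri \cite{MO}, and it holds for \emph{all} $\beta\ge 0$ when $h=0$; this is exactly the assertion recalled in the paragraph preceding the statement. (Equivalently, one may invoke that $O(\log n)$ mixing of Glauber dynamics on $\Z/n\Z$ holds at all $\beta$, which by the equivalences established in \cite{MO} already amounts to strong spatial mixing.)

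For (ii), I would start from Glauber's original computation that the one-site magnetizations $m_x(t)=\E[\sigma_x(t)]$ solve the linear system $\tfrac{d}{dt}m_x = -m_x + \tfrac12\tanh(2\beta)\,(m_{x-1}+m_{x+1})$, whose slowest mode relaxes at rate $1-\tanh(2\beta)$; the standard fact (cf.\ \cite{LPW}) is that this is in fact the full spectral gap of the continuous-time Glauber generator on $\Z/n\Z$, and that this value $1-\tanh(2\beta)$ is independent of $n$. Since $\lambda_\infty$ is, by definition, the spectral gap of the dynamics on the infinite lattice, realized as the limit (equivalently, the infimum) of the finite-volume gaps along growing tori, it inherits this common value, so $\lambda_\infty = 1-\tanh(2\beta)$.

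There is no hard step here: all the substance lives in Theorem~\ref{mainthm-Zd}, and the corollary is a bookkeeping exercise. If anything deserves a moment's care it is the identification in (ii), where one must make sure that the quantity appearing in Theorem~\ref{mainthm-Zd} is genuinely the infinite-volume gap and not merely the finite-volume ones — but in one dimension this is immediate, since all of these gaps coincide with $1-\tanh(2\beta)$.
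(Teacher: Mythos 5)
Your proposal matches the paper's own derivation: the corollary is obtained by specializing Theorem~\ref{mainthm-Zd} to $d=1$, $h=0$, using the known facts that strong spatial mixing holds at all $\beta$ in one dimension and that the spectral gap equals $1-\tanh(2\beta)$ independently of the system size (cf.\ \cite{LPW}), so that $\lambda_\infty=1-\tanh(2\beta)$. Your extra remarks on the transfer-matrix decay and Glauber's magnetization equations merely flesh out these cited facts, and the argument is correct.
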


Our proofs determine the cutoff location in terms of $\lambda(r)$, the spectral gap of the dynamics on the $d$-dimensional lattice $(\Z/r\Z)^d$
for a certain $r=r(n)$. As a biproduct, we are able to relate the spectral gap on tori of varying sizes and obtain that they converge polynomially fast
to the spectral gap of the dynamics on the infinite-volume lattice.
\begin{maintheorem}\label{mainthm-spectral}
For $d\geq 1$ let $\lambda(n)$ be the spectral gap of the continuous-time Glauber dynamics for the Ising model on $(\Z/n\Z)^d$ with
inverse-temperature $\beta \geq 0$ and external field $h$.
If there is strong spatial mixing for $\beta,h$ then
\[\left|\lambda(n) - \lambda_\infty\right| \leq n^{-1/2+o(1)}\,,\]
where $\lambda_\infty$ is the spectral gap of the dynamics on the infinite-volume lattice.
\end{maintheorem}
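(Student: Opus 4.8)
The plan is to prove the two one‑sided estimates $\lambda(n)\le\lambda_\infty+n^{-1/2+o(1)}$ and $\lambda(n)\ge\lambda_\infty-n^{-1/2+o(1)}$ separately, working throughout from the variational characterization $\lambda(n)=\inf_f \mathcal E_n(f,f)/\var_n(f)$ (infimum over nonconstant $f$ on $(\Z/n\Z)^d$, where $\mathcal E_n$ is the Dirichlet form of the torus dynamics) and its infinite‑volume analogue $\lambda_\infty=\inf_f \mathcal E_\infty(f,f)/\var_\infty(f)$; by density of local functions in $L^2(\pi_\infty)$ and continuity of $\mathcal E_\infty,\var_\infty$, the latter infimum may be restricted to \emph{local} functions. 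The only input beyond these formulas is strong spatial mixing in its quantitative form: the Gibbs measure seen through a finite window is insensitive, up to an error exponentially small in the distance to the complement of the window, to the configuration (or boundary condition) outside; in particular covariances of local functions with disjoint supports decay exponentially in the separation.

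For the upper bound, fix $\delta>0$ and choose a local test function $f$, supported in a box $B$ of side $\ell$, with $\mathcal E_\infty(f,f)/\var_\infty(f)\le\lambda_\infty+\delta$; the side $\ell=\ell(\delta)$ that suffices is governed by the (SSM‑controlled) rate at which finite‑box Gibbs measures approximate $\pi_\infty$. Embed $B$ into the torus once $n\ge 2\ell$. Since $f$ is supported in $B$, both $\var_n(f)$ and $\mathcal E_n(f,f)$ are determined by the torus Gibbs measure restricted to a bounded neighborhood of $B$, which by SSM differs from $\pi_\infty$ there by at most $e^{-\Theta(n-\ell)}$; hence $\lambda(n)\le \mathcal E_n(f,f)/\var_n(f)\le\lambda_\infty+\delta+e^{-\Theta(n-\ell)}$. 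Taking $\ell$ to be a suitable power of $n$ (so that $\ell=o(n)$ while $\delta=\delta(\ell)$ is at most $n^{-1/2+o(1)}$) gives the claimed bound; this direction is the less delicate one, and the exponent $1/2$ here has slack.

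The lower bound is the crux. Let $f$ be near‑optimal on the torus, normalized so $\var_n(f)=1$ and $\mathcal E_n(f,f)=\lambda(n)+o(1)$; we must show $\mathcal E_n(f,f)\ge\lambda_\infty-n^{-1/2+o(1)}$. The strategy is a two‑scale block comparison in the spirit of the recursive analysis of Martinelli--Olivieri and the quasi‑factorization estimates of Cesi: fix a mesoscopic scale $\ell$, cover the torus by translates $\{B_i\}$ of a box of side $\ell$ arranged so that (exploiting the exact translation symmetry afforded by periodic boundary conditions) every edge lies in the same number of blocks, and inside each block use the spectral gap with the ambient configuration as boundary condition, which by SSM and the box‑analogue of the upper bound above is at least $\lambda_\infty-\mathrm{err}(\ell)$, to get $\E_n[\var(f\mid \cF_{B_i^c})]\le (\lambda_\infty-\mathrm{err}(\ell))^{-1}\,\E_n[\mathcal E_{B_i}(f,f)]$, where $\mathcal E_{B_i}$ collects the terms of $\mathcal E_n$ from sites in $B_i$. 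One then needs a quasi‑factorization of the torus variance, $\var_n(f)\le (1+\varepsilon(\ell,n))\cdot(\text{normalization})\cdot\sum_i \E_n[\var(f\mid\cF_{B_i^c})]$, with the error $\varepsilon(\ell,n)$ --- produced by the overlaps of the cover and by correlations between well‑separated blocks --- controlled via SSM. Since $\sum_i\mathcal E_{B_i}(f,f)$ equals a fixed multiple of $\mathcal E_n(f,f)$ and the multiplicities are uniform, this yields $1=\var_n(f)\le (1+\varepsilon(\ell,n))(\lambda_\infty-\mathrm{err}(\ell))^{-1}\mathcal E_n(f,f)$, hence $\lambda(n)\ge\lambda_\infty-n^{-1/2+o(1)}$ after optimizing $\ell$ against $n$.

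The main obstacle is not obtaining \emph{some} comparison --- that $\lambda(n)\ge c\,\lambda_\infty$ for an absolute $c\in(0,1)$ is essentially classical from SSM --- but upgrading the constant to $1+o(1)$ with an explicit polynomial rate. This forces two things: first, $\mathrm{err}(\ell)=o(1)$, i.e. a box version of the theorem showing the finite‑volume gap with \emph{worst‑case} boundary conditions is within $o(1)$ of $\lambda_\infty$; and second, organizing the cover and the quasi‑factorization so that the losses are purely additive and governed by the exponential SSM rate weighed against the surface‑to‑volume ratio of the blocks --- balancing these two competing scales is exactly what produces the exponent $1/2$. Finally, this estimate dovetails with the cutoff results: once $|\lambda(r(n))-\lambda_\infty|=o(\log\log n/\log n)$ for the mesoscopic scale $r(n)$ appearing in the proof of Theorem~\ref{mainthm-Zd}, the cutoff location $(d/2\lambda(r(n)))\log n$ determined there may be rewritten as $(d/2\lambda_\infty)\log n$ within the $O(\log\log n)$ window.
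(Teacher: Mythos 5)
The crux of the theorem is the lower bound $\lambda(n)\ge\lambda_\infty-n^{-1/2+o(1)}$, and your outline does not supply the ingredients it needs. You explicitly require (a) a ``box version'' of the theorem: that the gap of an $\ell$-box under \emph{worst-case} boundary conditions is $\lambda_\infty-o(1)$. This is essentially a stronger form of the statement being proved, it is nowhere established, and it is not obviously true: boundary conditions genuinely affect the relevant relaxation rates under strong spatial mixing (compare the appearance of the halfplane gap in Theorem~\ref{mainthm-plusBC2d}), so block gaps with the ambient torus configuration as boundary condition cannot simply be asserted to converge to $\lambda_\infty$. You also require (b) a quasi-factorization of the torus variance over an overlapping cover with constant $1+\varepsilon(\ell,n)$, $\varepsilon\to 0$. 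The Martinelli--Olivieri/Cesi machinery you invoke gives uniform positivity of the gap, i.e.\ a comparison losing a constant factor; upgrading the loss to an additive $o(1)$ is exactly the hard analytic content, and no argument is offered. The upper bound as you state it also fails to deliver the claimed rate: SSM controls the discrepancy between the torus measure and $\mu_\infty$ on the window, but the quantity $\delta(\ell)$ --- how close the best Rayleigh quotient among functions supported in an $\ell$-box comes to $\lambda_\infty$ --- admits no a priori quantitative (let alone polynomial) rate in $\ell$, so ``taking $\ell$ a suitable power of $n$'' only yields $\limsup_n\lambda(n)\le\lambda_\infty$, not $\lambda(n)\le\lambda_\infty+n^{-1/2+o(1)}$.

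The paper's proof avoids both obstacles by never comparing Dirichlet forms at scale $n$. Theorem~\ref{t:cutoffLocation} pins the mixing time on $\Z_n^d$ at $(d/2\lambda(r))\log n\pm O(\log\log n)$ for \emph{any} admissible mesoscopic scale $r\in[\log^{2+\delta}n,\;\mathrm{polylog}\,n]$; since the mixing time is a single number, the gaps at two different scales $r_1,r_2$ must agree to within $O(\log\log n/\log n)=r_1^{-1/2+o(1)}$ (Lemma~\ref{l:spectralGapConvergenceRate}). A telescoping argument then produces a limit $\hat\lambda$ with the polynomial rate, and every sufficiently large side length arises as such a mesoscopic scale for some $n$, which gives the stated bound for $\lambda(n)$ itself. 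The identification $\hat\lambda=\lambda_\infty$ is done only qualitatively (Holley-type decay of the magnetization via Claim~\ref{cl:magnetizationDecay}, plus local test functions and convergence of $\mu_r$ to $\mu_\infty$ on finite windows), which is where your test-function comparison legitimately enters --- but there no rate is needed. Without this mixing-time consistency idea, or a genuinely new $1+o(1)$ block comparison, your proposal does not close.
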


\subsection{Background and previous work}\label{sec:intro-previous}
While our results hold in greater generality, we will focus on single-site uniform interactions for the sake of the exposition:
The \emph{Ising model} on a finite graph with vertex-set $V$ and edge-set $E$
is defined as follows. Its set of possible configurations is $\Omega=\{\pm1\}^V$, where each configuration
corresponds to an assignment of plus/minus spins to the sites in $V$. The probability that the system is in a
configuration $\sigma \in \Omega$ is given by the Gibbs distribution
\begin{equation}
  \label{eq-Ising}
  \mu(\sigma)  = \frac1{Z(\beta)} \exp\left(\beta \sum_{uv\in E} \sigma(u)\sigma(v) + h \sum_{u \in V} \sigma(u)\right) \,,
\end{equation}

where the partition function $Z(\beta)$ is a normalizing constant.
The parameters $\beta$ and $h$ are the inverse-temperature and external field respectively; for $\beta \geq 0$ we say that the
model is ferromagnetic, otherwise it is anti-ferromagnetic.
These definitions extend to infinite locally finite graphs (see e.g.\ \cites{Liggett,Martinelli97}).

We denote the boundary of a set $\Lambda\subset V$ as the neighboring sites of $\Lambda$ in $V\setminus\Lambda$
and call $\tau\in\{\pm1\}^{\partial \Lambda}$ a \emph{boundary condition}. A periodic boundary condition on $(\Z/n\Z)^d$
corresponds to a $d$-dimensional torus of side-length $n$.

The \emph{Glauber dynamics} for the Ising model is a family of continuous-time Markov chains on the state space $\Omega$,
reversible with respect to the Gibbs distribution, given by the generator
\begin{equation}
  \label{eq-Glauber-gen}
  (\mathcal{L}f)(\sigma)=\sum_{x\in V} c(x,\sigma) \left(f(\sigma^x)-f(\sigma)\right)
\end{equation}
where $\sigma^x$ is the configuration $\sigma$ with the spin at $x$ flipped.
The transition rates $c(x,\sigma)$ are chosen to satisfy finite range interactions, detailed balance, positivity and boundedness and translation invariance (see Section~\ref{sec:prelim}).
Two notable examples for the transition rates are
\begin{enumerate}[(i)]
\item \emph{Metropolis}: $  c(x,\sigma) = \exp\Big(2h\sigma(x)+2\beta\sigma(x)\sum_{y \sim x}\sigma(y)\Big)  \;\wedge\; 1\; $.
\item \emph{Heat-bath}:   $\;c(x,\sigma) = \bigg[1+ \exp\Big(-2h\sigma(x)-2\beta\sigma(x)\sum_{y \sim x}\sigma(y)\Big)\bigg]^{-1}\;$.
\end{enumerate}
These chains have useful graphical interpretations: for instance, heat-bath Glauber dynamics is equivalent to
updating the spins via i.i.d.\ rate-one Poisson clocks, each time resetting a spin according to the conditional distribution given its neighbors.

Ever since its introduction in 1925, the static properties of the Ising model and its Gibbs states,
and more recently the Glauber dynamics for this model, have been the focus of intensive research.
A series of breakthrough papers by Aizenman, Dobrushin, Holley, Shlosman, Stroock et al.\
(cf., e.g., \cites{AH,DobShl,Holley,HoSt1,HoSt2,Liggett,LY,MO,MO2,MOS,SZ1,SZ2,SZ3,Zee1,Zee2}) starting from the
late 1970's has developed the theory of the convergence rate of the Glauber dynamics to stationarity.
It was shown by Aizenman and Holley~\cite{AH} that the spectral gap of the dynamics on the infinite-volume lattice
is uniformly bounded whenever the Dobrushin-Shlosman uniqueness condition holds. Stroock and Zegarli{\'n}ski \cites{Zee1,SZ1,SZ3}
 proved that the logartihmic-Sobolev constant is uniformly bounded provided given the Dobrushin-Shlosman mixing conditions (complete analyticity).
Finally, Martinelli and Olivieri \cites{MO,MO2} obtained this for cubes under the more general condition of strong spatial mixing.
This in particular established $O(\log n)$ mixing throughout the uniqueness regime in two-dimensions.
See the excellent surveys \cites{Martinelli97,Martinelli04} for further details.

To conclude this collection of seminal papers that altogether established $O(\log n)$ mixing throughout the regime of strong spatial mixing,
it remains to pinpoint the asymptotics of the mixing time and determine whether or not there is cutoff in this regime.

The cutoff phenomenon was first identified for random transpositions on the symmetric group in \cite{DiSh},
and for the riffle-shuffle and random walks on the hypercube in \cite{Aldous}. The term ``cutoff''
was coined by Aldous and Diaconis in~\cite{AD}, where cutoff was shown for the top-in-at-random card shuffling process.
See \cites{Diaconis,CS,SaloffCoste2} and the references therein for more on the cutoff phenomenon.
In these examples, and most others where cutoff has been rigorously shown, the stationary distribution has
many symmetries or is essentially one-dimensional (e.g.\ uniform on the symmetric group~\cite{DiSh},
uniform on the hypercube~\cite{Aldous} and one-dimensional birth-and-death chains~\cite{DLP2}).
Even for random walks on random regular graphs (where the stationary distribution is uniform), cutoff was only recently
verified \cite{LS2}.

In the context of spin systems, cutoff was conjectured by Peres (see \cite{LLP}*{Conjecture~1}) for the Glauber dynamics
on any sequence of transitive graphs on $n$ vertices where its mixing time is $O(\log n)$.
More specifically, it was conjectured in \cite{LLP} that cutoff holds for the Ising model on any $d$-dimensional torus $(\Z/n\Z)^d$
in the high temperature regime; see also \cite{LPW}*{Question 8}, where the special case of $d=1$ (Ising model on the cycle) was emphasized.

However, so far the only spin-system where cutoff has been established at some inverse-temperature $\beta > 0$
is Glauber dynamics for the Ising model on the complete graph \cites{LLP,DLP}. There, the magnetization (sum-of-spins)
is in fact a one-dimensional Markov chain whose mixing and cutoff govern
that of the entire dynamics. While this result motivates the conjecture on cutoff for the Ising model on lattices,
its proof fails to provide insight for the latter setting since the complete graph has no geometry to consider.

\subsection{Cutoff for spin systems on the lattice}
Our proof of cutoff for the Ising model on the $d$-dimensional lattice, as stated in Theorems~\ref{mainthm-Z2},\ref{mainthm-Zd},
in fact applies to a broad class of spin systems (essentially any monotone or anti-monotone system), underlying geometries and boundary conditions.
We demonstrate this by establishing cutoff for the anti-ferromagnetic Ising model and the gas hard-core model (see, e.g., \cite{LPW} for definitions).
\begin{maintheorem}\label{mainthm-antiferro}
The cutoff result given in Theorem~\ref{mainthm-Zd} for the ferromagnetic Ising model also holds for the anti-ferromagnetic Ising model.
\end{maintheorem}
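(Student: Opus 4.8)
The plan is to reduce the anti-ferromagnetic model to the ferromagnetic one by a partial spin-flip, and, where that reduction is unavailable, to verify that the machinery behind Theorem~\ref{mainthm-Zd} uses only that the dynamics is \emph{anti-monotone} rather than genuinely monotone. First I would treat the case where $(\Z/n\Z)^d$ is bipartite, i.e.\ $n$ even, with bipartition $V=A\cup B$. Let $\Phi\colon\{\pm1\}^V\to\{\pm1\}^V$ be the involution flipping the spins on $B$ while fixing those on $A$. Since every edge of the torus joins $A$ to $B$, $\Phi$ turns $\beta\sum_{uv\in E}\sigma(u)\sigma(v)$ into $-\beta\sum_{uv\in E}\sigma(u)\sigma(v)$, so it conjugates the anti-ferromagnetic Gibbs measure \eqref{eq-Ising} at inverse-temperature $\beta<0$ and external field $h$ to the ferromagnetic one at $|\beta|$ with the site-dependent field $h$ on $A$ and $-h$ on $B$, and conjugates the Glauber generator \eqref{eq-Glauber-gen} accordingly. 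As $\Phi$ is a bijection of the state space, it maps the heat-kernel of one chain to that of the other and preserves total-variation distance; it also preserves strong spatial mixing and (applied on $\Z^d$) the infinite-volume spectral gap $\lambda_\infty$. Hence cutoff for the anti-ferromagnetic dynamics at $(d/2\lambda_\infty)\log n$ with window $O(\log\log n)$ follows from the version of Theorem~\ref{mainthm-Zd} for ferromagnetic Ising with a site-dependent field, whose proof is identical and which collapses to the stated one when $h=0$.

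For general $n$, and to give a uniform argument, I would instead revisit the proof of Theorem~\ref{mainthm-Zd} and isolate the structural inputs it uses: (a) a monotone grand coupling for some partial order $\preceq$ on $\{\pm1\}^V$, so that the worst-case initial law is one of the two $\preceq$-extremal configurations and the Peres--Winkler censoring inequalities apply; (b) strong spatial mixing, supplying logarithmic-Sobolev and block-factorization estimates on boxes of bounded side-length; and (c) the identification of the cutoff location through the relaxation, at rate $\lambda_\infty$, of a macroscopic distinguishing statistic. For the anti-ferromagnetic model on a bipartite box one takes $\preceq$ to be the coordinatewise order with the orientation reversed on $B$; the update rule is then $\preceq$-monotone, the two extremal states are the checkerboard ground states, and the distinguishing statistic is the staggered magnetization $\sum_{x\in A}\sigma(x)-\sum_{x\in B}\sigma(x)$, which relaxes at rate $\lambda_\infty$ exactly as the ordinary magnetization does in the ferromagnetic case. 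Every box used in the argument has bounded side-length and is therefore bipartite, so (a)--(c) go through verbatim; the $L^1\!\to\!L^2$ transfer plus the log-Sobolev input gives the upper bound on $\tmix(\epsilon)$, and tracking the staggered magnetization gives the matching lower bound, yielding cutoff at $(d/2\lambda_\infty)\log n$ with window $O(\log\log n)$.

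The step I expect to be most delicate is the non-bipartite torus ($n$ odd), where the global sublattice structure — and thus both the spin-flip $\Phi$ and the global order $\preceq$ — is unavailable, and the anti-ferromagnetic model is frustrated. The way around this is that monotonicity is invoked only (i) on bounded boxes, which remain bipartite, and (ii) through the extremal initial conditions for the full-torus chain; the latter can be replaced by the two near-extremal checkerboard configurations, the discrepancy created along the single unavoidable seam of the odd torus costing only an $O(\log\log n)$ correction that is absorbed into the cutoff window. Making this robustness quantitative — showing that the seam neither changes the $O(\log n)$ order of mixing nor shifts the cutoff location — is where the real work lies; alternatively one may simply state the result along the subsequence of even $n$, for which the clean reduction of the first paragraph already gives everything.
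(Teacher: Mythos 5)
There is a genuine gap, and it is exactly at the point you flag as ``where the real work lies'': the case of odd $n$. The paper's proof does not need any global monotone structure on the full torus $(\Z/n\Z)^d$ at all, so the parity of $n$ is irrelevant. Monotonicity of the Ising dynamics enters the argument in precisely two places: in Lemma~\ref{lem-sparse-prob}, where the all-plus and all-minus chains are coalesced on the barrier-dynamics blocks $B^+$ (tori of side length $\approx\log^2 n$) to show the update support is sparse, and in \eqref{e:disagreementTVBound} of Claim~\ref{cl:magnetizationDecay}, where the monotone coupling is used on the auxiliary torus of side length $r=3\log^3 n$. Both uses involve only the small tori whose side lengths are free parameters of the construction (anything polylogarithmic above $\log^{2+\epsilon}n$ works), so one simply chooses them \emph{even}; on an even torus the standard order reversal on one bipartition class turns the anti-ferromagnetic (anti-monotone) dynamics into a monotone one, and the entire proof of Theorem~\ref{mainthm-Zd} goes through verbatim. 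In particular, the lower bound on the $L^1$ distance comes from the product-chain/CLT argument in Part~2 of Theorem~\ref{thm-l1-l2}, applied to arbitrary starting states, not from tracking a distinguishing statistic from an extremal initial condition; and no censoring or worst-case-extremal-start argument is used anywhere. Your diagnosis that the full-torus chain needs $\preceq$-extremal (checkerboard) initial states, and your proposed fix of controlling the discrepancy along the ``seam'' of an odd torus, addresses a need the proof does not have, and as you yourself note it is left unquantified; retreating to even $n$ does not prove the stated theorem.

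Your even-$n$ reduction in the first paragraph is sound in spirit but also not free: flipping the spins on one sublattice maps the anti-ferromagnet with field $h$ to a ferromagnet with a \emph{staggered} field, so you are invoking a version of Theorem~\ref{mainthm-Zd} for non-uniform fields that is not what is stated (it is fine for $h=0$, and the machinery does tolerate it, but ``whose proof is identical'' is an assertion you would still have to check, including that strong spatial mixing transports under the conjugation). The cleaner route, and the one the paper takes, is to bypass any global transformation: verify that the only two monotonicity inputs live on small tori of adjustable (hence even) side length, where the anti-monotone-to-monotone order reversal applies, and that the log-Sobolev input from strong spatial mixing (Theorem~\ref{thm-log-sobolev-torus}) holds for the anti-ferromagnetic model as well.
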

\begin{maintheorem}\label{mainthm-hardcore-Zd}
Let $d\geq 1$. The following holds for the Glauber dynamics for the gas hard-core model on $(\Z/n\Z)^d$ with
fugacity $\beta$ and periodic boundary conditions. If $\beta$ is such that there is strong spatial mixing, then the dynamics exhibits cutoff at $(d/2\lambda_\infty) \log n$ with window of $O(\log\log n)$, where $\lambda_\infty$ is the spectral gap of the dynamics on the
infinite volume lattice.
\end{maintheorem}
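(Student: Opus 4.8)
The plan is to re-run, essentially verbatim, the argument that establishes Theorem~\ref{mainthm-Zd} for the ferromagnetic Ising model, since that proof uses the model only through three structural inputs that are equally available for the gas hard-core model in the strong-spatial-mixing regime: (i) a monotone grand coupling of the heat-bath Glauber dynamics, (ii) a uniformly bounded logarithmic-Sobolev constant on cubes, and (iii) exponential decay of truncated two-point correlations. For (i), on the torus $(\Z/n\Z)^d$ with $n$ even the lattice is bipartite, and ordering $\{0,1\}^V$ so that occupancy is reversed on the odd sublattice turns the update rule into a monotone one; this yields a grand coupling and extremal states $\boldsymbol{\top}$ (all even sites occupied, hence all odd sites empty) and $\boldsymbol{\bot}$ (all odd sites occupied). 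For (ii)--(iii) we use that in the strong-spatial-mixing regime the hard-core Gibbs measure enjoys the same Martinelli--Olivieri-type equivalences between spatial mixing, $O(\log n)$ relaxation and uniform log-Sobolev \cite{MO,MOS}, together with the FKG inequality for the hard-core model.

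\textbf{Upper bound.} First perform a burn-in of $O(\log\log n)$ steps from $\boldsymbol{\top}$ and $\boldsymbol{\bot}$: using the grand coupling and strong spatial mixing, with high probability the set of sites on which the two coupled chains disagree becomes sparse, so it suffices to bound $\tv$-mixing from the resulting configurations. Tile the torus into blocks of side $r=r(n)=n^{o(1)}$; within a block the log-Sobolev inequality forces $L^2$-equilibration in time $O(\log r)=o(\log n)$, and the block spectral gap is within $n^{-1/2+o(1)}$ of $\lambda_\infty$ (cf.\ Theorem~\ref{mainthm-spectral}). The only slow mode left is the global one: let $\phi$ be the local near-eigenfunction of the infinite-volume generator with eigenvalue $\lambda_\infty$ (a local function of the occupation variables, centered under $\mu$), and set $S=\sum_x \phi_x$ for its translates; the $L^2$-relaxation of $S$ from the post-burn-in configurations takes time $(1/2\lambda_\infty)\log(n^d)+O(\log\log n)=(d/2\lambda_\infty)\log n+O(\log\log n)$. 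The technique translating $L^1$-mixing into $L^2$-mixing of such projections then upgrades equilibration of the block observables and of $S$ to $\tv$-mixing at the stated time.

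\textbf{Lower bound.} Run from $\boldsymbol{\top}$ and track $S_t=\sum_x \phi_x(\sigma_t)$. Since $\phi$ is, up to an $o(1)$ error, the top eigenfunction, $\E[S_t]=(1+o(1))e^{-\lambda_\infty t}\E[S_0]$ with $|\E[S_0]|=\Theta(n^d)$ because $\boldsymbol{\top}$ has nontrivial overlap with the slow mode; meanwhile exponential decay of truncated correlations (strong spatial mixing, and FKG persisting for the time-$t$ law) gives $\var(S_t)=O(n^d)$ uniformly in $t$. Hence
\[
\frac{\bigl|\E[S_t]-\E_\mu S\bigr|}{\sd_\mu(S)}\ \gtrsim\ n^{d/2}e^{-\lambda_\infty t},
\]
which diverges for $t\le (d/2\lambda_\infty-\epsilon)\log n$; Chebyshev then shows the chain is not $\tv$-mixed before $(d/2\lambda_\infty)\log n$, and tracking the constants yields a window of $O(\log\log n)$.

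\textbf{Main obstacle.} The delicate part, exactly as in Theorem~\ref{mainthm-Zd}, is the $L^1\!\to\!L^2$-for-projections reduction glued to the block decomposition: one must show that after the burn-in the law of the chain is, in $\tv$, a small perturbation of a product over blocks, so that the whole distance is governed by the $L^2$-relaxation of the finitely many global observables. The additional, purely structural, friction specific to the hard-core model is to carry the twisted partial order through the torus cleanly (in particular the care needed at the level of the parity of $n$, handled e.g.\ by passing to the natural even-$n$ subsequence) and to verify that strong spatial mixing for the hard-core model really does supply the log-Sobolev and correlation-decay estimates that the Ising argument invokes as black boxes.
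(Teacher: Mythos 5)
Your overall strategy — rerun the ferromagnetic argument, use the uniform log-Sobolev bound under strong spatial mixing, and convert the anti-monotone hard-core dynamics into a monotone one via the bipartite ``twisted'' order — is indeed the paper's strategy, but your handling of the parity issue is a genuine gap. Passing to the even-$n$ subsequence does not prove the theorem as stated for $(\Z/n\Z)^d$ with arbitrary $n$, and for odd $n$ the torus is not bipartite, so the twisted order $\preceq^*$, the extremal states you call $\top$ and $\bot$, and the global grand coupling that your burn-in and your lower bound both start from simply do not exist on the full torus. The paper's resolution is different and is the whole point of its short proof: monotonicity is invoked only in two places, namely in Lemma~\ref{lem-sparse-prob} (coalescence from the two extremal states inside the periodic blocks of the barrier dynamics, to show the update support is sparse) and in \eqref{e:disagreementTVBound} of Claim~\ref{cl:magnetizationDecay} (disagreements under the monotone coupling on the auxiliary torus), i.e.\ only on small tori of side $r=r(n)$ as in \eqref{eq-mt-def}, never on $(\Z/n\Z)^d$ itself. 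Since $r$ may be chosen anywhere in a poly-logarithmic range, one simply takes $r$ even, and the conclusion holds for every $n$ without any bipartiteness assumption on the big torus.

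Separately, the engine you describe for the Ising proof is not the one in the paper, and its key steps are unsubstantiated, so ``rerunning it verbatim'' does not yield a proof. There is no construction of a \emph{local} near-eigenfunction $\phi$ with eigenvalue within $O(\log\log n/\log n)$ of $\lambda_\infty$ (the precision needed even to locate cutoff, let alone get an $O(\log\log n)$ window); a local function with Rayleigh quotient $\lambda_\infty+\epsilon$ does not satisfy $\E[S_t]=(1+o(1))e^{-\lambda_\infty t}\E[S_0]$ from a deterministic start, the claimed $\Theta(n^d)$ overlap of the extremal state with the slow mode is asserted rather than proved, and $\var(S_t)=O(n^d)$ uniformly in $t$ out of equilibrium does not follow from FKG plus spatial mixing of the stationary measure. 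On the upper-bound side, equilibration of block observables together with one global observable $S$ does not control the total-variation distance of the full law. The paper instead reduces the TV distance, via the sparse update support and the barrier dynamics, to a product chain on small tori, bounds it by $\frac12(\exp((n/\log^5 n)^d\,\ltwo_t^2)-1)^{1/2}$ using Cauchy--Schwarz and log-Sobolev, and obtains the lower bound from a CLT for the product of likelihood ratios over $\asymp(n/\log^3 n)^d$ disjoint boxes (Theorem~\ref{thm-l1-l2}); the cutoff location then comes from two-sided estimates on $\ltwo_t$ and the convergence $\lambda(r)\to\lambda_\infty$. These are the components your sketch would have to reconstruct; as written, the proposal does not supply them.
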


Our methods also establish cutoff on more general spin-systems such as the Potts model,
provided that the temperature is sufficiently high.  This is discussed further in the companion paper~\cite{LS1}.

Furthermore, our results are also not confined to the underlying geometry of the cubic lattice and in fact
Theorem~\ref{mainthm-Zd} holds for any non-amenable translation invariant lattice (e.g.\ triangular/hexagonal/ladder lattice etc.).

Similarly, the choice of periodic boundary conditions is not a prerequisite for establishing cutoff, though it does enables us to
determine the mixing time in terms of the infinite-volume spectral gap.  For \emph{arbitrary}
boundary conditions we can still establish the existence of cutoff
and in various special cases (e.g.\ under the all-plus boundary conditions)
we can pinpoint its location as stated by the following results of the companion paper \cite{LS1}.
\begin{maintheorem}[\cite{LS1}]\label{mainthm-bc}
Let $d\geq 1$ and consider the Glauber dynamics for the Ising model on $(\Z/n\Z)^d$ with
arbitrary boundary conditions. If there is strong spatial mixing then the dynamics exhibits cutoff.
\end{maintheorem}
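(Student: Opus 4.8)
The plan is to run the same architecture developed for Theorem~\ref{mainthm-Zd} --- the $L^1\to L^2$ reduction feeding into a logarithmic-Sobolev inequality --- but to replace every appeal to translation invariance by a purely local estimate, so that the periodic boundary condition may be swapped for an arbitrary $\tau$. The one place where the periodic condition is genuinely used in Theorem~\ref{mainthm-Zd} is to pin the cutoff location at $(d/2\lambda_\infty)\log n$; lacking that symmetry we cannot identify the location and so instead define the cutoff point intrinsically. Let $X^+_t,X^-_t$ be the dynamics under $\tau$ from the all-plus and all-minus configurations, run in the monotone grand coupling, and let $D_t$ be their disagreement set; put
\[
t_n^\star \;=\; \inf\Bigl\{\,t\ge 0:\ \E\,|D_t|\ \le\ 1\,\Bigr\}.
\]
Monotonicity gives $d_\tv(t)\le\P(D_t\neq\emptyset)$ from the all-plus (equivalently all-minus) start, and $t_n^\star=\Theta(\log n)$ follows from the uniform spectral-gap and log-Sobolev bounds available throughout the strong spatial mixing regime --- the latter, by Martinelli--Olivieri, holds for boxes with \emph{arbitrary} boundary conditions. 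The target is cutoff at $t_n^\star$ with window $O(\log\log n)$.

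\emph{Upper bound.} First run for time $t_n^\star$. The key structural claim --- which should go through exactly as in the periodic case, via a recursive second-moment estimate that invokes strong spatial mixing only inside boxes of side $\polylog n$ --- is that with probability $1-o(1)$ the set $D_{t_n^\star}$ lies in a disjoint union of well-separated boxes, each of side at most $\polylog n$. Conditioned on this event, run for an additional time $\gamma\log\log n$. By a form of finite-speed-of-information propagation (exponential tails on the support of the update, a standard consequence of bounded rates), the boxes evolve essentially independently, and on each one the dynamics has log-Sobolev constant bounded below uniformly in $n$ and in $\tau$, hence mixes to within $n^{-10}$ in total variation in time $O(\log\log n)$ for suitable $\gamma$; a union bound over the $\le n^d$ boxes removes the residual disagreement. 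This yields $d_\tv(t_n^\star+\gamma\log\log n)\le\epsilon$.

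\emph{Lower bound.} Exhibit a distinguishing statistic $f=\sum_v a_v\,\one\{\sigma(v)=+1\}$, with weights $a_v$ chosen as in Theorem~\ref{mainthm-Zd} but adapted to $\tau$, supported on the sites that realize the slowest local relaxation rate. Strong spatial mixing gives exponential decay of correlations both under $\mu$ and along the dynamics, so $\var_\mu f$ and $\var(f(X_t))$ are each $O(\sum_v a_v^2)$ while the mean gap $\E_{\sigma_+}f(X_t)-\E_\mu f$ is, up to constants, $\sum_v a_v^2\, e^{-2\rho_v t}$ for the local rates $\rho_v$. Chebyshev then forces $d_\tv(t_n^\star-\gamma\log\log n)\ge 1-\epsilon$ for $\gamma$ large, matching the upper bound and establishing cutoff.

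\emph{Main obstacle.} The delicate point is that with an arbitrary boundary condition the local rates $\rho_v$ are genuinely inhomogeneous --- a site near an all-plus patch of $\tau$, a site near a free patch, and a bulk site all relax at different rates --- so one must show that the effective exponent governing both $\E|D_t|$ and the mean gap of $f$ is regular enough that the residual $\epsilon$-dependence lives only at the $\log\log n$ scale; equivalently, that the number of sites whose local rate is within $o(1)$ of the optimum is $n^{d-o(1)}$. This is where porting the periodic-case estimates requires the most care, alongside checking that the error terms in the recursive sparsity argument are summable uniformly in $\tau$ and that boxes abutting the boundary are treated with the ``with boundary condition'' form of strong spatial mixing rather than the infinite-volume one.
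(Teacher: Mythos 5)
The statement itself is only quoted here from the companion paper \cite{LS1}, so there is no in-paper proof to match against; judged on its own terms, your proposal has a decisive mis-centering that breaks the lower-bound half. Your candidate cutoff time $t_n^\star=\inf\{t:\E|D_t|\le 1\}$ sits at roughly \emph{twice} the true mixing time whenever the bulk dominates. Indeed, for a bulk site $v$ the monotone coupling gives $\P(X_t^+(v)\neq X_t^-(v))=\P(X_t^+(v)=1)-\P(X_t^-(v)=1)\approx \xi_t=\mathrm{e}^{-(1+o(1))\lambda_\infty t}$ (Claim~\ref{cl:magnetizationDecay}), so $\E|D_t|\approx n^d\mathrm{e}^{-(1+o(1))\lambda_\infty t}$ and $t_n^\star=(d/\lambda_\infty+o(1))\log n$; but the total-variation distance is already $o(1)$ at time $(d/2\lambda_\infty+o(1))\log n$ (Theorem~\ref{t:cutoffLocation}), and the same factor-two discrepancy persists for non-periodic boundary conditions in the strong spatial mixing regime. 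Hence at $t_n^\star-\gamma\log\log n$ the distance is $o(1)$, not $1-o(1)$, and no Chebyshev argument can rescue the claimed bound there: the signal-to-noise ratio of your statistic is of order $n^{d/2}\mathrm{e}^{-\lambda t}$, which is $n^{-d/2+o(1)}$ at that time. The factor $2$ you lose is exactly what the $L^1$--$L^2$ reduction captures: the TV threshold occurs where the \emph{aggregated squared} $L^2$-distance ($\approx$ number of weakly dependent blocks times $\mathrm{e}^{-2\lambda t}$) crosses a constant, not where the expected number of coupling disagreements crosses $1$. Your upper bound at $t_n^\star+O(\log\log n)$ is a valid but non-sharp coupling bound; since your two halves are centered at different times, cutoff does not follow.

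Two further points. First, your proposed resolution of the ``main obstacle''---that $n^{d-o(1)}$ sites must relax at a near-optimal rate---is not the right condition and is false in general: by Theorem~\ref{mainthm-plusBC2d}, under the all-plus boundary in $d=2$ the cutoff location is $(\lambda_\infty\wedge 2\lambda_{\mathbbm{H}})^{-1}\log n$, i.e.\ the $\asymp n^{d-1}$ near-boundary sites can govern the cutoff when $2\lambda_{\mathbbm{H}}<\lambda_\infty$. The route consistent with this paper's machinery (and the one taken in \cite{LS1}) is to rerun Theorem~\ref{thm-l1-l2} with local boxes carrying their induced, possibly non-periodic, boundary conditions: the upper bound becomes a product over non-identical blocks, the lower bound a Lindeberg-type CLT for independent but non-i.i.d.\ log-likelihood ratios, and cutoff follows because the sum of blockwise squared $L^2$-distances---a finite sum of exponentially decaying terms whose rates are bounded below via the uniform Martinelli--Olivieri log-Sobolev bound---passes from divergence to vanishing within an $O(\log\log n)$ window, with no need to identify the location. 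Second, your sparsity claim for $D_{t_n^\star}$ rests on an unspecified ``recursive second-moment estimate''; what the reduction actually requires is sparsity of the \emph{update support} over a short window of length $\asymp\log\log n$ (Lemma~\ref{lem-sparse-prob}), a statement of a different nature whose proof uses monotonicity and coalescence inside the barrier blocks, and which does port to arbitrary boundary conditions, unlike your centering.
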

\begin{maintheorem}[\cite{LS1}]\label{mainthm-plusBC2d}
Consider Glauber dynamics for the Ising model on $(\Z/n\Z)^2$ with all-plus boundary condition. If there is strong spatial mixing
 then the dynamics exhibits cutoff at $(\lambda_\infty \,\wedge\, 2\lambda_{\mathbbm{H}})^{-1}\log n$,
where $\lambda_\infty,\lambda_{\mathbbm{H}}$ are the spectral gaps of the dynamics on the
infinite-volume lattice and on the halfplane with all-plus boundary condition respectively.
\end{maintheorem}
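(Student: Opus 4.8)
By Theorem~\ref{mainthm-bc} the existence of cutoff is already available for arbitrary boundary conditions, so the task reduces to pinning down the location: one must show $\tmix(\epsilon)=(1+o(1))(\lambda_\infty\wedge 2\lambda_{\mathbbm{H}})^{-1}\log n$ for every fixed $\epsilon$. The plan is to run the same $L^1$-to-$L^2$ reduction and logarithmic-Sobolev machinery used for Theorems~\ref{mainthm-Z2}--\ref{mainthm-Zd}, but reorganized around a decomposition of the box $\Lambda=\{1,\dots,n\}^2$ into its \emph{bulk} and a \emph{boundary layer}, since the all-plus boundary introduces a second, a priori slower, relaxation scale. Heuristically, under strong spatial mixing the dynamics relaxes essentially like a product chain, so the $L^2$-distance from $\pi$ behaves like $\sum_j e^{-2\lambda_j t}$ over a spectrum made of $\Theta(n^2)$ ``bulk modes'' clustered near $\lambda_\infty$ together with $\Theta(n)$ modes localized along the $1$-dimensional boundary and clustered near the halfplane gap $\lambda_{\mathbbm{H}}$ (plus $O(1)$ corner modes, which are negligible). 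This sum is $O(1)$ precisely when both $n^2 e^{-2\lambda_\infty t}=O(1)$ and $n\,e^{-2\lambda_{\mathbbm{H}} t}=O(1)$, i.e.\ at $t=\big(\lambda_\infty^{-1}\vee(2\lambda_{\mathbbm{H}})^{-1}\big)\log n=(\lambda_\infty\wedge 2\lambda_{\mathbbm{H}})^{-1}\log n$, which is the asserted location; the residual $\log\log n$-order corrections are the usual log-Sobolev burn-in.

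For the \emph{upper bound} I would start from the all-minus configuration (extremal, and by monotonicity the worst start relative to the plus-biased $\pi$) and apply the $L^1$-to-$L^2$ reduction to projections onto small blocks: once a block's environment has equilibrated it relaxes in $O(1)$ time by the uniform logarithmic-Sobolev bound valid under strong spatial mixing, and the slow part is the ``coupon-collector'' accumulation over the $\Theta(n^2)$ interior blocks and the $\Theta(n)$ blocks meeting the boundary. An interior block is compared, via censoring and block-dynamics arguments, to the dynamics on a torus of side $r=r(n)=n^{o(1)}$, whose gap tends to $\lambda_\infty$ by Theorem~\ref{mainthm-spectral}; a boundary block is compared instead to the dynamics in a box of side $r(n)$ carrying the plus condition on one face and free boundary elsewhere, whose gap converges to $\lambda_{\mathbbm{H}}$ by the same strong-spatial-mixing argument. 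Patching these estimates shows that by time $(1+\epsilon)(\lambda_\infty\wedge 2\lambda_{\mathbbm{H}})^{-1}\log n+C_\epsilon\log\log n$ every block projection is within $o(n^{-2})$ of its marginal, hence the chain is $o(1)$-mixed.

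For the \emph{lower bound} I would use a distinguishing statistic assembled from two localized magnetizations evaluated along the chain started from all-minus: the bulk magnetization $S_{\mathrm{bulk}}=\sum_{\dist(x,\partial\Lambda)>n/4}\sigma_t(x)$ and a boundary magnetization $S_{\partial}=\sum_{\dist(x,\partial\Lambda)\le L}\sigma_t(x)$ with a slowly growing $L=L(n)=n^{o(1)}$. Their deviations from equilibrium decay at rates $\lambda_\infty$ and $\lambda_{\mathbbm{H}}$ respectively (the boundary-localized eigenmode overlaps strongly with $S_\partial$ but only negligibly with the deep-bulk sites), while their variances under the dynamics are $O(n^2)$ and $O(nL)$ by quantitative correlation decay; so Chebyshev separates the law of the chain from $\pi$ as long as $n^2 e^{-\lambda_\infty t}\gg n$ or $nL\,e^{-\lambda_{\mathbbm{H}} t}\gg\sqrt{nL}$, i.e.\ for all $t\le(1-\epsilon)(\lambda_\infty\wedge 2\lambda_{\mathbbm{H}})^{-1}\log n$ once $n$ is large, which matches the upper bound.

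The step I expect to be the main obstacle is handling these two length scales with the requisite precision. On the one hand one must establish that the halfplane-with-plus-boundary dynamics genuinely has a positive gap $\lambda_{\mathbbm{H}}$ and that the relaxation of the $O(1)$-thick boundary layer of the finite box is governed by $\lambda_{\mathbbm{H}}$ up to $1+o(1)$ factors --- which requires strong spatial mixing \emph{up to the plus boundary}, a known but delicate consequence of SSM on $\Z^2$ --- together with a transfer argument identifying the $\Theta(n)$ slowest modes as the boundary-localized ones. On the other hand, the lower bound needs the variance of $S_\partial$ controlled uniformly, so the correlations near a plus boundary must be shown to decay fast enough that the $\Theta(nL)$ contributing spins behave like that many weakly dependent variables. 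Finally, verifying that the corner (and, in higher dimensions, lower-dimensional face) contributions are provably of smaller order --- so that only the $\lambda_\infty$ and $\lambda_{\mathbbm{H}}$ terms survive in the limit --- is an additional bookkeeping point, made tractable here by the restriction to $d=2$.
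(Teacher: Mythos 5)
You should first note that this paper does not actually prove Theorem~\ref{mainthm-plusBC2d}: it is quoted from the companion paper \cite{LS1}, so there is no in-paper proof to compare against. Judged against the framework developed here, your high-level plan is the natural one and the arithmetic of your heuristic is right: roughly $n^2$ bulk blocks relaxing at rate $\lambda_\infty$ versus roughly $n$ boundary blocks relaxing at rate $\lambda_{\mathbbm{H}}$ forces mixing at $\big(\lambda_\infty^{-1}\vee(2\lambda_{\mathbbm{H}})^{-1}\big)\log n=(\lambda_\infty\wedge 2\lambda_{\mathbbm{H}})^{-1}\log n$, and invoking Theorem~\ref{mainthm-bc} for the existence of cutoff is legitimate (though it, too, lives in \cite{LS1}).

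As a proof, however, there are genuine gaps. First, the ``$\Theta(n^2)$ bulk modes near $\lambda_\infty$ plus $\Theta(n)$ boundary modes near $\lambda_{\mathbbm{H}}$'' picture is only a heuristic: nothing in the strong spatial mixing hypothesis gives you such a spectral decomposition, and the machinery of this paper deliberately avoids eigenstructure, going instead through update-support sparsity (Lemma~\ref{lem-sparse-prob}) and the product-chain $L^2$ bound of Theorem~\ref{thm-l1-l2}. To make your argument rigorous you would need a two-type version of that reduction, with an upper bound of the shape $\big(\exp(n^2\ltwo_{\mathrm{bulk}}^2+n\,\ltwo_{\partial}^2)-1\big)^{1/2}$, and the sparsity/barrier construction must be redone near the boundary, where periodic barriers and translation invariance are unavailable --- exactly the difficulty the authors flag in Section~\ref{sec:othermodels}. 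Relatedly, your closing step for the upper bound (``every block projection is within $o(n^{-2})$ of its marginal, hence the chain is $o(1)$-mixed'') is not a valid inference about the total-variation distance of the joint law; controlling the dependence between blocks is the whole point of the sparse-support reduction. Second, the two substantive new inputs are asserted rather than proved: (i) that the dynamics on $r(n)$-boxes carrying a plus face has spectral gap and $L^2$-mixing governed by $\lambda_{\mathbbm{H}}$ up to $1+o(1)$ (the analogue of Lemma~\ref{lem-L2-bounds} and of the gap-convergence argument in Lemma~\ref{l:spectralGapConvergenceRate}, which here is only proven for tori and $\lambda_\infty$); and (ii) a halfplane analogue of Claim~\ref{cl:magnetizationDecay} (a Holley-type statement) guaranteeing that the boundary magnetization started from all-minus decays no faster than $\mathrm{e}^{-(1+o(1))\lambda_{\mathbbm{H}}t}$, which your Chebyshev lower bound needs, together with variance control of $S_\partial$ under the non-equilibrium law at time $t$, not just under $\mu$. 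You correctly identify these as the delicate points, but identifying them is not supplying them, and they are precisely the content that makes the plus-boundary location theorem harder than Theorem~\ref{mainthm-Zd}.
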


\begin{figure}
\centering \includegraphics[width=4in]{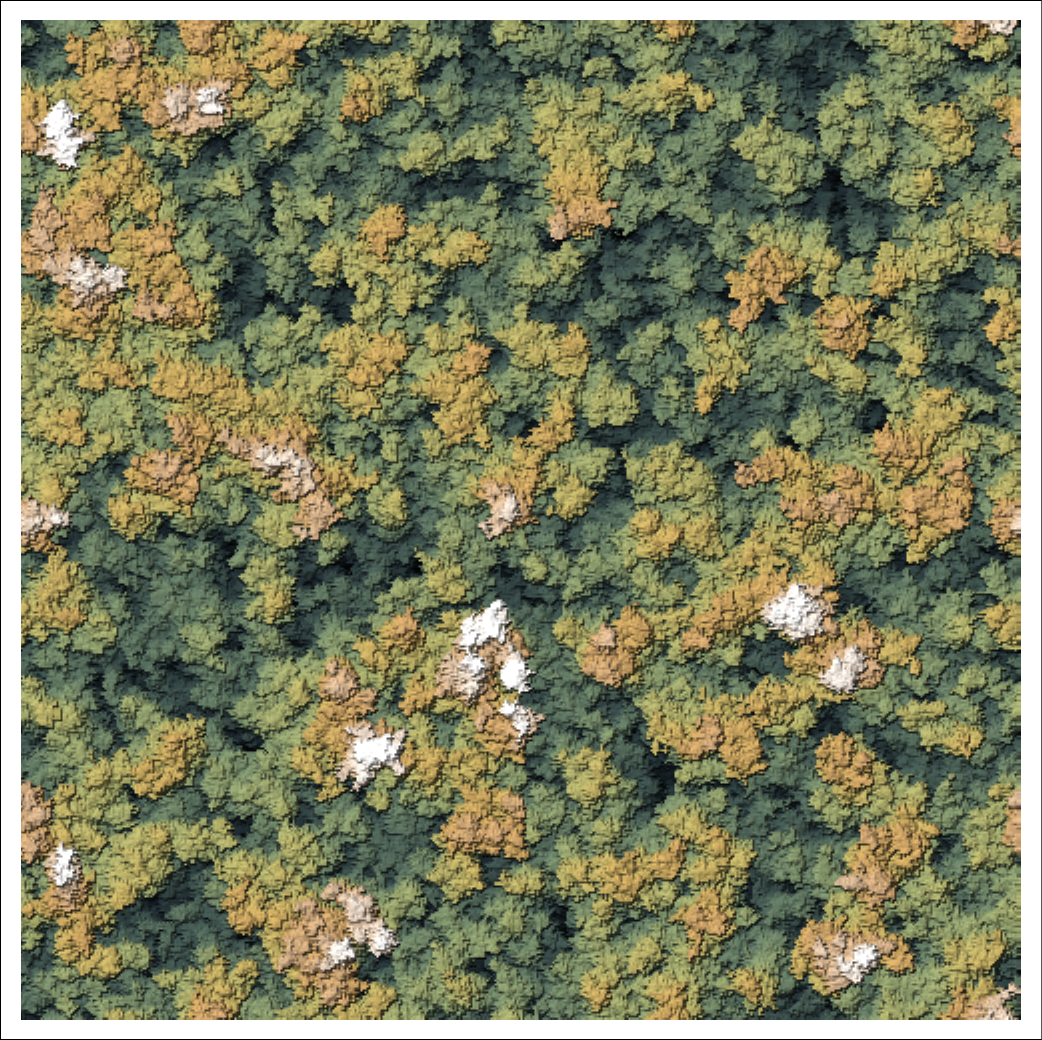}
\caption{Sparse geometry of the update support along a short time interval (mininum subset of sites determining the final configuration, conditioned on the update sequence),
simulated on a $500\times500$ square lattice at $\beta=0.4$.
Color-map highlights the last time a spin belonged to the support (black being earliest and white being latest).}
\label{fig:support}
\end{figure}

\subsection{Main techniques}
To prove total-variation cutoff on the cubic lattice of dimension $d$,
we introduce a technique for bounding the $L^1$-mixing of the dynamics on $(\Z/n\Z)^d$ from below and above in terms of
a quantity that measures the $L^2$-mixing of the dynamics on smaller lattices $(\Z/r\Z)^d$ with $r \asymp \log^3 n$ (see the definition of $\ltwo_t$ in \eqref{eq-mt-def}).

A key ingredient in this technique is an analysis of the geometry of the ``update support'':
the minimum subset of sites whose initial spins, conditioned on the update sequence in a given time period, determine the final configuration.
We show that even for relatively short update sequences, this minimum subset of sites is typically ``sparse'', i.e.\ comprises remote clusters of small diameter.
Namely, the clusters have diameter $O(\log^3 n)$ and the pairwise distances between distinct clusters are all at least of order $\log^2 n$ (see Definition~\ref{def-sparse-set}
of a sparse set).
Fundamentally, this is ensured by the monotonicity of the system and the strong spatial mixing property, as the dynamics on cubes mixes in timescales much smaller than their diameter.
This is illustrated in Figure~\ref{fig:support} where the clusters are shown in white.

We reduce the analysis of the $L^1$-mixing of the dynamics on $(\Z/n\Z)^d$ to its projection onto sparse supports.
For such subsets, the speed of propagation of information between the distant clusters makes the projections onto them essentially independent.
Using some additional arguments, from here we can translate the above to a product chain on smaller tori $(\Z/r\Z)^d$. Finally, the use of
log-Sobolev inequalities provides a tight control over the $L^1$-mixing in terms of the aforementioned $L^2$-quantity, as stated in Theorem~\ref{thm-l1-l2}.

\section{Preliminaries}\label{sec:prelim}
Consider the Ising model on set of spins $V$, let $\mu$ be its Gibbs distribution as given in \eqref{eq-Ising} and $\sigma \in \Omega=\{\pm1\}^V$ be a configuration.
For $\Lambda\subset V$, we will let $\sigma(\Lambda)$ and $\mu_\Lambda$ denote the projections of $\sigma$
and $\mu$ onto $\{\pm1\}^\Lambda$ respectively.
We let $\mu^\tau_\Lambda$ denote the measure on $\Lambda$ given the boundary condition $\tau$, that is,
the conditional measure $\mu_\Lambda(\cdot\mid \sigma_{\partial\Lambda}=\tau)$.
%

\subsection{Glauber dynamics for the Ising model}
The Glauber dynamic for the Ising model on the lattice $V = \Z^d$, whose generator is given in \eqref{eq-Glauber-gen},
accepts any choice of transition rates $c(x,\sigma)$ which satisfy the following:
\begin{enumerate}[(1)]
\item \emph{Finite range interactions}: For some fixed $R>0$ and any $x \in V$, if $\sigma,\sigma' \in \Omega$ agree on the ball of diameter $R$ about $x$ then
$c(x,\sigma)=c(x,\sigma')$.
\item \emph{Detailed balance}: For all $\sigma\in \Omega$ and $x \in V$,
\[ \frac{c(x,\sigma)}{c(x,\sigma^x)} = \exp\Big(2h\sigma(x)+2\beta\sigma(x)\sum_{y \sim x}\sigma(y)\Big)\,.
\]
\item \emph{Positivity and boundedness}: The rates $c(x,\sigma)$ are uniformly bounded from below and above by some fixed $C_1,C_2 > 0$.
\item \emph{Translation invariance}: If $\sigma \equiv \sigma'(\cdot + \ell)$, where $\ell \in V$ and addition is according to the lattice metric,
then $c(x,\sigma) = c(x+\ell,\sigma')$ for all $x \in V$.
\end{enumerate}
The Glauber dyanmics generator with such rates defines a unique Markov process, reversible with respect to the Gibbs measure $\mu_V^\tau$.
For simplicity of the exposition, the proof is given for the cases of heat-bath Glauber dynamics or Metropolis Glauber dynamics.
The results for other choices of transition rates follow with minor adjustments to the arguments.

We will let $\P_{\sigma_0}(\cdot)$ and $\E_{\sigma_0}[\cdot]$ denote the probability and expectation conditioned on the Glauber dynamics having initial configuration $\sigma_0$.

\subsection{Mixing, spectral gap and the logarithmic-Sobolev constant}\label{subsec:prelim-gap-sob}
The $L^1$ (total-variation) distance is perhaps the most fundamental notion of convergence in the theory of Markov chains.
For two probability measures $\nu_1,\nu_2$ on a finite space $\Omega$ the total-variation distance is defined as
\[
\|\nu_1-\nu_2\|_\tv = \max_{A\subset \Omega} |\nu_1(A)-\nu_2(A)| = \frac12\sum_{x\in\Omega} |\nu_1(x)-\nu_2(x)| ~,
\]
i.e.\ half the $L^1$-distance between the two measures.
For an ergodic Markov chain $(Y_t)$ with stationary distribution $\nu$, the
mixing-time notion $\tmix$ is defined
with respect to $\|\P(Y_t \in \cdot) - \mu\|_\tv$.

The spectral gap and log-Sobolev constant of the
continuous-time Glauber dynamics are given by the following Dirichlet form
(see, e.g., \cites{Martinelli97,SaloffCoste}):
\begin{align}\label{eq-dirichlet-form}
\gap = \inf_f \frac{\mathcal{E}(f)}
{\var(f)}~,~\quad \sob = \inf_{f} \frac{\mathcal{E}(f)}{\operatorname{Ent}(f)}~,
\end{align}
where the infimum is over all nonconstant $f\in L^2(\mu)$ and
\begin{align*}
\mathcal{E}(f) &= \left<\mathcal{L} f,f\right>_{L^2(\mu)} = \frac12\sum_{\sigma,x} \mu(\sigma)c(x,\sigma)\left[f(\sigma^x)-f(\sigma)\right]^2\,,\\
\operatorname{Ent}(f) &= \E\left[f^2(\sigma) \log \left( f^2(\sigma) /\E f^2(\sigma)\right)\right] ~.
\end{align*}
It is well known (see e.g.\ \cites{DS,AF}) that for any finite ergodic reversible Markov chain $0<2\sob<\gap$ and  $\gap^{-1}\leq \tmix(1/\mathrm{e})$.
In our case, since the sites are updated via rate-one independent Poisson clocks, we also have $\gap\leq 1$.

By bounding the log-Sobolev constant one may obtain remarkably sharp upper bounds not only for
 the total-variation mixing-time but also for the $L^2$-mixing (cf., e.g., \cites{DS1,DS2,DS,DS3,SaloffCoste}). The following theorem
of Diaconis and Saloff-Coste~\cite{DS}*{Theorem 3.7} (see also \cite{SaloffCoste},\cite{AF}*{Chapter 8}) demonstrates this powerful
method.
\begin{theorem}\label{thm-l2-sobolev}
Let $(Y_t)$ be a finite reversible continuous-time Markov chain with stationary distribution $\nu$.
For any $x$ with $\nu(x)\leq \mathrm{e}^{-1}$ and any $s > 0$,
\[ \left\|\P_x (X_s\in \cdot)-\pi\right\|_{L^2(\nu)}\leq \exp\bigg(1-\lambda\left(s-\frac1{4 \sob} \log\log \frac{1}{\nu(x)}\right)\bigg)\,.\]
\end{theorem}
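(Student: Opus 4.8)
The plan is to run the classical two-phase argument of Diaconis and Saloff-Coste: a short \emph{hypercontractive burn-in} of length $t_0$ governed by the log-Sobolev constant $\sob$, followed by a \emph{spectral-gap decay} over the remaining time $s-t_0$. Write $h^x_s(y)=\P_x(Y_s=y)/\nu(y)$ for the density of the law of $Y_s$ with respect to $\nu$, so that $h^x_0=\nu(x)^{-1}\one_{\{x\}}$ and, by reversibility, $h^x_s=H_sh^x_0$ where $H_t$ is the Markov semigroup. Since the quantity to be bounded is precisely $\|h^x_s-1\|_{L^2(\nu)}$, I would first control $\|h^x_{t_0}\|_{L^2(\nu)}$ by hypercontractivity, and then exploit that $h^x_s-1=H_{s-t_0}(h^x_{t_0}-1)$ is mean-zero and hence decays in $L^2(\nu)$ at rate $\gap$.

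For the burn-in I would invoke Gross's theorem: the log-Sobolev inequality $\operatorname{Ent}(f)\le\sob^{-1}\mathcal{E}(f)$ is equivalent to the family of hypercontractive bounds $\|H_tg\|_{L^q(\nu)}\le\|g\|_{L^p(\nu)}$, valid whenever $1<p\le q$ and $q-1\le(p-1)\,\mathrm{e}^{4\sob t}$ --- the standard proof differentiates $t\mapsto\|H_tg\|_{L^{q(t)}(\nu)}$ along the curve $q(t)-1=(p-1)\mathrm{e}^{4\sob t}$ and feeds in the log-Sobolev inequality applied to $(H_tg)^{q(t)/2}$. Choosing $p=p(t_0):=1+\mathrm{e}^{-4\sob t_0}$, the largest exponent whose hypercontractivity curve reaches $2$ by time $t_0$, yields $\|h^x_{t_0}\|_{L^2(\nu)}\le\|h^x_0\|_{L^{p}(\nu)}=\nu(x)^{1/p-1}\le\exp\!\big(\mathrm{e}^{-4\sob t_0}\log\tfrac1{\nu(x)}\big)$, using $1-\tfrac1p=\tfrac{p-1}p\le p-1$. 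The natural calibration is then $t_0=\tfrac1{4\sob}\log\log\tfrac1{\nu(x)}$, which is well defined and nonnegative exactly because $\nu(x)\le\mathrm{e}^{-1}$ (so $\log\log\tfrac1{\nu(x)}\ge0$); with this choice the exponent above equals $1$, i.e.\ $\|h^x_{t_0}\|_{L^2(\nu)}\le\mathrm{e}$.

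For the decay phase, assuming $s\ge t_0$: since $\E_\nu(h^x_{t_0}-1)=0$ and $H_u$ contracts mean-zero functions by $\mathrm{e}^{-\gap u}$ in $L^2(\nu)$ (the Poincar\'e inequality), one obtains $\|h^x_s-1\|_{L^2(\nu)}=\|H_{s-t_0}(h^x_{t_0}-1)\|_{L^2(\nu)}\le\mathrm{e}^{-\gap(s-t_0)}\|h^x_{t_0}\|_{L^2(\nu)}\le\exp\!\big(1-\gap(s-t_0)\big)$, which is the assertion. In the complementary range $0<s<t_0$, where the right-hand side exceeds $\mathrm{e}$, I would instead use the monotone bound $\|h^x_s-1\|_{L^2(\nu)}\le\mathrm{e}^{-\gap s}\|h^x_0-1\|_{L^2(\nu)}\le\mathrm{e}^{-\gap s}\nu(x)^{-1/2}$ together with the standard comparison $\sob\le\gap\big/\log\!\big(\tfrac{1-\nu(x)}{\nu(x)}\big)$; a short computation then shows $\tfrac12\log\tfrac1{\nu(x)}\le1+\gap t_0$, which is what is needed. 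This range is routine and is not used anywhere in the sequel.

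I do not anticipate a genuine obstacle here: the single non-elementary ingredient is Gross's equivalence between the log-Sobolev inequality and hypercontractivity, which I would cite rather than reprove, and the only point demanding care is the joint tuning of the exponent $p(t_0)$ and the burn-in length $t_0$, since a slightly suboptimal choice of either would spoil the sharp constant $\tfrac1{4\sob}$ multiplying $\log\log\tfrac1{\nu(x)}$.
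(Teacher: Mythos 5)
The paper itself does not prove this statement; it quotes it verbatim from Diaconis and Saloff-Coste \cite{DS}*{Theorem 3.7}, so the only meaningful comparison is with their classical argument, which is exactly what you reconstruct. Your main line --- the hypercontractive burn-in of length $t_0=\frac1{4\sob}\log\log\frac1{\nu(x)}$ via Gross's equivalence with the exponent choice $p=1+\mathrm{e}^{-4\sob t_0}$, giving $\|h^x_{t_0}\|_{L^2(\nu)}\le \mathrm{e}$, followed by Poincar\'e decay of the mean-zero part at rate $\lambda$ --- is correct, with the right normalization of the constant $4\sob$, and it proves the inequality for every $s\ge t_0$. That is precisely the regime in which the theorem is applied throughout the paper (each application takes $s$ at least $\frac1{4\sob}\log\log(1/\nu(x))$ plus an additional multiple of $\log\log n$), and it is also how \cite{DS}*{Theorem 3.7} is actually stated.

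The genuine gap is your patch for $0<s<t_0$. The comparison you invoke, $\sob\le\lambda\big/\log\frac{1-\nu(x)}{\nu(x)}$, is not a true general fact: the valid general relations are $2\sob\le\lambda$ and the \emph{lower} bound $\sob\ge\frac{(1-2\nu_*)\lambda}{\log[(1-\nu_*)/\nu_*]}$ with $\nu_*=\min_y\nu(y)$ (see the appendix of \cite{DS}); an upper bound on $\sob$ of order $\lambda/\log(1/\nu(x))$ fails for product chains. Concretely, for rate-one heat-bath dynamics on the hypercube $\{0,1\}^n$ with uniform $\nu$, tensorization makes $\lambda$ and $\sob$ absolute constants while $\nu(x)=2^{-n}$, so the inequality you need, $\frac12\log\frac1{\nu(x)}\le 1+\lambda t_0$, reads $\frac{n\log 2}{2}\le 1+O(\log n)$ and is false. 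Worse, the same example shows that the statement itself, read literally for all $s>0$, fails below the burn-in time: at fixed small $s$ the left-hand side equals $\big((1+\mathrm{e}^{-2s})^n-1\big)^{1/2}$, which is exponentially large in $n$, whereas the right-hand side is $\mathrm{e}^{1-\lambda s}(n\log 2)^{\lambda/4\sob}$, polynomial in $n$. So no argument can close that case; the ``any $s>0$'' should be read as $s\ge\frac1{4\sob}\log\log\frac1{\nu(x)}$ (as in \cite{DS}, and as used in the paper), or else the conclusion must be weakened in that range, e.g.\ to the trivial bound $\mathrm{e}^{-\lambda s}\nu(x)^{-1/2}$ that you derive. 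With that proviso your proof is complete and coincides with the classical one.
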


\subsection{Strong spatial mixing and logarithmic-Sobolev inequalities}

As noted in the introduction, bounds on the log-Sobolev constant of the Glauber dynamics for the Ising model were
proved under a variety of increasingly general spatial mixing conditions.
We will work under the assumption of strong spatial mixing (or regular complete analyticity) introduced by a Martinelli and Oliveri~\cite{MO}
as it holds for the largest known range of $\beta$.
\begin{definition}
For a set $\Lambda\subset \Z^d$ we say that $\mathtt{SM}(\Lambda,c_1,c_2)$ holds if there exist constants $c_1,c_2>0$ such that for any $\Delta\subset \Lambda$,
\[
\sup_{\tau,y} \ \Big\| \big(\mu_\Lambda^\tau\big)_\Delta - \big(\mu_\Lambda^{\tau^y}\big)_\Delta  \Big\|_{\tv} \leq c_1 \mathrm{e}^{-c_2
\dist(y,\Delta)}\,,
\]
where the supremum is over all $y \in \partial \Lambda$ and $\tau \in \{\pm1\}^{\partial\Lambda}$ and where $(\mu_\Lambda^\tau)_\Delta$ is the projection of the measure $\mu_\Lambda^\tau$ onto $\Delta$.
We say that strong spatial mixing holds
for the Ising model with inverse temperature $\beta$ and external field $h$ on $\Z^d$ if there exist $L,c_1,c_2>0$ such that $\mathtt{SM}(Q,c_1,c_2)$ holds for all cubes $Q$ of side-length at least $L$.
\end{definition}
The above definition implies uniqueness of the Gibbs measure on the infinite lattice.
Moreover, strong spatial mixing holds for all temperatures when $d=1$ and for $d=2$ it holds whenever $h\neq 0$ or $\beta<\beta_c$.
As discussed in the introduction, this condition further implies a uniform lower bound on the log-Sobolev constant
of the Glauber dynamics on cubes under any boundary condition $\tau$ (see \cites{MO,MO2,Martinelli97}).
We will make use of the next generalization of this result to periodic boundary conditions,
i.e.\ the dynamics on the torus, obtained by following the original arguments as given in \cite{Martinelli97} with minor alterations
(see also \cites{Cesi,GZ,LY,Martinelli04}).

\begin{theorem}\label{thm-log-sobolev-torus}
Suppose that the inverse-temperature $\beta$ and external field $h$ are such that
the Ising model on $\Z^d$ has strong spatial mixing. Then there exists a constant
$\sobinf=\sobinf(\beta,h) > 0$ such that the Glauber dynamics for the Ising model on
$(\Z/n\Z)^d$ with periodic boundary conditions has a log-Sobolev constant  at least $\sobinf$ independent of $n$.
\end{theorem}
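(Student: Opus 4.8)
The plan is to transfer the known uniform log-Sobolev bound for the Glauber dynamics on cubes (with \emph{arbitrary} boundary conditions), guaranteed by strong spatial mixing via the Martinelli--Olivieri machinery \cites{MO,MO2,Martinelli97}, to the torus $(\Z/n\Z)^d$. The central point is that the proof of the uniform log-Sobolev estimate on cubes in \cite{Martinelli97} proceeds by a recursive block decomposition: one covers the cube by overlapping sub-cubes of a fixed finite scale, invokes a sub-additivity (or Sweeping-out / Logarithmic-Sobolev-splitting) estimate for the entropy functional $\operatorname{Ent}(f)$ across the decomposition, and then iterates. I would argue that every ingredient in this recursion is \emph{local} and hence insensitive to whether the ambient geometry is a cube with a boundary condition $\tau$ or the torus: the single-scale base case only requires a uniform lower bound on the log-Sobolev constant of the dynamics on sets of bounded size, which holds trivially by compactness and ellipticity of the rates (item (3), positivity and boundedness); and the contraction step in the recursion is driven by the strong spatial mixing constants $c_1,c_2$, which are stated for cubes in $\Z^d$ and apply verbatim to the conditional measures arising inside the torus once one has decomposed away from the ``wrap-around''.

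Concretely, the steps in order: (i) Recall from \cite{Martinelli97} (see also \cites{Cesi,GZ,LY,Martinelli04}) the quantitative statement that $\mathtt{SM}(Q,c_1,c_2)$ for all large cubes $Q$ implies $\sob(\mu_Q^\tau) \geq \sob^\star > 0$ uniformly in $Q$ and in the boundary condition $\tau$. (ii) Fix $n$ and cover the torus $(\Z/n\Z)^d$ by a bounded-overlap family of cubes $\{Q_i\}$ each of side-length $\ell$, where $\ell$ is a large constant depending only on $d,c_1,c_2$ and not on $n$; since $\ell \ll n$ each $Q_i$ embeds isometrically in the torus as an honest cube, so the torus Gibbs measure restricted to $Q_i$ with any outside configuration is exactly a measure of the form $\mu_{Q_i}^\tau$ to which step (i) applies. (iii) Run the recursive argument of \cite{Martinelli97} on the torus using this cover: the entropy-splitting lemma $\operatorname{Ent}_\mu(f) \leq \sum_i \mathbb{E}_\mu[\operatorname{Ent}_{\mu_{Q_i}^{\cdot}}(f)] + (\text{mixing-error term})$ holds with the same constants because the only input is the exponential-decay estimate in the definition of $\mathtt{SM}$, and on the torus the relevant distances are measured in the torus metric which, at the finite scale $\ell$, coincides with the Euclidean one. (iv) Conclude $\sob\big((\Z/n\Z)^d\big) \geq \sob^\star$ uniformly in $n$, and set $\sobinf = \sob^\star$. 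Since $\gap \geq 2\sob$, this also gives the companion uniform spectral-gap bound used elsewhere.

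The main obstacle is \emph{not} a new idea but a careful bookkeeping one: one must verify that the constants in the Martinelli--Olivieri recursion genuinely depend only on $d$ and on $(c_1,c_2,L)$ and never implicitly on the ``shape'' of the ambient region (e.g.\ through isoperimetric or surface-area constants of the cube), so that replacing a large cube by the torus of comparable size changes nothing. In particular one should check the base case and the interpolation/covering lemma in \cite{Martinelli97}*{Sect.~3} line by line, confirming that each appeal to a conditional measure $\mu_\Delta^{\cdot}$ with $\Delta$ of bounded diameter is to a measure that also occurs inside the torus. A secondary, very minor point is the treatment of the coarsest scale: once the recursion has descended to blocks of size $\asymp n^{1/2}$ (or any mesoscopic scale well below $n$) the torus ``looks like'' a single large cube, and here one invokes the cube result of step (i) directly for the last step of the iteration; there is no genuine difficulty because the number of recursion levels is $O(\log n)$ and each level contributes a multiplicative factor bounded away from $1$ uniformly, so the product telescopes to a constant. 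The upshot is that Theorem~\ref{thm-log-sobolev-torus} follows from the existing literature ``with minor alterations,'' exactly as asserted, and the write-up should make those alterations explicit rather than reproving the log-Sobolev bound from scratch.
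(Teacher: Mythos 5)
Your overall route is the same one the paper takes: the paper offers no self-contained proof of Theorem~\ref{thm-log-sobolev-torus}, stating only that it is obtained ``by following the original arguments as given in \cite{Martinelli97} with minor alterations,'' and your proposal is essentially a plan for carrying out those alterations --- every cube of side much smaller than $n$ embeds isometrically in the torus, so the strong spatial mixing input and the uniform log-Sobolev bound for cubes under arbitrary boundary conditions from \cites{MO,MO2,Martinelli97} are available verbatim for the local pieces, and only the treatment of the top (wrap-around) scale requires new words.

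One sentence of your bookkeeping, however, is wrong as stated and sits at the crux of why a multiscale argument gives a \emph{uniform} constant: ``the number of recursion levels is $O(\log n)$ and each level contributes a multiplicative factor bounded away from $1$ uniformly, so the product telescopes to a constant.'' A factor bounded away from $1$ at each of $\Theta(\log n)$ levels yields a log-Sobolev constant degrading polynomially in $n$, which is exactly what the theorem must exclude. What the Martinelli--Olivieri recursion actually provides is a loss of $1+\epsilon_k$ at scale $\ell_k$, with $\epsilon_k$ of order $c_1\mathrm{e}^{-c_2 w_k}$ for an overlap width $w_k$ that grows with the scale, so that $\sum_k \epsilon_k<\infty$; it is this summability, not mere boundedness, that makes the product converge, and your write-up should say so explicitly (this is also where the hypothesis of strong spatial mixing, rather than bounded rates alone, is genuinely used beyond the base scale). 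Alternatively --- and this is arguably the cleanest ``minor alteration'' --- you can avoid rerunning the recursion on the torus altogether: cover $(\Z/n\Z)^d$ by a bounded number (e.g.\ $2^d$) of overlapping cubes of side comparable to $n$, each an honest sub-cube of the torus to which the uniform cube bound applies under any boundary condition, and apply the entropy quasi-factorization estimate (Cesi-type, cf.\ \cite{Cesi}) once across this finite cover, the mixing error being controlled by strong spatial mixing at overlap width of order $n$ and the Dirichlet forms adding with bounded multiplicity. Either repair closes the gap; as written, the justification of the final telescoping does not.
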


\section{Reducing $L^1$ mixing to $L^2$ local mixing}\label{sec:l1-l2}
In this section, we show that the $L^1$ (total-variation) distance of the dynamics on the lattice from stationarity is essentially determined by the $L^2$ distance from stationarity of a projection of this chain onto smaller boxes.

More precisely, consider the continuous-time Glauber dynamics $(X_t)$ for the Ising model on $\Z_n^d$, the $d$-dimensional lattice with side-length $n$ and periodic boundary conditions, and let $\mu$ denote its Gibbs distribution.
Further, consider such a chain on a smaller lattice, namely $(X_t^*)$ on $\Z_r^d$ for $r=3\log^3 n$, and let $\mu^*$ denote its stationary distribution.
(We actually have a lot of freedom in the choice of $r$, e.g.\ any poly-logarithmic value which is at least $\log^{2+\delta} n$ for some fixed $\delta> 0$ would do; this will prove useful later on for
relating the cutoff location to the spectral gap of the dynamics on the infinite-volume lattice.)
 Within this smaller lattice we consider a $d$-dimensional box $B$ with side-length $2\log^3 n$
(the location of the box $B$ within $\Z_r^d$ does not play a role as the boundary is periodic).
Define
\begin{align}
  \label{eq-mt-def}
  \ltwo_t &= \max_{x_0} \left\| \P_{x_0}\big(X^*_t(B) \in \cdot \big)  - \mu^*_{B} \right\|_{L^2(\mu^*_{B})}\,,
\end{align}
where $X^*_t(B)$ and $\mu^*_B$ are the projections of $X^*_t$ and $\mu^*$ resp.\ onto the box $B$.
The following theorem demonstrates how the $L^2$ mixing measured by the quantity $n^d \ltwo^2$ governs the $L^1$ mixing of $(X_t)$.
\begin{theorem}\label{thm-l1-l2}
Let $(X_t)$ be the continuous-time Glauber dynamics for the Ising model on $\Z_n^d$,
and define $\ltwo_t$ as in \eqref{eq-mt-def}.
The following then holds:
\begin{enumerate}[1.]
  \item Let $s=s(n)$ and $t=t(n)$ satisfy $ (10d/\sobinf) \log \log n \leq s < \log^{4/3}n$ and $0<t < \log^{4/3} n$. For any sufficiently large $n$,
\[
   \max_{x_0}\left\| \P_{x_0}(X_{t+s}\in\cdot)-\mu\right\|_\tv \leq
 \frac12 \left(\exp\big((n/\log^5 n)^d \,\ltwo_t^2\big) -1\right)^{1/2} + 3n^{-9d}\,.
\]
In particular, if $(n/\log^5 n)^d \,\ltwo^2_t \to 0$ as $n\to\infty$ for the above $s,t$ then
\[\limsup_{n\to\infty}    \max_{x_0}\left\|\P_{x_0}(X_{t+s}\in\cdot)-\mu\right\|_\tv = 0\,.\]
\item If $(n/ \log^{3} n)^d \ltwo^2_t \to \infty$ for some $t \geq (20d/\sobinf)\log\log n$, then
\[\liminf_{n\to\infty}    \max_{x_0}\left\|\P_{x_0}(X_t\in\cdot)-\mu\right\|_\tv = 1\,.\]
\end{enumerate}
\end{theorem}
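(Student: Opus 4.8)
The plan is to relate the $L^1$ mixing on the large torus $\Z_n^d$ to a product of $\approx (n/r)^d$ essentially independent copies of the dynamics on the small torus $\Z_r^d$, and then to convert $L^2$ control on each small factor into $L^1$ control on the product via the standard identity $\|\nu - \pi\|_{\tv}^2 \le \frac14(\|d\nu/d\pi\|_{L^2(\pi)}^2 - 1)$ together with tensorization of the $L^2$ distance. The key structural input, to be established in the next section, is the sparsity of the \emph{update support}: conditioned on the update sequence over a time window of length $t \le \log^{4/3}n$, the minimal set of sites whose initial spins determine the final configuration is, with probability $\ge 1 - n^{-10d}$ say, a \emph{sparse set} in the sense of Definition~\ref{def-sparse-set} --- remote clusters of diameter $O(\log^3 n)$ separated by distances $\gtrsim \log^2 n$. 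On the complementary bad event one uses the trivial bound on total variation; this is what produces the additive $n^{-9d}$ error terms.

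For the upper bound (part 1), first run the chain for the burn-in time $s \ge (10d/\sobinf)\log\log n$: by Theorem~\ref{thm-log-sobolev-torus} the log-Sobolev constant on $\Z_n^d$ is at least $\sobinf$, and $|\Omega| = 2^{n^d}$, so Theorem~\ref{thm-l2-sobolev} contracts the $L^2$ distance on the full torus by a huge factor --- enough to absorb the later blow-up --- while the remaining time-$t$ evolution is handled by the sparse-support decomposition. Concretely, I would condition on a sparse update support, note that the projection of $X_{t+s}$ onto each cluster is, up to an exponentially small (in $\log^2 n$) coupling error coming from strong spatial mixing and finite speed of information propagation, an independent sample from the corresponding small-torus dynamics started from its own initial data; hence the chi-square divergence of $\P_{x_0}(X_{t+s}\in\cdot)$ from $\mu$ is at most the product over clusters of $(1 + \text{(local chi-square)})$, and the number of clusters is at most $(n/\log^5 n)^d$ up to lower-order factors, giving $\prod(1+\ltwo_t^2) \le \exp((n/\log^5 n)^d \ltwo_t^2)$. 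Plugging into the $\tv \le \frac12(\chi^2)^{1/2}$ bound yields the stated inequality; the ``in particular'' clause is immediate. For the lower bound (part 2), I would exhibit a distinguishing statistic: since $(n/\log^3 n)^d \ltwo_t^2 \to \infty$, there is an initial state $x_0$ and a function $g$ on the small box $B$ with $\var_{\mu^*_B}(g)=1$ whose expectation under the time-$t$ projected chain differs from its stationary mean by $\gtrsim \ltwo_t$; summing (an independent-enough collection of) translates of $g$ over a grid of $\asymp (n/\log^3 n)^d$ well-separated copies of $B$ inside $\Z_n^d$ produces, again via the update-support decomposition, a statistic whose mean-gap grows like $(n/\log^3 n)^{d/2}\ltwo_t \to \infty$ standard deviations, so a Chebyshev/second-moment argument forces total variation to $1$. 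The threshold $t \ge (20d/\sobinf)\log\log n$ guarantees the one-step perturbation of the support decomposition is negligible at the relevant scale.

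The main obstacle I expect is making rigorous the claim that the projections onto the distinct clusters are \emph{genuinely} independent up to a controllable error: one must simultaneously (i) carry out the update-support construction so that it is a stopping-time-measurable object depending only on the graphical representation, (ii) use monotonicity to sandwich the true evolution between the all-plus and all-minus runs restricted to each cluster's ``history cone,'' and (iii) invoke strong spatial mixing (Theorem~\ref{thm-log-sobolev-torus}'s hypothesis) to show that the conditional law of each cluster, given everything outside a $\log^2 n$-neighborhood, is within $e^{-c\log^2 n}$ of the corresponding small-torus stationary-conditioned law --- and then propagate these per-cluster errors through a product of $\mathrm{poly}(n)$ many terms without the accumulated error swamping the $\ltwo_t^2$ signal. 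Balancing the box side-length $2\log^3 n$, the separation scale $\log^2 n$, the time budget $\log^{4/3}n$, and the log-Sobolev burn-in $\log\log n$ is exactly what the polylogarithmic bookkeeping in the theorem statement is engineered to accommodate.
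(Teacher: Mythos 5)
Your overall strategy for Part~1 (sparse update support, reduction to a product of small tori, chi-square tensorization and Cauchy--Schwarz) is the same as the paper's, but the way you allocate the two time windows contains a genuine error. You propose to use the burn-in $s$ \emph{first}, claiming that Theorem~\ref{thm-l2-sobolev} ``contracts the $L^2$ distance on the full torus by a huge factor'' in time $s\asymp\log\log n$. This is false: for the full torus the relevant threshold in Theorem~\ref{thm-l2-sobolev} is $\frac1{4\sob}\log\log(1/\mu(x_0))\asymp\log n$, so in time $O(\log\log n)$ the worst-case $L^2$ distance on $\Z_n^d$ has not even begun to decay. In the correct argument the window of length $s$ comes \emph{last}: it is the interval $(t,t+s]$ over which one runs the barrier-dynamics and extracts the update support, and the constraint $s\ge(10d/\sobinf)\log\log n$ is exactly what makes that support sparse (log-Sobolev is applied to the blocks $B^+$ of side $\asymp\log^2 n$, for which $\log\log(1/\pi(\underline 1))\asymp 2d\log\log n$, so the all-plus and all-minus chains on each block coalesce with high probability within time $s$), while $\ltwo_t$ measures the first $t$ units of local evolution. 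With your ordering the support window has length $t$, which in Part~1 may be arbitrarily small, so sparsity can fail; moreover the per-cluster comparison would then involve the chain at time $s$ only, i.e.\ you would obtain a bound in terms of $\ltwo_s$, not $\ltwo_t$. A secondary issue: you invoke strong spatial mixing for the (non-cube) $\log^2 n$-neighborhoods of the clusters to compare $\mu_\Delta$ with the product of small-torus measures; the hypothesis only provides $\mathtt{SM}$ for cubes, and the paper instead compares the two measures dynamically, running both chains for time $\log^{4/3}n$ to near-stationarity (Theorems~\ref{thm-l2-sobolev} and~\ref{thm-log-sobolev-torus}) and coupling their projections by finite speed of propagation.

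For Part~2 your route is genuinely different from the paper's and is workable in outline: by $L^2$ duality there is indeed a unit-variance local statistic with mean shift $\ltwo_t$, and summing $\asymp(n/\log^3 n)^d$ well-separated copies gives a Chebyshev separation of order $\sqrt{L\,\ltwo_t^2}\to\infty$; the paper instead computes the total-variation distance of the product chain exactly through the likelihood ratios $Y_i$ and applies a CLT to $\sum_i\log Y_i$. Note, however, that the lower bound does not use the update support at all (one couples the dynamics directly to a fixed grid of separated boxes and shows the stationary projection is nearly a product), and the hypothesis $t\ge(20d/\sobinf)\log\log n$ is needed not for ``the support decomposition'' but to obtain $\|Y_i-1\|_\infty=o(1)$ via the $L^\infty$-to-$L^2$ reduction --- which is also precisely what would control the variance of your statistic under the non-stationary measure; you would need to make that step explicit for the Chebyshev argument to close.
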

\begin{remark}\label{rem-upper-bound-m}
It will be useful to apply Part~1 of the above theorem to lattices of varying sizes.
Indeed, we will show that if $(X_t)$ is the continuous-time Glauber dynamics
for the Ising model on $\Z_m^d$ with
\begin{equation*}
  \log^3 n \leq m \leq n\,,
\end{equation*}
and with $s,t,\ltwo_t$ as in Theorem~\ref{thm-l1-l2}
(e.g., $s,t < \log^{4/3}n$, the box $B$ in the definition of $\ltwo_t$ given in \eqref{eq-mt-def} has side length $2\log^3 n$, etc.), then
\begin{equation}\label{e:l1-l2varyingm}
    \max_{x_0}\left\| \P_{x_0}(X_{t+s}\in\cdot)-\mu\right\|_\tv \leq
 \frac12 \left(\exp\big(m^d \,\ltwo_t^2\big) -1\right)^{1/2} + 3n^{-9d}\,.\end{equation}
\end{remark}

The upper and lower bounds stated in the above theorem appear in Subsections~\ref{subsec-thm-l1l2-upper-bound}
and \ref{subsec-thm-l1l2-lower-bound} respectively. We begin by describing two key ingredients in the proof of
the $L^1$-$L^2$ reduction that enable us to eliminate long-range dependencies between spins and break down
the lattice into smaller independent blocks.
First we analyze which spins effectively influence the final configuration at some designated
target time and characterize the geometric structure of components comprising such spins.
Second, we introduce the \emph{barrier-dynamics}, a variant of the Glauber dynamics that separates the lattice
into weakly-dependent blocks by surrounding each one with a periodic-boundary barrier.

\subsection{Eliminating long-range dependencies}\label{subsec-long-range}
Consider some time point $t=t(n) \asymp \log n$ just prior to mixing, and let $s=s(n) \asymp \log\log n$ be a short time-frame.
Our goal in this section is to bounds the $L^1$-distance of the Glauber dynamics from equilibrium at time $t+s$ in terms of
the $L^1$-distance projected onto sparse subsets of the spins.
\begin{definition}[\emph{Sparse set}]\label{def-sparse-set}
Let $\log^3 n \leq m \leq n$.
We say that the set $\Delta \subset \Z_m^d$ is \emph{sparse} if for some $L\leq (n /\log^{5} n)^d$ it can be partitioned into components $A_1,\ldots,A_L$ such that
\begin{enumerate}
  [\indent1.] \item Every $A_i$ has diameter at most $\log^3 n$ in $\Z_m^d$.
  \item The distance in $\Z_m^d$ between any distinct $A_i,A_j$ is at least $2d \log^2 n$.
\end{enumerate}
Let $\sparse=\sparse(m)=\{ \Delta\subset\Z_m^d : \mbox{$\Delta$ is sparse}\}$.
\end{definition}
\begin{theorem}\label{thm-xt-bound-a-sparse}
For $\log^3 n \leq m \leq n$ let $(X_t)$ be the Glauber dynamics on $\Z_m^d$ and $\mu$ its stationary measure. Let $(10d/\sobinf) \log\log n \leq s \leq \log^{4/3} n$ and $t>0$. Then there exists some distribution $\nu$ on $\sparse(m)$ such that
\begin{align*}
\left\| \P_{x_0}(X_{t+s}\in\cdot)-\mu\right\|_\tv &\leq \int_{\sparse}\left\|\P_{x_0}(X_t(\Delta)\in\cdot)-\mu_{\Delta} \right\|_\tv\, d\nu(\Delta) +3 n^{-10d}\,.
 \end{align*}
\end{theorem}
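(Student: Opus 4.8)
The plan is to run the dynamics for the extra time $s$ and argue that, conditioned on the Poisson update sequence in the window $[t,t+s]$, the configuration at time $t+s$ is a deterministic function of the spins at time $t$ restricted to a random subset $\Delta$ of $\Z_m^d$ — the \emph{update support} — and that this random $\Delta$ is sparse with overwhelming probability. Once this is in place, the triangle inequality gives
\[
\left\| \P_{x_0}(X_{t+s}\in\cdot)-\mu\right\|_\tv \le \E_\nu\Big[\left\|\P_{x_0}(X_t(\Delta)\in\cdot)-\mu_\Delta\right\|_\tv\Big] + \P(\Delta \notin \sparse(m)),
\]
where $\nu$ is the law of the (sparsified) update support; coupling $X_{t+s}$ started from $\mu$ so that it agrees with $X_t$ on $\Delta$ (possible because $\mu$ is stationary and the map from $X_t(\Delta)$ to $X_{t+s}$ is the same deterministic function) is what produces the projected total-variation term, and the error $3n^{-10d}$ absorbs the probability of a non-sparse support.

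First I would make precise the notion of update support. Using the graphical construction, each site carries a rate-one Poisson clock and an i.i.d.\ uniform mark; reading the clock rings in $[t,t+s]$ backwards in time, a site $x$ whose last ring before $t+s$ uses a mark that forces the new spin independently of the neighbours (a ``free'' update, which for heat-bath/Metropolis happens with probability bounded below by a constant $p_0=p_0(\beta,h)>0$) is \emph{resolved}; a site that is never rung, or whose updates always depend on neighbours, must be traced back further, recursively pulling in the neighbours at the times of dependent rings. The update support $\Delta$ is the set of sites whose time-$t$ spin is still needed after this backward resolution over the whole window $[t,t+s]$. Equivalently, $\Delta$ is contained in the set of sites reachable by a backward ``dependency path'' that never hits a free update.

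The heart of the argument — and the step I expect to be the main obstacle — is showing that with $s \ge (10d/\sobinf)\log\log n$ this $\Delta$ is sparse (Definition~\ref{def-sparse-set}) except with probability $n^{-10d}$. The dependency-path exploration is dominated by a subcritical branching-type process: at each dependent ring a site spawns at most $2d$ neighbours, but free updates (occurring at constant rate $\ge p_0$) kill a branch, so over a time window of length $s$ the probability that a given site lies in a dependency cluster of diameter $\ge \ell$ decays like $e^{-c \ell}$ for $\ell$ up to order $s$, and more importantly the cluster containing a fixed site has diameter $O(\log^3 n)$ except with probability $n^{-\omega(1)}$ once $s \gtrsim \log\log n$ — here one uses that the relevant ``speed'' is controlled by the constant-order spectral-gap/log-Sobolev input (Theorem~\ref{thm-log-sobolev-torus}), which guarantees that on boxes of side $O(\log^3 n)$ the dynamics relaxes on a timescale $\ll s$, so with high probability every cluster has had a free update and been pinned. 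A union bound over the $\le m^d \le n^d$ sites then controls the event that some cluster is too large. For the separation condition one notes that two dependency clusters that come within distance $2d\log^2 n$ would, through the recursive neighbour-pulling, typically merge; the probability that two explorations of diameter $\le \log^3 n$ nevertheless stay disjoint yet within distance $2d\log^2 n$ is again exponentially small in $\log^2 n$, summable over the $\le n^{2d}$ pairs of sites. Taking $L \le (n/\log^5 n)^d$ is comfortable since each cluster has volume at most $(\log^3 n)^d$ and they are $\Omega(\log^2 n)$-separated. Defining $\nu$ as the law of $\Delta$ on the event that it is sparse (and as an arbitrary sparse set off that event), and bounding the total-variation distance by $1$ on the bad event, yields the claimed inequality with room to spare in the $3n^{-10d}$ error term; the restriction $s \le \log^{4/3}n$ keeps the number of rings polylogarithmic so that all the tail estimates above are uniform. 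The technical care needed to make the branching-process domination rigorous in the presence of the geometry of $\Z_m^d$ (overlapping explorations, the precise recursion defining $\Delta$, and extracting the constant $10d/\sobinf$ in the lower bound on $s$) is where the real work lies.
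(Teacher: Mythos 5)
Your reduction step is sound and matches the paper's: conditioned on the update sequence $W_s$ over the window of length $s$, the final state is a fixed function of the time-$t$ configuration restricted to the update support $\Delta_{W_s}$, projections contract total variation, and the error term absorbs $\P(\Delta_{W_s}\notin\sparse)$ (the paper routes this through a ``barrier dynamics'' with blocks of side $\log^2 n$ and buffers of width $\log^{3/2}n$, paying $O(n^{-10d})$ coupling errors, but that is a technical variant of your setup). The genuine gap is in your mechanism for sparsity. You propose to kill backward dependency paths with ``free'' updates occurring at a constant rate $p_0>0$ and to dominate the dependency cluster by a subcritical branching process. That domination is subcritical only when $p_0$ is large compared with $2d$, i.e.\ at very high temperature; throughout most of the strong spatial mixing regime (e.g.\ $\beta$ close to $\beta_c$ on $\Z^2$) the probability that a single update ignores its neighbours is small, the naive branching process is supercritical, and under your analysis essentially every block would remain unresolved, so no sparsity follows. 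This is not a technicality to be patched: the hypothesis $s\geq(10d/\sobinf)\log\log n$ signals that the correct resolution mechanism is relaxation, not free updates. The paper shows, for each extended block $B^+$ equipped with its own (periodic) barrier so that its evolution is a bona fide Glauber dynamics, that the chains started from all-plus and all-minus coalesce by time $s$ with probability $1-O((\log n)^{-6d})$, via monotonicity together with the $L^2$ bound from the log-Sobolev inequality (Theorems~\ref{thm-l2-sobolev} and \ref{thm-log-sobolev-torus}); coalescence means the block is not in the support. Your passing appeal to fast relaxation does not substitute for this, both because relaxation does not produce ``free updates'' and because without the barrier (or an explicit monotone sandwich with controlled boundary influence) the dynamics on a box is not a Markov chain to which those theorems apply.

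A second, smaller flaw is your treatment of the separation requirement. It is not true that two support clusters lying within distance $2d\log^2 n$ of each other is exponentially unlikely in $\log^2 n$: two adjacent blocks can both be unresolved with probability only polylogarithmically small, and nothing prevents it. The correct move (and the paper's) is to \emph{merge} support blocks at block-distance at most $3d$ into a single component, which makes the separation condition automatic, and then to control (i) the number of components, using independence of the events $E_B$ for blocks at block-distance at least $4$ — an independence that itself comes from the barrier construction — and (ii) the diameter of a component, by a union bound over chains of blocks each intersecting the support, again exploiting that independence and the per-block estimate $\P(E_B)\leq(\tfrac12\log n)^{-6d}$. As written, your proposal neither establishes the per-block estimate in the claimed generality nor provides the independence structure needed for these union bounds.
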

\begin{proof}
To prove the above theorem, we introduce the following variant of the Glauber dynamics which breaks down the dynamics into smaller blocks over which we have better control.
\begin{definition}
  [\emph{Barrier-dynamics}]\label{def-barrier}
Let $(X_t)$ be the Glauber dynamics for the Ising model on $\Z_m^d$.
Define the corresponding \emph{barrier-dynamics} as the following coupled Markov chain:
\begin{enumerate}
  \item Partition the lattice into disjoint $d$-dimensional boxes (or blocks), where each side-length is either $\log^2 n$ or $\log^2 n - 1$.
  \item For each such box $B$, let $B^+$ be the $d$-dimensional box centered at $B$ with side-lengths $\log^2 n + 2\log^{3/2} n$, e.g., if $B$ has side-length $\log^2n$
  \[B^+ = \bigcup_{v \in B}\{ u : \|u-v\|_\infty \leq \log^{3/2} n\}\,.\]
  Let $\psi_{B}$ be a graph isomorphism mapping $B^+$ onto some block $C^+$ (and $B$ onto $C \subset C^+$), where the $C^+$ blocks are pairwise disjoint.
  \item Impose a periodic boundary condition on each $C^+$ (to be thought of as a barrier surrounding it), and run the following dynamics:
  As usual, each site $u$ in $\Z_m^d$ receives updates according to a unit rate Poisson clock.
  Updating $u$ at time $t$ via a variable $I\sim U[0,1]$ in the standard dynamics implies updating every $v = \psi_B(u)$ (for some $B$ with $u \in B^+$) via the same update variable $I$.
\end{enumerate}
\end{definition}
The above Markov chain gives rise to the following randomized operator $\mathcal{G}_s$ (for $s > 0$) on $\{\pm1\}^{\Z_m^d}$.
Given an initial configuration $x_0$ for $\Z_m^d$, we translate it to a configuration for the $C^+$ blocks in the obvious manner, and then run the barrier-dynamics
for time $s$. The output of the operator $\mathcal{G}_s$ is obtained by assigning each $u\in\Z_m^d$ the value of $\psi_B(u)$, where $B$ is
the (unique) block that contains $u$. In other words, we pull-back the configuration from the $C$'s onto $\Z_m^d$ (while discarding the spins of the overlaps).

To simplify the notations, we identify the blocks $C,C^+$ with $B,B^+$ whenever there is no danger of confusion.

Note that for the mixing-time analysis, we are only interested in the behavior of the dynamics up to time $O(\log n)$, and the above parameters were chosen accordingly.
Indeed, the next lemma shows that the barrier-dynamics can be coupled to the original one up to time $(\log n)^{4/3}$ except with a negligible error-probability.
\begin{lemma}\label{lem-barrier-couple}
Let $t_0 = (\log n)^{4/3}$. The barrier-dynamics and the original Glauber dynamics are coupled up to time $t_0$
except with probability $n^{-10d}$. That is, except with probability $n^{-10d}$, for any $X_0$ we have $X_{s} = \mathcal{G}_s(X_0)$ simultaneously for all $s \leq t_0$.
\end{lemma}
\begin{proof}
Let $(X_t)$ denote the Glauber dynamics on $\Z_m^d$ and let $(\tilde{X}_t)$ denote the barrier-dynamics with corresponding blocks $B_i$ and $B_i^+$.
Apply the aforementioned coupling between the two processes, where the original Glauber dynamics runs as usual, and the barrier-dynamics
uses the same updates for each of its sites.
That is, if site $u$ is updated in the original dynamics via a uniform real $I\sim U[0,1]$, we update it in every $B^+$ that contains it
using the same $I$ at the same time.

Consider some box $B$ and its block $B^+$. Clearly, the barrier-dynamics on $B^+$ is identical to the original dynamics
until it needs to update $\partial B^+$, the boundary of $B^+$ (in which case $(\tilde{X}_t)$ has periodic conditions whereas $(X_t)$ uses external sites in the lattice).

Therefore, a necessary condition to have $X_t(v)\neq \tilde{X}_t(v)$ for some $v\in B$ is the existence of a path of adjacent sites $u_1,\ldots,u_\ell$ connecting $v$ to $\partial B^+$, and a sequence of times $t_1<\ldots<t_\ell\leq t$ such that site $u_i$ was updated at time $t_i$ (note that $\ell \geq \log^{3/2} n$ by definition). Summing over all $(2d)^\ell$ possible paths originating from $v$, and accounting for the probability that the $\ell$ corresponding rate $1$ Poisson clocks fire sequentially before time $t\leq t_0$ (while recalling that $t_0 = o(\ell)$) it then follows that
\begin{align*}
&\sum_{t \leq t_0} \P\left(\cup_i \left\{ X_t(B_i) \neq \tilde{X}_t( B_i)\right\}\right) \leq t_0 n^d \sum_{\ell \geq \log^{3/2}n} (2d)^\ell \, \P(\Po(t_0) \geq \ell) \\
&\quad \leq 2 t_0 n^d \mathrm{e}^{-t_0}\sum_{\ell \geq \log^{3/2}n} \frac{(2dt_0)^\ell }{\ell!} \leq
n^{d - \sqrt{\log n}} < n^{-10d}\,,
\end{align*}
where the inequalities hold for any large $n$ (with room to spare).
\end{proof}
In light of the above lemma, we can focus on the barrier-dynamics for the sake of proving Theorem~\ref{thm-xt-bound-a-sparse}.
Crucially, suitably distant sites evolve independently in this new setting.

The random operator $\mathcal{G}_s$ is determined by the random update sequence $W_s$ (each update is a tuple $(u_j,t_j,I_j)$, where $u_j$ is the site that was updated, $t_j$ is the time of update and $I_j$ is the unit variable determining the update result). In other words, for any such sequence $W_s$ there exists some deterministic function $g_{W_s} : \{\pm1\}^{\Z_m^d} \to \{\pm1\}^{\Z_m^d}$
so that $\mathcal{G}_s(x) = g_{W_s}(x)$ for all $x$.
Further note that $g_{W_s}$ is monotone, by the monotonicity of the Ising model.
We use the abbreviated form $\P(W_s)$ for the probability of encountering the specific update sequence $W_s$ between times $(0,s)$.

\begin{definition}[\emph{Update support}]\label{def-support} Let $W_s$ be an update sequence for the barrier-dynamics between times $(0,s)$.
The \emph{support} of $W_s$ is the minimum subset $\Delta_{W_s} \subset \Z_m^d$ such that $\mathcal{G}_s(x)$ is a function of $x(\Delta_{W_s})$ for any $x$, i.e.,
\[ g_{W_s}(x) = f_{W_s}(x(\Delta_{W_s})) \mbox{ for some $f_{W_s}:\{\pm1\}^{\Delta_{W_s}}\to \{\pm1\}^{\Z_m^d}$ and all $x$.}\]
In other words, $v \notin \Delta_{W_s}$ if and only if for every initial configuration $x$, modifying the spin at $v$ does not
affect the configuration $g_{W_s}(x)$.  This definition uniquely defines the support of $W_s$.
\end{definition}

Using this notion of the support of updates in the barrier-dynamics, we can now infer the following upper bound on the $L^1$-distance of the original dynamics to stationarity.
\begin{lemma}\label{lem-xt-bound-aw} Let $(X_t)$ be the Glauber dynamics on $\Z_m^d$, and ${W_s}$ be the random update sequence for the barrier-dynamics
along an interval $(0,s)$ for some $s \leq \log^{4/3} n$. For any $x_0$ and $t>0$,
\begin{align*}
\left\| \P_{x_0}(X_{t+s}\in\cdot)-\mu\right\|_\tv &\leq \int\!\!\left\|\P_{x_0}(X_t(\Delta_{W_s})\in\cdot)-\mu_{\Delta_{W_s}} \right\|_\tv d \P({W_s})+ 2 n^{-10d}.
 \end{align*}
\end{lemma}
\begin{proof}
Let $X_t$ be the Glauber dynamics at time $t$ started from $X_0=x_0$,
as usual let $\Omega_m = \{\pm1 \}^{\Z_m^d}$ and let $Y \in \Omega_m$ be distributed according to $\mu$.
Recalling that $g_{W_s}$ denotes the deterministic function associated with an update sequence $W_s$ for the barrier-dynamics in the interval $(0,s)$, for any random configuration $X \in \Omega_m$ we have
 \begin{align*}
\left\|\P(\mathcal{G}_s(X)\in\cdot) - \P(\mathcal{G}_s(Y)\in\cdot)\right\|_\tv
=  \max_{\Lambda\subset \Omega_m} \left[\P(\mathcal{G}_s(X) \in\Lambda) - \P(\mathcal{G}_s(Y)\in\Lambda)\right] \\
=  \max_{\Lambda\subset \Omega_m} \int\left[ \P(g_{W_s}(X)\in\Lambda) -\P(g_{W_s}(Y)\in \Lambda)\right]\,d\,\P({W_s})\,.\end{align*}
Since $g_{W_s}(X) = f_{W_s}(X(\Delta_{W_s}))$ by definition of $\Delta_{W_s}$, the above is at most
\begin{align*}
 &\int  \max_{\Lambda\subset \Omega_m} \left[ \P\big(f_{W_s}(X(\Delta_{W_s}))\in\Lambda\big) -\P\big(f_{W_s}(Y(\Delta_{W_s}))\in \Lambda\big)\right]\,d\,\P({W_s})\\
\leq &\int \left\| \P\big(X(\Delta_{W_s})\in\cdot\big) - \P\big(Y(\Delta_{W_s})\in\cdot\big)\right\|_\tv\,d\,\P({W_s})\\
= &\int \left\| \P\big(X(\Delta_{W_s})\in\cdot\big) - \mu_{\Delta_{W_s}}\right\|_\tv\,d\,\P({W_s})\,,
 \end{align*}
where the inequality in the second line used the fact that when taking a projection of two measures their total-variation can only decrease.

Since $s < \log^{4/3} n$, by Lemma~\ref{lem-barrier-couple} we can couple $X_{t+s}$ and $\mathcal{G}_s(X_t)$ together
except with probability $n^{-10d}$ and hence
\[
\left\|\P(X_{t+s}\in\cdot) - \P(\mathcal{G}_s(X_t)\in\cdot)\right\|_\tv \leq n^{-10d}\, .
\]
Similarly, by Lemma~\ref{lem-barrier-couple} we can couple $\mathcal{G}_s(Y)$ with the Glauber dynamics run from $Y$ for time $s$ (having the stationary distribution $\mu$) and hence
\[
\left\|\P(\mathcal{G}_s(Y)\in\cdot)-\mu\right\|_\tv \leq n^{-10d}\, .
\]
Combining these estimates, it follows that
 \begin{align*}
\left\|\P(X_{t+s}\in\cdot) - \mu\right\|_\tv &\leq\left\|\P(\mathcal{G}_s(X_t)\in\cdot) - \P(\mathcal{G}_s(Y)\in\cdot)\right\|_\tv + 2n^{-10d} \\
& \leq 
\int \left\| \P\big(X_t(\Delta_{W_s})\in\cdot\big) - \mu_{\Delta_{W_s}}\right\|_\tv\,d\,\P({W_s})+ 2n^{-10d}\,,
\end{align*}
as required.
\end{proof}

\begin{figure}
\centering \includegraphics[width=4.5in]{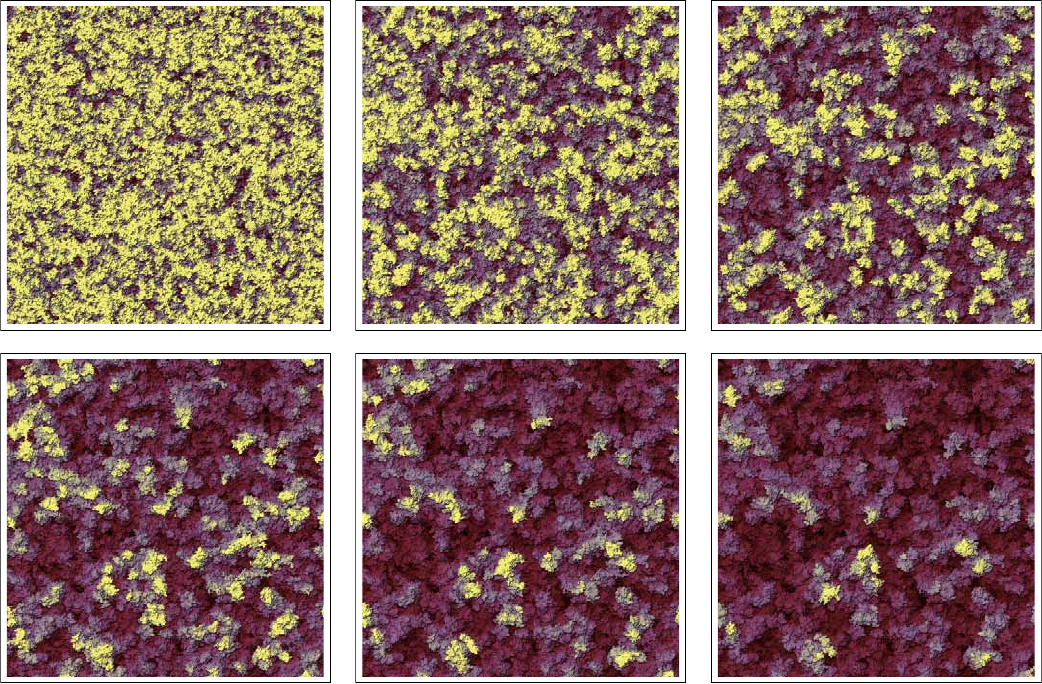}
\caption{Evolution of the update support over increasing time intervals, simulated on a $500\times 500$ square lattice at inverse-temperature $\beta=0.4$.
Highlighted regions correspond to the components comprising the sparse support.}
\label{fig:support-evol}
\end{figure}

Thus far, we have established an upper bound on the $L^1$-distance between $X_{t+s}$ and $\mu$ in terms of $\Delta_{W_s}\subset \Z_m^d$, the support of the update sequence in the barrier-dynamics operator $\mathcal{G}_s$ along the interval $(0,s)$.
We now wish to investigate the geometry of the set of sites comprising $\Delta_{W_s}$ for a typical update sequence ${W_s}$.
The following lemma estimates the probability that $\Delta_{W_s}$ is sparse, as characterized in Definition~\ref{def-sparse-set}.
Figure~\ref{fig:support-evol} shows a realization of the update support becoming sparser with time.
\begin{lemma}\label{lem-sparse-prob}
Let $\mathcal{G}_s$ be the barrier-dynamics operator on $\Z_m^d$, 
let ${W_s}$ be the update sequence up to time $s$ for some $s \geq (10d/\sobinf) \log\log n$, and $\sparse$ be the collection of sparse sets of $\Z_m^d$.
Then $\P(\Delta_{W_s} \in \sparse) \geq 1- n^{-10d}$ for any sufficiently large $n$.
\end{lemma}
\begin{proof}
First, consider a single block $B$ in the barrier-dynamics on $\Z_m^d$ and for simplicity let $\pi$ stand for $\mu_{B^+}$,
the stationary distribution on $B^+$.
Let $E_{B}$ denote the event that $ \Delta_{W_s} \cap B \neq \emptyset$ for a random update sequence ${W_s}$.

Observe that, by definition, the following holds for all $s$: If $\bar{B} \neq B$ are two distinct blocks, then modifying the value of a spin $u \in B$ in the initial configuration $X_0$ can only affect $\tX_{s}(\bar{B}^+)$ provided that $B,\bar{B}$ are adjacent (otherwise the barrier around $\bar{B}$ prevents the effect of this change).
Hence, when assessing whether $u\in \Delta_{W_s}$ it suffices to consider the projection of $\tX_{s}$ onto the block $B$ and all of its neighboring blocks; let $N(B)$ denote this set of $3^d$ blocks.

 Let $(\tX_t^+)$ and $(\tX_t^-)$ be two instances of the barrier-dynamics restricted to $\bar{B}^+$ starting from the all-plus and all-minus
 states respectively, coupled via the monotone coupling (note that the restriction to one block turns these into the standard Glauber dynamics). Clearly, if these two chains coalesce under the update sequence ${W_s}$ at some point $0<t<s$ then either one of them predicts $\tX_{s}(\bar{B}^+)$ regardless of the value of $\tX_0(\bar{B}^+)$, and so
 \[ \P(E_B) \leq \P \bigg( \bigcup_{\bar{B}\in N(B)} X^+_s(\bar{B}^+) \neq X^-_s(\bar{B}^+)\bigg) \leq 3^d \,
\P(\tX_s^+(B^+) \neq \tX_s^-(B^+))
 \,,\]
where the last inequality is by symmetry. As the system is monotone,
\begin{align}
\P(E_B) &\leq 3^d \, \P\left(\tX_s^+(B^+) \neq \tX_s^-(B^+)\right) \leq 
 3^d \, \sum_{u\in B^+} \P\left(\tX^+_s(u)\neq\tX^-_s(u)\right)   \,.\label{eq-EB-magnetization}
\end{align}
Moreover, for any $u\in B^+$
\begin{align*}
\P\left(\tX^+_s(u)\neq\tX^-_s(u)\right)  &\leq
\| \P(\tX^+_s\in\cdot) - \pi \|_\tv + \| \P(\tX^-_s\in\cdot) - \pi \|_\tv \\
&\leq \tfrac12 \|\P(\tX^+_s\in\cdot) - \pi\|_{L^2(\pi)}
+ \tfrac12\|\P(\tX^-_s\in\cdot) - \pi\|_{L^2(\pi)}\,,
\end{align*}
and by Theorem~\ref{thm-l2-sobolev}, 
if the all-plus state $\underline{1}$ has stationary measure at most $\mathrm{e}^{-1}$ (clearly the case for large $n$) then for any $s >0$
\[ \left\|\P (\tX^+_s\in \cdot)-\pi\right\|_{L^2(\pi)}\leq \exp\bigg(1-\gap\left(s-\frac1{4 \sob} \log\log \frac{1}{\pi(\underline{1})}\right)\bigg)\,,\]
where $\lambda$ and $\sob$ are the spectral gap and log-Sobolev constant resp.\ of the Glauber dynamics on $B^+$. Recalling that $\pi(\underline{1}) \geq 1/|\Omega_{B^+}| = 2^{-(1+o(1))\log^{2d} n}$
and that $\gap \geq \sob \geq \sobinf$, the assumption on $s$ gives that
\[ s \geq \frac1{4\sobinf} \log\log (1/\pi(\underline{1})) + \frac{8d}{\gap}\log\log n \]
for any sufficiently large $n$, and in this case
\[ \|\P(\tX^+_s\in\cdot) - \pi\|_{L^2(\pi)} \leq 3 (\log n)^{-8d}\,.\]
By the exact same argument we have
\[ \|\P(\tX^-_s\in\cdot) - \pi\|_{L^2(\pi)} \leq 3 (\log n)^{-8d}\,,\]
and it now follows that
\[ \sum_{u\in B^+} \P\left(\tX^+_s(u)\neq\tX^-_s(u)\right) \Big) \leq (3+o(1)) (\log n)^{-6d} < 4(\log n)^{-6d}\,,\]
where the last inequality holds for large $n$. Combining this with \eqref{eq-EB-magnetization} yields
that for any large $n$, the following holds with room to spare: \[\P(E_{B}) \leq (\tfrac12 \log n)^{-6d}\,.\]

This estimate will now readily imply a bound on the number of components in the support of $W_s$.
Let $E^\sharp$ denote the following event: There exists a collection $\mathcal{B}$ of $L \geq (n /\log^{7}n)^d$ blocks, such that $E_B$
holds (that is, $\Delta_{W_s} \cap B \neq \emptyset$) for all $B\in\mathcal{B}$, and the pairwise distances in blocks between the blocks in $\mathcal{B}$ are all at least $4$.
We claim that $\P(E^\sharp) \leq n^{-20d}$ for large $n$.

To see this, first notice that if a block $B$ has a distance of at least $4$ blocks from a set of blocks $\mathcal{B}$ (i.e., for any $B' \in \mathcal{B'}$, no two blocks in $N(B)$ and $N(B')$ are adjacent), then the variable $\one_{E_B}$ is independent of $\{\one_{E_{B'}} : B\in\mathcal{B}\}$.
Indeed, the initial configuration on $B$ can only affect the outcome of $\tX_s(N(B))$, and by definition this outcome is derived from the initial configuration via the updates of the sites $U_B = \cup_{\bar{B}\in N(B)}\bar{B}^+$. Our assumption on the distance between $B$ and $\mathcal{B}$ precisely implies that the above $U_B$ is disjoint to any $U_{B'}$ for $B' \neq B$ in $\mathcal{B}$, and the statement now follows from the independence of updates to distinct sites.
Hence, as the total number of blocks is $(1+o(1))(n/ \log^{2} n)^d$, for large $n$ we have
\[ \P(E^\sharp) \leq \binom{(2n/\log^{2} n)^d}{(n/\log^{7} n)^d} \left(\tfrac12 \log n\right)^{-6d (n/\log^{7} n)^d}
< n^{-(n /\log^{8}n)^d}\,.\]

Now suppose that there is a sequence of blocks, $(B_{i_0},B_{i_1},B_{i_2},\ldots,B_{i_\ell})$ for some $\ell \geq \ell_0 = \frac1{3d}\log n$, such that
for all $k$,
 \begin{enumerate}
 \item \label{item-prop-1} The distance in blocks between $B_{i_{k-1}},B_{i_{k}}$ is at most $3d$ (i.e., they are the endpoints of a path of at most $3d+1$ adjacent blocks).
 \item \label{item-prop-2} We have $\Delta_{W_s} \cap B_{i_{k}} \neq \emptyset$.
 \end{enumerate}
Clearly, if the distance in blocks between some $B,B'$ is at least $3d+1$, then since every block has side-length at least $\log^2 n - 1$,
the distance between any two sites $u\in B$ and $v\in B'$ is at least $2d\log^2 n $ for sufficiently large $n$.

Observe that if $\Delta_{W_s}\notin \sparse$, then either the event $E^\sharp$ holds or a sequence of blocks as described above must exist.
Indeed, consider some $\Delta_{W_s}$ that is not sparse, and partition it into components as follows:
\begin{align*}
&  u \in B\cap\Delta_{W_s}\mbox{ and } u' \in B'\cap\Delta_{W_s} \mbox{ belong to the same component }\\
&  \Longleftrightarrow\quad \mbox{the distance in blocks between $B,B'$ is at most $3d$.}
\end{align*}
The number of components is clearly at most $(n/\log^7 n)^d$, otherwise the event $E^\sharp$ holds. Furthermore,
by definition (as argued above), the distance between any two distinct components is at least $2d\log^2 n$.
Hence, the assumption that $\Delta_{W_s}\notin\sparse$ implies that some component $A_i$ must have a diameter larger than $\log^3 n$. In particular, there are two sites $u,u'\in A_i$ belonging to $B,B'$ respectively, such that $B,B'$ have distance of at least $\log n$ blocks between them. Moreover, by the way we defined the component $A_i$ there are blocks $B=B_{j_0},\ldots,B_{j_\ell}=B'$ such that $B_{j_i} \cap \Delta_{W_s} \neq \emptyset$ and the distance in blocks between $B_{j_i},B_{j_{i+1}}$ is at most $3d$. As the assumption on $u,u'$ implies that $\ell \geq \frac1{3d}\log n$, this sequence satisfies the required properties.

It therefore remains to show that, except with probability $n^{-20d}$, there does not exist a sequence of blocks satisfying the above properties
\eqref{item-prop-1},\eqref{item-prop-2}.

The above described sequence in particular contains a set $\mathcal{B}$ of at least $\ell/7^d$ blocks, whose pairwise distances are all at least $4$, and $\Delta_{W_s} \cap B \neq \emptyset$ for all $B \in \mathcal{B}$ (for instance, take $\mathcal{B} = \{B_{i_k}\}$, process its blocks sequentially, and for each $B$ delete from $\mathcal{B}$ any $B'\neq B$ whose distance from $B$ is less than 4, a total of
at most $7^d-1$ blocks). By the above discussion on the independence of the events $E_B$ for such blocks, as well as our estimate on $\P(E_B)$, we deduce that for any large $n$
\[ \P(\Delta_{W_s}\cap B \neq \emptyset\mbox{ for all $B\in\mathcal{B}$}) \leq \left(\tfrac12 \log n\right)^{-6d \ell/7^d}\,.\]
Clearly, there are at most $n^d (6d+1)^{d \ell}$ sequences of blocks $\{B_{i_0},B_{i_1},\ldots,B_{i_\ell}\}$, where any two consecutive blocks are of distance (in blocks)
at most $3d$. Hence, the probability that there exists a sequence with the aforementioned properties \eqref{item-prop-1},\eqref{item-prop-2}
is at most
\begin{align*}
n^d \sum_{\ell \geq \ell_0} \left((6d+1)^{7d}  (\tfrac12\log n)^{-6}\right)^{\ell d /7^d} &<  n^d (\log n)^{-(5d/7^d) \ell_0} < n^{d - 7^{-d} \log \log n}
\end{align*}
for any large $n$, as required.
\end{proof}
Theorem~\ref{thm-xt-bound-a-sparse} is now obtained as an immediate corollary of Lemma~\ref{lem-xt-bound-aw} and Lemma~\ref{lem-sparse-prob},
with $\nu$ given by $\nu(\Delta) = \P(\{W_s : \Delta_{W_s} = \Delta\})$ for $\Delta\in\sparse$.  \end{proof}

\subsection{Proof of Theorem~\ref{thm-l1-l2}, Part~1: upper bound on the $L^1$ distance}\label{subsec-thm-l1l2-upper-bound}
Let $\Delta\subset \Z_m^d$ be a sparse set and $\cup_{i=1}^L A_i$ be its partition to components as per Definition~\ref{def-sparse-set}, where $L \leq m^d \,\wedge\,(n/\log^5 n)^d$.
Letting $\dist(\cdot,\cdot)$ denote the distance according to the lattice metric of $\Z_m^d$, put
\[A_i^+ = \{v:\dist(A_i,v) < \log^{3/2} n\}\,,\]
and (recalling that the diameter of $A_i$ is at most $\log^3 n$) let $\psi_i$ be an isometry of $A_i^+$ into a box $B_i$ of side-length $2\log^3 n$ inside $\Z_r^d$, the torus of side-length $r=3\log^3 n$.
A crucial point to notice is that the sets $\{A_i^+\}$ are pairwise disjoint, since the distance between distinct $A_i,A_j$ is at least $2d\log^2 n$.

Let $(X_t^*)$ denote the product chain of the Glauber dynamics run on each of these $L$ copies of $\Z_r^d$ independently, and let $\mu^*$ denote its stationary measure.
Given some initial configuration $x_0$ on $\Z_m^d$, we construct $x_0^*$ for $(X_t^*)$ in the obvious way:
For each $v\in A_i^+$, assign $x_0^*(\psi_i(v)) = x_0(v)$, while the configuration of each $\Z_r^d \setminus \psi_i(A_i^+)$ can be chosen arbitrarily (e.g., all-plus).

Let $t \leq \log^{4/3} n$. We claim that we can now couple $(X_s)$ with $(X_s^*)$ such that $X_s(\Delta) = X_s^*\left(\cup \psi_i(A_i)\right)$ for all $0 \leq s \leq t $ except with probability $n^{-10d}$.

Indeed, by repeating the argument of Lemma~\ref{lem-barrier-couple}, if we let $(X^*_s)$ use the same updates of $(X_s)$ on $\cup A^+_i$ and run independent updates elsewhere, then each $A_i$ can be coupled to $\psi_i(A_i)$ on $[0,t]$ except with probability $n^{-11d}$ (here too the distance between $A_i$ and $\partial A_i^+$ is at least $\log^{3/2} n$). Since the subsets $\{A_i^+\}$ are disjoint, distinct $\psi_i(A_i)$ indeed obtain independent updates in this manner. Summing the above error probability over the $L$ copies we deduce that the coupling exists except with probability $L n^{-11d} \leq n^{-10d}$.

We claim that by the strong spatial mixing property, for any large $n$
\begin{equation}
  \label{eq-mu-mu*-diff}
  \| \mu_{\Delta} - \mu^*_{(\cup \psi_i(A_i))}\|_\tv < n^{-9d}
\end{equation}after one translates
the sites in the obvious manner (that is, when comparing the two measures we apply the isometries $\psi_i$ on the inputs).

It is easy to infer the above statement under the assumption that there is strong spatial mixing $\mathtt{SM}(\Lambda,c_1,c_2)$ for any subset $\Lambda$
(so-called complete analyticity, as opposed to our assumption that strong spatial mixing $\mathtt{SM}(Q,c_1,c_2)$ holds for sufficiently large cubes).
Indeed, recall that for some fixed $c_1,c_2 > 0$ we have
$\|\mu^\tau_\Delta - \mu^{\tau^y}_\Delta\|_\tv \leq c_1 \exp(-c_2 \dist(\Delta,y))$ for all $\tau\in\{\pm1\}^{\partial\Lambda}$ and $y\in\partial\Lambda$.
By taking the boundary conditions $\tau_1$ and $\tau_2$ to be distributed
according to $\mu$ and $\mu^*$ about $\cup \partial A_i^+$ and $\cup\partial \psi_i(A_i^+)$ respectively,
we can transform $\tau_1$ into $\tau_2$ via a series of at most $m^d$ spin flips.
Each flip amounts to an error of at most $c_1 \exp(-c_2\log^{3/2} n)$, leading to \eqref{eq-mu-mu*-diff}.

Since we only have regular complete analyticity at hand, we will obtain \eqref{eq-mu-mu*-diff} via monotonicity and log-Sobolev inequalities.
Take $t = (\log n)^{4/3}$ and apply Theorems~\ref{thm-l2-sobolev} and \ref{thm-log-sobolev-torus} to get 
\begin{align*}
\left \| \P_{x_0}(X_t\in\cdot) - \mu \right \|_{\tv}  &\leq \|\P_{x_0} (X_t\in \cdot)-\mu\|_{L^2(\mu)} \\
& \leq \exp\left( 1-\lambda \left(t-\frac1{4 \sob}\log\log \mu^*(x_0)\right)\right)\,,
\end{align*}
where $\lambda,\sob$ are the spectral gap and log-Sobolev constant of $(X_t)$ respectively.
Since $\log\log (1/\mu^*(\sigma)) \geq (d+o(1)) \log n$ for all $\sigma$ and since it holds that $\lambda \geq \sob \geq \sobinf>0$, for a suitably large $n$ we have
\begin{align}\label{e:bigTorusMixSobolev1b}
\max_{x_0} \left \| \P_{x_0}(X_t\in\cdot) - \mu \right \|_{\tv}  &\leq  n^{-10d}\,.
\end{align}
It is well known (see, e.g., \cite{SaloffCoste}*{Lemma 2.2.11} and also \cite{AF}*{Chapter 8}) that the spectral gap (respectively log-Sobolev constant)
of a product chain is equal to the minimum of the spectral gaps (log-Sobolev constants).
In particular, the log-Sobolev constant of $(X_t^*)$ is at least $\sobinf$, hence similarly
\begin{equation}\label{e:toriMixb}
\max_{x_0^*} \left \| \P_{x_0^*}(X^*_t \in\cdot) - \mu^* \right \|_{\tv} \leq n^{-10d}\,.
\end{equation}
Therefore, if $Y$ is a configuration on $\Delta$ distributed according to $\mu_\Delta$ then
\begin{align*}
\big\| \mu_{\Delta} - \mu^*_{(\cup \psi_i(A_i))}\big\|_\tv&\leq \left \| \P(\psi(Y)\in\cdot) - \P_{x_0} (\psi(X_t(\Delta)) \in \cdot) \right \|_{\tv} \\
& +  \left \| \P_{x_0}( \psi(X_t(\Delta)) \in \cdot) - \P_{x_0^*} (X_t^*(\cup \psi_i(A_i)) \in \cdot)\right \|_{\tv} \\
&  + \left \| \P_{x_0^*} (X_t^*(\cup \psi_i(A_i)) \in \cdot) - \mu^*_{(\cup \psi_i(A_i))} \right \|_{\tv} < n^{-9d}\,,
\end{align*}
where the last inequality holds for large $n$, combining \eqref{eq:toriCoupling}, \eqref{e:bigTorusMixSobolev1b} and \eqref{e:toriMixb} with the fact
that projections can only reduce the total variation distance.

Altogether, letting $\Gamma = \cup B_i$ and abbreviating $\P_{x_0^*}(X_t^* \in \cdot)$ by $\pi^*$, we have
\begin{align}
\! \left\| \P_{x_0}\left(X_t(\Delta)\in\cdot\right) - \mu_{\Delta}\right\|_\tv
  &\leq 2n^{-9d}+  \left\| \P_{x_0^*}\left(X_t^*(\cup \psi_i(A_i))\in\cdot\right)-\mu^*_{(\cup \psi_i(A_i))}\right\|_\tv \nonumber\\
&\leq  2n^{-9d}+ \left\| \pi^*_\Gamma -\mu^*_\Gamma\right\|_\tv\,.\label{eq-Xt(A)-bound}
\end{align}
We next seek an upper bound for the last expression, uniformly over the initial configuration $x_0^*$.
\begin{align}
\left\|\pi_\Gamma^* -\mu^*_{\Gamma}\right\|_\tv &=  \frac12
\sum_{b_1,\ldots,b_L} \left| \pi_\Gamma^*(b_1,\ldots,b_L)-\mu^*_\Gamma(b_1,\ldots,b_L)\right|
=  \frac12 \E_{\mu^*_\Gamma}
\left|\frac{\pi_\Gamma^*}{\mu^*_\Gamma}-1\right| \nonumber\\
&\leq  \frac12 \bigg(\E_{\mu^*_\Gamma}
\left|\frac{\pi_\Gamma^*}{\mu^*_\Gamma}-1\right|^2\bigg)^{1/2}
= \frac12 \bigg(\E_{\mu^*_\Gamma}
\left|\frac{\pi_\Gamma^*}{\mu^*_\Gamma}\right|^2-1\bigg)^{1/2}\,,\label{eq-nu-pi-tv}
\end{align}
where the inequality was by Cauchy-Schwartz, and the last equality
is due to the fact that
$
 \E_{\mu^*_\Gamma} \left[\pi_\Gamma^*/ \mu^*_\Gamma\right] = 1
$
. Now, since $(X_t^*)$ is a product of $L$ independent instances of Glauber dynamics on $\Z_r^d$, we infer that
\begin{align*}
 \E_{\mu^*_\Gamma}
\left|\frac{\pi_\Gamma^*}{\mu^*_\Gamma}\right|^2 &= \E_{\mu^*_\Gamma} \prod_{i=1}^L \bigg|\frac{\pi^*_{B_i}}{\mu^*_{B_i}}\bigg|^2 =
\prod_{i=1}^L \Big(\left\|\pi^*_{B_i} - \mu^*_{B_i}\right\|_{L^2(\mu^*_{B_i})}^2+1\Big)\,,
\end{align*}
and recalling the definition of $\ltwo_t$ in \eqref{eq-mt-def}, it follows that
$ \E_{\mu^*_\Gamma}
\left|\pi_\Gamma^*/ \mu^*_\Gamma\right|^2 $ is at most $(\ltwo_t^2 +1)^L$.
Plugging this in \eqref{eq-nu-pi-tv},
\begin{align*}
\max_{x^*_0}\left\|\P_{x_0}\left(X_t^*(\Gamma)\in\cdot\right)-\mu^*_{\Gamma}\right\|_\tv &\leq  \frac12 \left(\left(\ltwo_t^2+1\right)^L-1\right)^{1/2} \\
 &\leq
 \frac12 \left(\exp\left(L \,\ltwo_t^2 \right) -1\right)^{1/2}\,.
\end{align*}
Altogether, using \eqref{eq-Xt(A)-bound}, we conclude that for any $\Delta \in \sparse$ and $t < \log^{4/3}n$,
\[
\max_{x_0}\left\|\P_{x_0}\left(X_t(\Delta)\in\cdot\right)-\mu_{\Delta}\right\|_\tv \leq  \frac12 \left(\exp\left(L \,\ltwo_t^2 \right) -1\right)^{1/2} +2n^{-9d}\,.
\]
At this point, Theorem~\ref{thm-xt-bound-a-sparse} implies that for any $s,t$ with $0<t < \log^{4/3}n$
and $ (10d/\sobinf) \log \log n \leq s \leq \log^{4/3}n$ and for every large $n$,
\begin{equation}  \label{eq-dtv-upper-bound}
\left\| \P_{x_0}(X_{t+s}\in\cdot)-\mu\right\|_\tv \leq \frac12 \left(\exp\left(L \,\ltwo_t^2 \right) -1\right)^{1/2} +3n^{-9d}\,.
\end{equation}
Recalling that $L \leq m^d \,\wedge\, (n/\log^5 n)^d$ concludes Part~1 of Theorem~\ref{thm-l1-l2} and also establishes the inequality in Remark~\ref{rem-upper-bound-m}.
\qed


\subsection{Proof of Theorem~\ref{thm-l1-l2}, Part~2: lower bound on the $L^1$ distance}\label{subsec-thm-l1l2-lower-bound}
Let
\begin{align*}
  r = 3\log^3 n \,,&\qquad
  L =  \lfloor n/r \rfloor^d \,,\end{align*}
and let $A_1,\ldots,A_{L} \subset \Z_n^d$ be a collection of $d$-dimensional boxes
of side-length $\frac23 r$ satisfying $\|u-v\|_\infty > r/3$ for any $u\in A_i$ and $v\in A_j$ with $i\neq j$.
Similar to the notation of the previous subsection, for each $i\in\{1,\ldots,L\}$ we define
\[A_i^+ = \{v:\dist(A_i,v) \leq r/6\}\,.\]
Denote the unions of these boxes by $\Delta = \cup_{i=1}^{L} A_i$ and $\Delta^+ = \cup_{i=1}^{L} A_i^+$.

Let $B^+_1,\ldots,B^+_{L}$ be a sequence of disconnected $d$-dimensional boxes of side-length
$r$ and let $\Gamma^+$ denote the graph of their union.
Let $\psi_i$ be an isometry mapping $A_i^+$ to $B^+_i$ and let $\psi$ be the isometry that maps $\Delta^+$ to $\Gamma^+$
such that its restriction to any individual $A^+_i$ is $\psi_i$.
We define $B_i=\psi(A_i)$ and $\Gamma = \cup_{i=1}^{L} B_i$.
For a configuration $X$ on some $\Delta' \subseteq \Delta^+$ we let will denote $\psi(X)$ as the corresponding configuration on $\psi(\Delta')$.

We couple the Glauber dynamics on $\Z_n^d$ and $\Gamma^+$ as follows:
Whenever a site $u\in \Delta^+$ receives an update via some unit variable $I$ we also update the site $\psi(u)$ using the same $I$ and a periodic boundary condition
on its corresponding box $B_i^+$.
Denote the dynamics induced on $\Gamma^+$ as $X^*_t$, and let $\mu^*$ be its stationary distribution.  The above defined coupling satisfies the following claim.
\begin{claim}\label{cl:torusCoupling}
Let $(X_t)$ and $(X^*_t)$ be the above coupled Glauber dynamics on $\Z_n^d$ and $\Gamma^+$ respectively.
Suppose $X_0,X^*_0$ satisfy $\psi(X_0(\Delta^+)) = X_0^*(\Gamma^+)$. Then with probability at least $1-n^{-10d}$,
for all $0\leq s \leq  (\log n)^{4/3}$ we have
\begin{equation}\label{eq:toriCoupling}
\psi(X_s(\Delta))= X_s^*(\Gamma)\,.
\end{equation}
Furthermore,
\[
\max_{x_0} \left \| \P_{x_0}(X_s \in \cdot) - \mu \right \|_{\tv} \geq \max_{x_0^*}  \left \| \P_{x_0^*}(X_s^*(\Gamma) \in \cdot) - \mu^*_{\Gamma} \right \|_{\tv} - 4n^{-10d} \,.
\]
\end{claim}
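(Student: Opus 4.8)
The plan is to establish the coupling statement \eqref{eq:toriCoupling} first and then read off the total-variation inequality from it via a projection argument together with the log-Sobolev mixing bounds already derived. For \eqref{eq:toriCoupling} I would repeat, nearly verbatim, the argument of Lemma~\ref{lem-barrier-couple}. First note the coupling is unambiguous because the sets $A_i^+$ are pairwise disjoint: if $u\in A_i$, $v\in A_j$ with $i\neq j$ then $\|u-v\|_\infty>r/3$, so no point can lie within distance $r/6$ of both $A_i$ and $A_j$; hence each site of $\Delta^+$ lies in a unique $A_i^+$ and drives updates of a unique $B_i^+$. Under this coupling the dynamics restricted to $B_i^+$ agrees with the dynamics restricted to $A_i^+\subset\Z_n^d$ until a discrepancy propagates from the periodic seam of $B_i^+$ (equivalently, from $\partial A_i^+$) into the central box $B_i$. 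Since $\dist(A_i,\partial A_i^+)\geq r/6=\tfrac12\log^3 n$, such an event up to time $t_0:=(\log n)^{4/3}$ requires a path of $\ell\geq\tfrac12\log^3 n$ adjacent sites updated in increasing time order before $t_0$; summing $(2d)^\ell\,\P(\Po(t_0)\geq\ell)$ over $\ell$ and then over the at most $n^d$ boxes and over the interval $(0,t_0)$ --- exactly as in Lemma~\ref{lem-barrier-couple}, using $t_0=o(\ell)$ --- bounds the failure probability by $n^{-10d}$, uniformly over $0\leq s\leq t_0$.

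For the total-variation inequality, fix a configuration $x_0^*$ on $\Gamma^+$ attaining $\max_{x_0^*}\|\P_{x_0^*}(X_s^*(\Gamma)\in\cdot)-\mu^*_\Gamma\|_\tv$, and choose $x_0$ on $\Z_n^d$ with $\psi(x_0(\Delta^+))=x_0^*(\Gamma^+)$ (defined arbitrarily off $\Delta^+$). Since projecting onto $\Delta$ cannot increase total variation and $\psi$ restricts to a bijection from configurations on $\Delta$ to configurations on $\Gamma$,
\[
\|\P_{x_0}(X_s\in\cdot)-\mu\|_\tv \;\geq\; \big\|\P_{x_0}(\psi(X_s(\Delta))\in\cdot) - \mu_\Delta\circ\psi^{-1}\big\|_\tv ,
\]
and then the triangle inequality combined with \eqref{eq:toriCoupling} gives
\[
\|\P_{x_0}(X_s\in\cdot)-\mu\|_\tv \;\geq\; \|\P_{x_0^*}(X_s^*(\Gamma)\in\cdot)-\mu^*_\Gamma\|_\tv - n^{-10d} - \big\|\mu_\Delta\circ\psi^{-1}-\mu^*_\Gamma\big\|_\tv .
\]
It therefore remains to show that the pushforward $\mu_\Delta\circ\psi^{-1}$ is within $3n^{-10d}$ of $\mu^*_\Gamma$.

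This last bound I would obtain exactly as for \eqref{eq-mu-mu*-diff}, using monotonicity and log-Sobolev inequalities rather than a direct telescoping over strong spatial mixing. Applying Theorems~\ref{thm-l2-sobolev} and \ref{thm-log-sobolev-torus} at time $t_0=(\log n)^{4/3}$, and noting that $\mu^*$ is a product of Gibbs measures on tori of side-length $r$ and so also has log-Sobolev constant at least $\sobinf$, the computation behind \eqref{e:bigTorusMixSobolev1b} yields $\max_{x_0}\|\P_{x_0}(X_{t_0}\in\cdot)-\mu\|_\tv\leq n^{-10d}$ and $\max_{x_0^*}\|\P_{x_0^*}(X_{t_0}^*\in\cdot)-\mu^*\|_\tv\leq n^{-10d}$. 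Projecting onto $\Delta$ and $\Gamma$ and comparing through \eqref{eq:toriCoupling} at $s=t_0$, the triangle inequality gives $\|\mu_\Delta\circ\psi^{-1}-\mu^*_\Gamma\|_\tv\leq 3n^{-10d}$, and plugging this into the display above produces the asserted bound with total slack $4n^{-10d}$.

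There is no genuinely new difficulty here: the speed-of-propagation estimate is the one already used in Lemma~\ref{lem-barrier-couple}, and the comparison of stationary measures recycles the log-Sobolev argument behind \eqref{eq-mu-mu*-diff} --- which is precisely what lets us make do with strong spatial mixing on cubes (regular complete analyticity) rather than on arbitrary subsets. The only points requiring care are bookkeeping: that $\psi$ really is an isometry onto $\Gamma^+$ and induces a bijection on configuration spaces (so pushforwards are innocuous), that the $A_i^+$ are disjoint so the coupling is well-defined, and that the several $n^{-cd}$ error terms sum to at most $4n^{-10d}$ for all large $n$.
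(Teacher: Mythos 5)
Your proposal is correct and follows essentially the same route as the paper: the coupling statement via the disagreement-percolation estimate of Lemma~\ref{lem-barrier-couple} (using that each $B_i$ is at distance $r/6$ from $\partial B_i^+$ and that the $A_i^+$ are disjoint), and the total-variation inequality by running both chains to time $(\log n)^{4/3}$, invoking Theorems~\ref{thm-l2-sobolev} and~\ref{thm-log-sobolev-torus} to place each within $n^{-10d}$ of stationarity, and comparing $\mu_\Delta\circ\psi^{-1}$ with $\mu^*_\Gamma$ by the triangle inequality. The bookkeeping of the error terms matches the paper's $4n^{-10d}$ slack, so no further changes are needed.
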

\begin{proof}
Equation~\eqref{eq:toriCoupling} holds by a simple adaption of the proof of Lemma~\ref{lem-barrier-couple} as the initial conditions and updates agree on
$A_i^+$ and $B_i^+$ for all $i$, and since each $B_i$ is distance $r/6 = \frac12\log^3 n$ from the boundary of $B_i^+$.

For the second statement of the claim, we repeat the argument that yielded inequality~\eqref{eq-mu-mu*-diff} in the previous subsection.
Take $t = (\log n)^{4/3}$ and apply Theorems~\ref{thm-l2-sobolev} and \ref{thm-log-sobolev-torus} to get 
\begin{align*}
\left \| \P_{x_0}(X_t\in\cdot) - \mu \right \|_{\tv}  &\leq 
\exp\left( 1-\lambda \left(t-\frac1{4 \sob}\log\log \mu^*(x_0)\right)\right)\,,
\end{align*}
where $\lambda$ and $\sob$ are the spectral gap and log-Sobolev constant of $(X_t)$ respectively. This yields the following for any sufficiently large $n$:
\begin{align}\label{e:bigTorusMixSobolev1}
\max_{x_0} \left \| \P_{x_0}(X_t\in\cdot) - \mu \right \|_{\tv}  &\leq  n^{-10d}\,.
\end{align}
Since the log-Sobolev constant of the dynamics on $\Gamma^+$ is at least $\sobinf$,
\begin{equation}\label{e:toriMix}
\max_{x_0^*} \left \| \P_{x_0^*}(X^*_t \in\cdot) - \mu^* \right \|_{\tv} \leq n^{-10d}\,.
\end{equation}
Now, if $Y \in \{\pm1\}^\Delta$ is distributed according to $\mu_\Delta$ then
\begin{align*}
\left \|  \P(\psi(Y) \in \cdot)  - \mu^*_{\Gamma}\right \|_{\tv}&\leq \left \| \P(\psi(Y)\in\cdot) - \P_{x_0} (\psi(X_t(\Delta)) \in \cdot) \right \|_{\tv} \\
& +  \left \| \P_{x_0}( \psi(X_t(\Delta)) \in \cdot) - \P_{x_0^*} (X_t^*(\Gamma) \in \cdot)\right \|_{\tv} \\
&  + \left \| \P_{x_0^*} (X_t^*(\Gamma) \in \cdot) - \mu^*_{\Gamma} \right \|_{\tv} \leq 3n^{-10d}\,,
\end{align*}
where the last inequality added \eqref{eq:toriCoupling}, \eqref{e:bigTorusMixSobolev1} and \eqref{e:toriMix} to the fact
that projections can only reduce the total variation distance. It follows that
\begin{align*}
\max_{x_0} \left \| \P_{x_0}(X_s \in \cdot) - \mu \right\|_{\tv} &\geq \max_{x_0} \left \| \P_{x_0}( \psi(X_s(\Delta))\in\cdot) -  \P(\psi(Y)\in\cdot) \right \|_{\tv} \\
&\geq \max_{x_0^*}  \left\|  \P_{x_0^*}(X_s^*(\Gamma)\in \cdot) - \mu^*_{\Gamma} \right \|_{\tv} - 4n^{-10d}\,,
\end{align*}
where we used the facts that
$\left \|  \P(\psi(Y) \in \cdot)  - \mu^*_{\Gamma}\right \|_{\tv} \leq 3n^{-10d}$ and that $\|  \P(\psi(X_s(\Delta))\in\cdot) - \P_{x_0^*}(X_s^*(\Gamma)\in \cdot)  \|_{\tv} \leq n^{-10d}$.
\end{proof}

Recall that the $B_i$ boxes have side-length $\frac23 r = 2\log^3 n$, matching the boxes $B$ in the definition \eqref{eq-mt-def} of $\ltwo_t$.
Let $x^*_0=x^*_0(t)$ be a configuration on a box $B$ of side-length $3\log^3 n$ which achieves $\ltwo_t$, i.e.\
\[
  \ltwo_t = \left\| \P_{x_0^*}\big(X^*_t(B) \in \cdot \big)  - \mu^*_{B} \right\|_{L^2(\mu^*_{B})}\,.
\]
We define i.i.d.\ random variables
\begin{equation}
Y_i = \frac{ \P\left(X_t^*(B_i) =U_i \mid X_0^*(B^+_i)=x_0^*\right) }{ \mu^*_{B_i}(U_i)}\,,
\end{equation}
where the $U_i$ are i.i.d.\ configurations on $B_i$ distributed according to $\mu^*_{B_i}$.  As the dynamics on different tori are independent it follows that the $Y_i$ are independent.
As we will soon show, these random variables provide crucial insight into the mixing of $(X_t)$ in the $L^1$-distance. First we need to obtain some estimates on their moments.  Clearly,
\[
\E Y_i = \sum_{b_i} \frac{ \P\left(X_t^*(B_i) =b_i \mid X_0^*(B^+_i)=x_0^*\right) }{ \mu^*_{B_i}(b_i)}  \mu^*_{B_i}(b_i) = 1\,,
\]
and
\begin{align*}
\var Y_i &= \sum_{b_i} \bigg | \frac{ \P\left(X_t^*(B_i) =b_i \mid X_0^*(B^+_i)=x_0^*\right) }{ \mu^*_{B_i}(b_i)}	  -1 \bigg|^2 \mu^*_{B_i}(b_i)  = \ltwo_t^2 \,.
\end{align*}
Moreover, by a standard $L^\infty$ to $L^2$ reduction (cf., e.g., \cite{SaloffCoste}),
\begin{align*}
\|Y_i-1\|_\infty &=
 \| \P\left(X_t^*(B_i) \in \cdot \mid X_0^*(B_i^+)=x_0^*\right) -  \mu^*_{B_i} \|_{L^\infty(\mu^*_{B_i})} \\
& \leq  \| \P\left(X^*_{t/2}(B_i) \in \cdot \mid X_0^*(B_i^+)=x_0^*\right)  -  \mu^*_{B_i} \|^2_{L^2( \mu^*_{B_i})}\,,
\end{align*}
and hence by Theorems~\ref{thm-l2-sobolev} and \ref{thm-log-sobolev-torus} this is at most $\mathrm{e}^{-c \log\log n}$ for some absolute constant $c>0$, yielding
\begin{align*}
\E |Y_i - 1|^3 \leq  \|Y_i - 1\|_\infty  \var Y_i \leq  \mathrm{e}^{- c\log\log n} \ltwo_t^2 = o(\ltwo_t^2)\,.
\end{align*}
Define $Z_i = \log Y_i$. Taking Taylor series expansions gives
\begin{align*}
  \E Z_i &= \E (Y_i - 1) - \tfrac12 \E (Y_i-1)^2 + O\left( \E |Y_i-1|^3  \right) = -\tfrac{1-o(1)}2 \ltwo_t^2\,,
\end{align*}
and similarly,
\begin{align*}
  \E Z_i^2 &= \E  (Y_i - 1)^2 + O\left( \E |Y_i-1|^3  \right) = (1+o(1)) \ltwo_t^2\,.
\end{align*}

We now derive the required lower bound on the $L^1$-distance of $(X_t)$ from stationarity
at time $t \geq (20d/\sobinf)\log\log n$, provided that
$\lim_{n\to\infty} L \ltwo^2_t = \infty$.
Consider a starting configuration $X_0$ such that $\psi(X_0(A_i^+))=x_0^*$ for all $i$, and
similarly choose $X_0^*$ so that $X_0^*(B_i^+)=x_0^*$ for all $i$.
By Claim~\ref{cl:torusCoupling} it is sufficient to show that under our hypothesis
\[
\left\| \P_{x_0}( X^*_t(\Gamma) \in \cdot) - \mu^*_{\Gamma} \right \|_\tv \rightarrow 1\,.
\]
By the definition $Y_i$'s,
\begin{align*}
 & \left\|\P(  X^*_t(\Gamma) \in \cdot) - \mu^*_{\Gamma}\right \|_\tv \\
  &\qquad = \frac12
\sum_{b_1,\ldots,b_{L}} \left| \P_{x_0}\left(X_t^*(B_1)=b_1,\ldots,X_t^*(B_{L})=b_{L}\right)-\mu^*_{\Gamma}\left(b_1,\ldots,b_{L}\right)\right| \\
& \qquad = \frac12 \sum_{b_1,\ldots,b_L}
\bigg| \prod_{i=1}^{L} \frac{\P_{x_0} (X_t^*(B_i)=b_i)}{\mu_{B_i}^*(b_i)}-1\bigg| \prod_{i=1}^{L} \mu_{B_i}^*(b_i) \\
& \qquad = \frac12 \E \bigg| \prod_{i=1}^{L} Y_i -1\bigg|
 = \E \bigg|  \exp\bigg(\sum_{i=1}^{L} Z_i \bigg) -1\bigg|^{-} \,,
\end{align*}
where $|a|^{-}$ denotes $\max\{-a,0\}$.  The random variables $Z_i$ are independent, $\|Z_i\|_\infty =o(1)$ and $\sum_{i=1}^{L} \var Z_i\rightarrow \infty$,
hence the Central Limit Theorem implies that \[\lim_{n\to\infty} \frac{\sum_{i=1}^{L} \left(Z_i -\E Z_i \right)}
{\sqrt {L\ltwo_t^2}} = \mathcal{N}(0,1)\,.\]
Since $L \E Z_1 / \sqrt{L \ltwo_t^2} = -\frac{1-o(1)}2\sqrt{L \ltwo_t^2 } \to -\infty$, it then follows that $\sum_{i=1}^{L} Z_i$ converges in probability to $-\infty$
and therefore \[ \E \bigg|  \exp\bigg(\sum_{i=1}^{L} Z_i \bigg) -1\bigg|^{-} \rightarrow 1\,,\]
as required.
\qed

\section{Cutoff for the Ising model}\label{sec:mainthms}
In this section we prove the main results, Theorems~\ref{mainthm-Z2} and \ref{mainthm-Zd}. We first describe how the $L^1$-$L^2$ reduction from the previous section
(Theorem~\ref{thm-l1-l2}) establishes the existence of cutoff. A refined analysis of the $L^2$ distance to stationarity
 (captured by the quantity $\ltwo_t$ in the aforementioned theorem) via log-Sobolev inequalities then
allows us to pinpoint the precise location of the cutoff in terms of spectral gaps of the dynamics on tori of prescribed sizes.
Finally, by applying this tool on tori of varying sizes we obtain as a biproduct that, as the tori side-length tends to infinity, these spectral gaps tend to
$\lambda_\infty$, the spectral gap of the dynamics on the infinite-volume lattice. In turn, this yields the asymptotics of the mixing time in terms of $\lambda_\infty$
and establishes Theorem~\ref{mainthm-spectral}.

\subsection{Existence of cutoff}
Theorem~\ref{thm-l1-l2} already establishes cutoff for the Ising model on $(\Z/n\Z)^d$, although not its precise location.
To see this, recall the definition of $\ltwo_t$ given in \eqref{eq-mt-def} and choose $t^*$ as follows
\[ t^* = \inf\bigg\{ t :  \ltwo_{t}^2 \leq \frac{\log^{3d+1}}{n^d} \bigg\}\,.\]
As before, the log-Sobolev inequalities of Theorems~\ref{thm-l2-sobolev} and \ref{thm-log-sobolev-torus} imply that $t^*=O(\log n)$. Let $s = (10d/\sobinf) \log\log n$. Since $\ltwo_t$ is a continuous function, we have
$(n/\log^5 n)^d \ltwo_{t^*}^2 = \log^{1-2d} n=o(1)$, and so by Part~1 of Theorem~\ref{thm-l1-l2}
\[
   \max_{x_0}\left\| \P_{x_0}(X_{t^*+s}\in\cdot)-\mu\right\|_\tv \leq
 \frac12 \left(\exp\big(\log^{1-2d} n\big) -1\right)^{1/2} + 6n^{-9d} = o(1)\,.
\]
Next, the results of \cite{HaSi} imply that the $L^1$ mixing time of the Glauber dynamics for the Ising model on $(\Z/n\Z)^d$ has
order at least $\log n$, hence (by the above inequality) $t^*(n)$ is also of order at least $\log n$.
In particular, $t^* \geq (20d/\sob)\log\log n$ for any sufficiently large $n$, and since by definition $(n/\log^3 n)^d \ltwo_{t^*}^2 = \log n$, it follows from Part~2 of Theorem~\ref{thm-l1-l2} that
\[\max_{x_0} \left\| \P_{x_0}(X_{t^*}\in \cdot) - \mu \right\|_{\tv} = 1-o(1)\,.\]
This establishes cutoff at $t^*$ with a window of $O(\log\log n)$. \qed

\subsection{Cutoff location (asymptotics of the mixing time)}
To obtain the asymptotics of the mixing time, it remains to estimate the parameter $t^*$ introduced above,
that is, to understand the threshold $t(n)$ for $n^d \ltwo_t^2$ to tend to infinity faster than some poly-logarithmic function of $n$.

In what follows, let $\lambda(r),\sob(r)$ be the spectral gap and log-Sobolev constant of the Glauber dynamics on a $d$-dimensional torus of side-length $r$. 
\begin{lemma}\label{lem-L2-bounds}
Set $c_0 = \frac{12d}{\sobinf}$, let $\frac{20d}{\sobinf\gapinf} \log\log n \leq t \leq \log^{4/3} n$ and $r = 3\log^3 n $.
For $n$ sufficiently large,
\begin{align}\label{eq-L2-bounds}
 \mathrm{e}^{-\lambda(r)t-c_0\log\log n} - n^{-9d} \leq\ltwo_t \leq  \mathrm{e}^{-\lambda(r) t+c_0\log\log n}\,.
\end{align}
\end{lemma}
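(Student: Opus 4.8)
The plan is to relate $\ltwo_t$, which measures the $L^2$-distance from stationarity of the projection of the torus dynamics $(X^*_t)$ on $\Z_r^d$ onto a box $B$ of side-length $2\log^3 n$, to the full $L^2$-distance of $(X^*_t)$ from its stationary measure $\mu^*$. Since $B$ occupies a constant fraction of $\Z_r^d$, projecting onto $B$ should lose only a polylogarithmic factor, and the full $L^2$-distance is governed by the spectral gap $\lambda(r)$ together with an additive $\frac1{4\sob(r)}\log\log(1/\mu^*(x_0))$ correction from Theorem~\ref{thm-l2-sobolev}. Since $|\Z_r^d| = (3\log^3 n)^d$, we have $\log\log(1/\mu^*(x_0)) = (d+o(1))\log\log n$, so that correction is $O(\log\log n)$ after using $\sob(r) \geq \sobinf$; this is exactly where the $c_0 \log\log n$ slack in the statement comes from.

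The key steps, in order: (i) \emph{Upper bound.} Start from $\ltwo_t = \max_{x_0}\|\P_{x_0}(X^*_t(B)\in\cdot) - \mu^*_B\|_{L^2(\mu^*_B)}$; since projecting a measure onto a sub-$\sigma$-algebra does not increase its $L^2(\mu^*)$-norm relative to $\mu^*$ (this is the standard fact that conditional expectation is an $L^2$-contraction, applied to the density $\P_{x_0}(X^*_t\in\cdot)/\mu^*$), we get $\ltwo_t \leq \max_{x_0}\|\P_{x_0}(X^*_t\in\cdot) - \mu^*\|_{L^2(\mu^*)}$. Now apply Theorem~\ref{thm-l2-sobolev} with $\nu = \mu^*$, $\lambda = \lambda(r)$, $\sob = \sob(r) \geq \sobinf$, and $\log\log(1/\mu^*(x_0)) \leq (d+o(1))\log\log n$, to bound the right-hand side by $\exp(1 - \lambda(r)(t - \frac{(d+o(1))}{4\sobinf}\log\log n)) \leq \mathrm{e}^{-\lambda(r)t + c_0\log\log n}$ for large $n$, using $\lambda(r)\le 1$ and choosing $c_0 = 12d/\sobinf$ with room to spare. (ii) \emph{Lower bound.} Here I would run the $L^2$-distance backwards: by reversibility and the spectral decomposition, $\|\P_{x_0}(X^*_t\in\cdot)-\mu^*\|_{L^2(\mu^*)} \geq (\text{something}) \,\mathrm{e}^{-\lambda(r)t}$, but more usefully I would pick $x_0$ equal to the configuration achieving $\ltwo_t$ and exploit that the $L^2$-distance of the \emph{full} chain cannot be much larger than that of its projection onto $B$ plus the contribution of the complement $\Z_r^d\setminus B$ — but that reasoning is awkward. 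Instead, a cleaner route: bound $\ltwo_t$ below by a single Fourier-type coefficient. Choose $x_0$ to be (essentially) the worst starting state for the spectral gap; then $\|\P_{x_0}(X^*_t(B)\in\cdot) - \mu^*_B\|_{L^2(\mu^*_B)} \geq |\E_{x_0} f(X^*_t(B))|$ for any mean-zero $f \in L^2(\mu^*_B)$ with $\|f\|_{L^2(\mu^*_B)} = 1$; take $f$ to be (the $\mu^*_B$-conditional expectation of) the top nonconstant eigenfunction $\phi$ of the generator, which has eigenvalue $-\lambda(r)$, so $\E_{x_0}\phi(X^*_t) = \mathrm{e}^{-\lambda(r)t}\phi(x_0)$. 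One then needs $\phi$ to have nontrivial correlation with a function of $X(B)$ only and $\phi(x_0)$ bounded below; by translation invariance of the torus one can choose the eigenfunction (or a translate/average) supported in a way that its projection onto $B$ has $L^2$-norm bounded below by a constant, and $|\phi(x_0)| \geq \mathrm{e}^{-O(\log\log n)}$ for a suitable $x_0$ since $\mu^*$ puts mass at least $\mathrm{e}^{-(d+o(1))\log\log n \cdot \log n}$... — this last point needs care, which is why the $n^{-9d}$ error term is subtracted: for states of very small stationary mass one cannot guarantee a good lower bound, so one absorbs such cases into the error. Finally use $\sob(r)\geq\sobinf$ once more to turn the $O(\log\log n)$ loss into the $-c_0\log\log n$ in the exponent.

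The main obstacle I expect is the lower bound in step (ii): upper-bounding an $L^2$-distance via log-Sobolev is routine, but producing a clean matching lower bound $\ltwo_t \gtrsim \mathrm{e}^{-\lambda(r)t}$ requires exhibiting a test function that is (a) a near-eigenfunction of the \emph{torus} generator with eigenvalue close to $-\lambda(r)$, (b) essentially a function of the spins in $B$ (so it survives the projection), and (c) evaluated at a starting configuration of not-too-small stationary measure so the $\log\log$ loss is controlled. Reconciling (a) and (b) uses that the principal eigenfunction of a reversible chain on a torus can be taken to have, by translation-averaging, bounded projection onto any constant-fraction box; this is the delicate geometric/spectral input, and getting the constants to line up with $c_0 = 12d/\sobinf$ (rather than merely $O(\log\log n)$) is where the ``room to spare'' in the statement is spent. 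The constraint $t \geq \frac{20d}{\sobinf\gapinf}\log\log n$ is used precisely to ensure $\mathrm{e}^{-\lambda(r)t+c_0\log\log n}$ dominates the additive $n^{-9d}$ error on the left, making the two-sided bound meaningful; and $t \leq \log^{4/3}n$ keeps us in the regime where Theorem~\ref{thm-l1-l2}'s coupling estimates and the relation $r = 3\log^3 n$ are consistent.
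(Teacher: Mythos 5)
Your upper bound is fine and is essentially the paper's argument: projection onto $B$ contracts the $L^2(\mu^*)$-distance, then Theorem~\ref{thm-l2-sobolev} with $\sob(r)\ge\sobinf$, $\lambda(r)\le 1$ and $\log\log(1/\mu_r^*(\sigma))=(3d+o(1))\log\log n$ gives $\ltwo_t\le \mathrm{e}^{-\lambda(r)t+c_0\log\log n}$. The lower bound, however, has a genuine gap. Your plan hinges on producing a test function that is simultaneously a near-eigenfunction of the torus generator with eigenvalue $-\lambda(r)$ and essentially a function of the spins in $B$, with the further requirements that its conditional expectation onto $\sigma(B)$ retain $L^2(\mu^*_B)$-norm bounded below and that $|\phi(x_0)|\ge \mathrm{e}^{-O(\log\log n)}$ for an admissible $x_0$. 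None of this is established: translation-averaging does not obviously preserve the eigenfunction property or prevent cancellation, $\E_{x_0}[\phi(X^*_t)]=\mathrm{e}^{-\lambda(r)t}\phi(x_0)$ does not pass to $\E_{x_0}[g(X^*_t(B))]$ for the projected function $g=\E_{\mu^*}[\phi\mid\sigma(B)]$, and no argument is given that this projection is not nearly annihilated. You flag this yourself as the delicate point, but it is precisely the content of the lower bound, so the proposal does not prove \eqref{eq-L2-bounds}. (Your reading of the error terms is also off: the $n^{-9d}$ is not an allowance for initial states of tiny stationary mass, and the hypothesis $t\ge\frac{20d}{\sobinf\gapinf}\log\log n$ is used to force $r^{d/2}\ltwo_t=o(1)$, not merely to make the main term dominate.)

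The paper sidesteps eigenfunction localization entirely. It starts from the elementary spectral lower bound on the \emph{whole} torus chain, $\mathrm{e}^{-\lambda(r)t}\le 2\|\P_{x_0}(X^*_t\in\cdot)-\mu^*\|_\tv$, and then bounds this total-variation distance from above by applying Part~1 of Theorem~\ref{thm-l1-l2} to the torus $\Z_r^d$ itself (Remark~\ref{rem-upper-bound-m} with $m=r$; the box $B$ in \eqref{eq-mt-def} is the same box). With $s=(10d/\sobinf)\log\log n$ this gives $\mathrm{e}^{-\lambda(r)(t+s)}\le\bigl(\exp(r^d\ltwo_t^2)-1\bigr)^{1/2}+6n^{-9d}\le 2r^{d/2}\ltwo_t+6n^{-9d}$, where the last step uses $r^d\ltwo_t^2=o(1)$, which follows from the already-proved upper bound and the lower bound on $t$. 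Rearranging yields $\ltwo_t\ge \mathrm{e}^{-\lambda(r)t-c_0\log\log n}-n^{-9d}$, since the losses $\lambda(r)s$ and $\log(2r^{d/2})$ are both $O(\log\log n)$. If you want to salvage your route you would need to prove the eigenfunction-localization claim, which is a substantial spectral-geometric statement in its own right; the paper's self-application of the $L^1$--$L^2$ reduction is what makes the lemma cheap.
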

\begin{proof}
Let $X^*_t$ denote the Glauber dynamics on $\Z_r^d$ with periodic boundary conditions, let $\mu^*_r$ be its stationary distribution and let $\Omega^*_r$ denote its state space.
Since $\log\log (1/\mu^*_r(\sigma)) \geq (3d+o(1))\log \log n$ for all $\sigma \in \Omega^*_r$ and since $\lambda(r) \leq 1$ (vertices are updated at rate 1),
another application of Theorem~\ref{thm-l2-sobolev} 
implies that for large $n$
\begin{align}
\ltwo_t &\leq  \max_{\sigma\in\Omega_r^*} \exp\left( 1-\lambda(r)\left(t-\frac1{4 \sob(r)}\log\log \left(1/\mu_r^*(\sigma)\right)\right)\right) \nonumber\\
&\leq  \mathrm{e}^{-\lambda(r) t+\frac{3d+o(1)}{4\sobinf}\log\log n} \leq  \mathrm{e}^{-\lambda(r) t+c_0\log\log n}\,.
\label{e:bigTorusMixSobolev2}
\end{align}
This establishes the upper bound on $\ltwo_t$. Further, as $t \geq (20d/\sobinf\gapinf)\log\log n$
it follows that $r^{d/2}\ltwo_t \leq \log^{-6d} n = o(1)$.

A standard lower bound on the total variation distance in terms of the spectral gap (cf.\ its discrete-time analogue \cite{LPW}*{equation (12.13)}) gives that
\[
\mathrm{e}^{-\lambda(r) t} \leq 2 \left \| \P(X^*_t \in\cdot)- \mu^*\right \|_\tv \mbox{ for all $t>0$}\,.
\]
Set $s = (10d/\sobinf)\log\log n$. Applying Part~1 of Theorem~\ref{thm-l1-l2} to $\Z_r^d$ with these $s,t$ (recalling Remark~\ref{rem-upper-bound-m} and
plugging in $m=r$ in \eqref{e:l1-l2varyingm}) gives
\begin{align}\label{e:l2UpperBound}
  \mathrm{e}^{-\lambda(r) (t+s) } &\leq 2  \left \| \P(X^*_{t+s} \in \cdot) - \mu^*\right \|_\tv  \nonumber \\
  &\leq  \left(\exp\left({r^d\ltwo_t^2}\right) -1\right)^{1/2}  + 6n^{-9d}  \leq 2r^{d/2}\ltwo_t + 6n^{-9d}\,,
\end{align}
where the last inequality used the fact that for $x<1$ we have $\mathrm{e}^{x}-1\leq 2x$ and $r^d \ltwo_t^2 = o(1)$.
Rearranging equation \eqref{e:l2UpperBound} we have that
\begin{align}\label{e:l2UpperBound2}
\ltwo_t \geq   \mathrm{e}^{-\lambda(r)(t+s) -\log(2r^{d/2})}  -   \frac{3n^{-9d}}{r^{d/2}} \geq \mathrm{e}^{-\lambda(r)t - c_0\log\log n}  -   n^{-9d}
\end{align}
Combining equations~\eqref{e:bigTorusMixSobolev2} and~\eqref{e:l2UpperBound} completes the proof.
\end{proof}
The following theorem now establishes the position of the mixing time in terms of $\lambda(r)$ with a window of $O(\log\log n)$.
\begin{theorem}\label{t:cutoffLocation}
Let $(X_t)$ the Glauber dynamics on the $(\Z/n\Z)^d$, and set
\begin{align*}
t^* = t^*(n) &= \frac{d}{2\lambda(r)}\log n\,,\\
t^-_n = t^* - \frac{15d}{\sobinf\gapinf}\log\log n&\,,\qquad  t^+_n = t^* + \frac{25d}{\sobinf\gapinf}\log\log n\,.
\end{align*}
The following then holds: \begin{align*}
\max_{x_0} \left\| \P_{x_0} (X_{t_n^-}\in \cdot) - \mu\right\|_{\tv} \to 1\,,\\
\max_{x_0} \left\| \P_{x_0} (X_{t_n^+}\in\cdot) - \mu\right\|_{\tv}  \to  0\,.
\end{align*}
\end{theorem}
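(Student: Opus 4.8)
The plan is to feed the two-sided estimate on $\ltwo_t$ from Lemma~\ref{lem-L2-bounds} into the two parts of Theorem~\ref{thm-l1-l2}, once with the large torus $(\Z/n\Z)^d$ and once with the auxiliary torus $\Z_r^d$ (where $r=3\log^3 n$), and read off the window. The crux is that both the threshold time $t^*$ and the cutoff window are governed entirely by the single quantity $\lambda(r)$, because $\ltwo_t$ decays like $\mathrm{e}^{-\lambda(r)t}$ up to a multiplicative $\mathrm{e}^{O(\log\log n)}$ error, and the $L^1$-$L^2$ reduction costs only a factor polynomial in $\log n$ in the argument of the exponential. Concretely, $n^d\ltwo_t^2\asymp n^d\mathrm{e}^{-2\lambda(r)t}$ up to $\mathrm{e}^{O(\log\log n)}$, so $n^d\ltwo_t^2$ transitions from $\to\infty$ to $\to 0$ as $t$ crosses $\frac{d}{2\lambda(r)}\log n$ within a window of $O\big(\tfrac1{\lambda(r)}\log\log n\big)=O\big(\tfrac1{\gapinf}\log\log n\big)$, and I would choose the constants $15$ and $25$ (against $\sobinf\gapinf$) generously enough to absorb the $c_0\log\log n$ slack from Lemma~\ref{lem-L2-bounds} together with the $\log$-factors from the $L^1$-$L^2$ reduction.

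\emph{Upper bound (mixing by $t^+_n$).} I would apply Part~1 of Theorem~\ref{thm-l1-l2} with $s=(10d/\sobinf)\log\log n$ and $t=t^+_n-s$. By the upper bound in \eqref{eq-L2-bounds}, $(n/\log^5 n)^d\ltwo_t^2\leq (n/\log^5 n)^d\mathrm{e}^{-2\lambda(r)t+2c_0\log\log n}$; plugging in $t=t^*+(\frac{25d}{\sobinf\gapinf}-\frac{10d}{\sobinf})\log\log n$ and using $\mathrm{e}^{-2\lambda(r)t^*}=n^{-d}$, this is at most $\log^{5d} n\cdot\mathrm{e}^{(2c_0-\frac{50d\lambda(r)}{\sobinf\gapinf}+\frac{20d\lambda(r)}{\sobinf})\log\log n}$. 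Since $\lambda(r)\geq\gapinf$ for large $n$ (or, more carefully, since $\lambda(r)$ is bounded below by a constant and one picks the numerical constant accordingly), the exponent is eventually a large negative multiple of $\log\log n$, overwhelming the $\log^{5d}n$ prefactor, so $(n/\log^5 n)^d\ltwo_t^2\to 0$ and Part~1 gives $\max_{x_0}\|\P_{x_0}(X_{t^+_n}\in\cdot)-\mu\|_\tv\to 0$. I would also need to check the hypotheses $t,s<\log^{4/3}n$, which hold since $t^+_n=O(\log n)$.

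\emph{Lower bound (not mixed at $t^-_n$).} Here I would apply Part~2 of Theorem~\ref{thm-l1-l2} at $t=t^-_n$, which requires $t^-_n\geq(20d/\sobinf)\log\log n$ (true, as $t^*\asymp\log n$) and $(n/\log^3 n)^d\ltwo_{t^-_n}^2\to\infty$. For the latter, the lower bound in \eqref{eq-L2-bounds} gives $\ltwo_{t^-_n}\geq\mathrm{e}^{-\lambda(r)t^-_n-c_0\log\log n}-n^{-9d}$; since $\mathrm{e}^{-\lambda(r)t^-_n}=n^{-d/2}\mathrm{e}^{\frac{15d\lambda(r)}{\sobinf\gapinf}\log\log n}$ dominates $n^{-9d}$ for large $n$, one gets $(n/\log^3 n)^d\ltwo_{t^-_n}^2\geq\log^{-3d}n\cdot\mathrm{e}^{(\frac{30d\lambda(r)}{\sobinf\gapinf}-2c_0)\log\log n}\to\infty$, again using $\lambda(r)\geq\gapinf$ and choosing the constant $15$ large enough. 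Then Part~2 yields $\max_{x_0}\|\P_{x_0}(X_{t^-_n}\in\cdot)-\mu\|_\tv\to 1$, completing the proof.

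The main obstacle is bookkeeping the competing logarithmic factors: the $\log^{5d}n$ (resp.\ $\log^{-3d}n$) prefactors coming from the ratio $n^d/\log^{5d}n$ (resp.\ $n^d/\log^{3d}n$) in Theorem~\ref{thm-l1-l2}, the $c_0\log\log n=\frac{12d}{\sobinf}\log\log n$ slack in Lemma~\ref{lem-L2-bounds}, and the $\mathrm{e}^{-\lambda(r)s}$ contribution of the buffer time $s$, must all be dominated by the $\pm\frac{15d}{\sobinf\gapinf}\log\log n$ and $\pm\frac{25d}{\sobinf\gapinf}\log\log n$ shifts. Since $\lambda(r)$ is only known to lie between $\gapinf$ and $1$, the argument should be phrased so that the window constants are chosen after fixing $\sobinf,\gapinf$, and one uses $\lambda(r)\ge\gapinf$ to guarantee the shift in $t$ translates into an $\mathrm{e}^{-c\log\log n}$ (resp.\ $\mathrm{e}^{+c\log\log n}$) gain with $c$ as large as needed. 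No genuinely new idea is required beyond Lemma~\ref{lem-L2-bounds} and Theorem~\ref{thm-l1-l2}; it is a matter of assembling the estimates with enough room to spare.
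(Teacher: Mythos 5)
Your proposal is correct and follows essentially the same route as the paper: feed the two-sided bound of Lemma~\ref{lem-L2-bounds} into Parts~1 and~2 of Theorem~\ref{thm-l1-l2}, with the buffer $s=(10d/\sobinf)\log\log n$ absorbed into $t_n^+$ and the constants $15,25$ chosen so that the $c_0\log\log n$ slack and the $\log^{O(d)}n$ prefactors are dominated (the paper, like you, implicitly uses $\lambda(r)\geq\gapinf$ together with $\lambda(r)\leq 1$ at this point). The only blemish is the harmless slip $(n/\log^5 n)^d\cdot n^{-d}=\log^{-5d}n$, not $\log^{5d}n$, which only makes your stated bound weaker and does not affect the conclusion.
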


\begin{proof}
We begin by applying Lemma~\ref{lem-L2-bounds} to $t_n^-$. The left-hand-side of \eqref{eq-L2-bounds} establishes that
\begin{align*}
(n/\log^3 n)^d   \ltwo_{t_n^-}^2 &\geq (n/\log^3 n)^d \left(n^{-d} (\log n)^{\frac{6d}{\sobinf}} - n^{-9d}\right) \geq (1+o(1))\log^{3d} n\,,
\end{align*}
and in particular $(n/\log^3 n)^d   \ltwo_{t_n^-}^2\to\infty$, hence by Part~2 of Theorem~\ref{thm-l1-l2}
\[
\max_{x_0} \left\| \P_{x_0} (X_{t_n^-}\in \cdot) - \mu\right\|_{\tv} \to 1\,.
\]
Using the right-hand-side of \eqref{eq-L2-bounds} for $t=t_n^+ - (10d/\sobinf)\log\log n$ gives
\begin{align*}
(n/\log^5 n)^d \ltwo_{t}^2 &\leq (n/\log^5 n)^d \cdot n^{-d} (\log n)^{-\frac{6d}{\sobinf}} \leq \log^{-11d} n = o(1)\,.
\end{align*}
Therefore, by Part~1 of Theorem~\ref{thm-l1-l2}, at time $t_n^+ = t+s$ we have
$$\max_{x_0} \left\| \P_{x_0} (X_{t_n^+}\in\cdot) - \mu\right\|_{\tv}  \to  0\,,$$
as required.
\end{proof}

%

\subsection{Limit of spectral gaps}
Theorem \ref{t:cutoffLocation} established the location of the mixing time in terms of $\lambda(r)$, the spectral gap of the
Glauber dynamics on the $d$-dimensional torus of side-length $r$.
As commented at the beginning of Section~\ref{sec:l1-l2},
we are allowed some latitude in our choice of $r$. This provides a way of relating the spectral gaps for different values of $r$,
ultimately proving that they converge to $\lambda_\infty$, the spectral gap on the infinite-volume lattice.

\begin{lemma}\label{l:spectralGapConvergenceRate}
Let $\beta \geq 0$, and let $\lambda(r)$ be the spectral gap of continuous-time Glauber dynamics for the Ising model on $\Z_r^d$ at inverse-temperature $\beta$.
If there is  strong spatial mixing at inverse temperature $\beta$ then there exists some $\hat{\lambda}>0$ such that
\begin{equation}\label{e:spectralGapConvergenceRate}
\big|\lambda(r) - \hat{\lambda} \big| \leq  r^{-1/2+o(1)} \,.
\end{equation}
\end{lemma}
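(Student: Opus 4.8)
The plan is to read off the spectral gap $\lambda(r)$ from the \emph{location} of the cutoff furnished by Theorem~\ref{t:cutoffLocation}, exploiting the flexibility — pointed out at the start of Section~\ref{sec:l1-l2} — to run the entire argument of Sections~\ref{sec:l1-l2}--\ref{sec:mainthms} with the auxiliary torus side-length $r=r(n)$ being \emph{any} poly-logarithmic function of $n$ with $r\ge\log^{2+\delta}n$ for some fixed $\delta>0$. Concretely, for each fixed $\delta\in(0,1]$ the arguments establishing Lemma~\ref{lem-L2-bounds} and Theorem~\ref{t:cutoffLocation}, carried out with $r=\log^{2+\delta}n$ in place of $r=3\log^3 n$, show that the continuous-time Glauber dynamics on $\Z_n^d$ exhibits cutoff with window $O_\delta(\log\log n)$ centered at $\tfrac{d}{2\lambda(r)}\log n$. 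Since for a continuous-time chain the total-variation distance from stationarity is non-increasing in $t$, any two such cutoff locations (obtained from two admissible choices of $r$ for the \emph{same} $n$) must coincide up to an additive $O(\log\log n)$.

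First I would turn this into a comparison of $\lambda$ at two nearby scales. Fix $\delta\in(0,1]$ and large integers $v\le v'\le 2v$. Pick $n=n(v)$ with $\log^{2+\delta}n\le v\le 2\log^{2+\delta}n$ (possible once $v$ is large); then both $v$ and $v'$ are admissible auxiliary scales for $\Z_n^d$, and $\log n\asymp v^{1/(2+\delta)}$. Applying the above form of Theorem~\ref{t:cutoffLocation} to $\Z_n^d$ once with auxiliary scale $v$ and once with $v'$ produces cutoff locations $\tfrac{d\log n}{2\lambda(v)}$ and $\tfrac{d\log n}{2\lambda(v')}$, so $\tfrac{d\log n}{2}\bigl|\lambda(v)^{-1}-\lambda(v')^{-1}\bigr|=O_\delta(\log\log n)$. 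As $\lambda(v),\lambda(v')\in[2\sobinf,1]$ are bounded away from $0$ and $\infty$ (recall $\lambda>2\sob\ge 2\sobinf$ and $\lambda\le1$), this gives
\[
 \bigl|\lambda(v)-\lambda(v')\bigr|
 \;=\;O_\delta\!\left(\frac{\log\log n}{\log n}\right)
 \;=\;O_\delta\!\left(v^{-1/(2+\delta)}\log v\right),\qquad v\le v'\le 2v .
\]

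Next I would assemble this into the stated convergence. Along $v_k=2^k$ the telescoping bound $|\lambda(v_{k+1})-\lambda(v_k)|=O_\delta(v_k^{-1/(2+\delta)}\log v_k)$ is summable, so $(\lambda(v_k))_k$ is Cauchy; write $\hat\lambda$ for its limit, which satisfies $\hat\lambda\ge 2\sobinf>0$ and does not depend on $\delta$ since it is simply $\lim_{r\to\infty}\lambda(r)$. Summing the geometric tail yields $|\lambda(v_k)-\hat\lambda|=O_\delta(v_k^{-1/(2+\delta)}\log v_k)$, and for general large $r$, choosing $k$ with $v_k\le r<v_{k+1}=2v_k$ and comparing $\lambda(r)$ to $\lambda(v_k)$ via the display above gives $|\lambda(r)-\hat\lambda|=O_\delta(r^{-1/(2+\delta)}\log r)$. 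Finally, given $\epsilon>0$ choose $\delta=\delta(\epsilon)$ with $\tfrac1{2+\delta}>\tfrac12-\epsilon$; then $|\lambda(r)-\hat\lambda|\le r^{-1/2+\epsilon}$ for all large $r$, and letting $\epsilon\downarrow0$ establishes \eqref{e:spectralGapConvergenceRate}.

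The main obstacle here is purely one of bookkeeping: confirming that Theorem~\ref{t:cutoffLocation}, together with Lemma~\ref{lem-L2-bounds} and the $L^1$--$L^2$ reduction of Theorem~\ref{thm-l1-l2}, goes through \emph{verbatim} with $r=\log^{2+\delta}n$, and in particular that the cutoff window remains $O(\log\log n)$ with an implied constant depending only on $\delta$ (and $d,\sobinf,\gapinf$) and not on $n$. This is exactly where the foreshadowed latitude in the choice of $r$ in Section~\ref{sec:l1-l2} is used: every estimate there only needed $r$ to be poly-logarithmic and at least $\log^{2+\delta}n$, so the sole modification is that the coefficient of $\log\log n$ in the bound $\log\log\bigl(1/\mu_r^*(\sigma)\bigr)\ge(3d+o(1))\log\log n$ used in Lemma~\ref{lem-L2-bounds} becomes $(2+\delta)d$, which only improves the constants.
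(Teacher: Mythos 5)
Your proposal is correct and follows essentially the same route as the paper: exploit the latitude in the auxiliary scale $r$ to apply Theorem~\ref{t:cutoffLocation} twice for the same $n$, use monotonicity of the total-variation distance to force the two cutoff locations to agree up to the $O_\delta(\log\log n)$ window, rearrange into $|\lambda(r_1)-\lambda(r_2)|=O_\delta(\log\log n/\log n)$, telescope to obtain the limit $\hat\lambda\ge 2\sobinf>0$, and let $\delta\to 0$. The only (immaterial) difference is that you telescope along doubling scales $v_k=2^k$ with $n=n(v)$ chosen per scale, while the paper compares $r_1\le r_2\le r_1^2$ and telescopes along iterated squares $r^{2^i}$; both tails are dominated by the first term and yield the same $r^{-1/2+o(1)}$ rate.
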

\begin{proof}
As noted in the beginning of Section~\ref{sec:l1-l2}, there is a lot of freedom in the choice of $r=3\log^3 n$ for the definition of $\ltwo_t$ in \eqref{eq-mt-def},
and the proofs hold as is (while resulting in slightly different absolute constants, e.g., $c_0$ in Theorem~\ref{t:cutoffLocation} etc.) for $r = \log^{2+\delta}n$ with an arbitrary fixed $\delta > 0$.
With this in mind, fix some small $\delta > 0$ and take $r_1 = \log^{2+\delta}n$ and $r_1 \leq r_2 \leq r_1^2$.

In Theorem~\ref{t:cutoffLocation} we provided upper and lower bounds on the quantity $\max_{x_0} \left\| \P_{x_0} (X_{t}\in \cdot) - \mu\right\|_{\tv}$
in terms of $\lambda(r),\sobinf,\gapinf$ and some absolute constants. As argued above, there exists some $C=C(\delta)>0$ such that for every $r_1 \leq r \leq r_1^2$
the statement of the theorem holds with parameters
\begin{align*}
t^* = t^*(n) &= \frac{d}{2\lambda(r)}\log n\,,\\
t^-_n = t^* - \frac{C d}{\sobinf\gapinf}\log\log n&\,,\qquad  t^+_n = t^* + \frac{C d}{\sobinf\gapinf}\log\log n\,.
\end{align*}
Applying this theorem both on $r_1$ and $r_2$ we must have $t_{n}^+(r_1) \geq t_{n}^-(r_2)$ for sufficiently large $n$, and so
\[
 \frac{d}{2\lambda(r_1)}\log n  + \frac{C d}{\sobinf\gapinf} \log\log n \geq  \frac{d}{2\lambda(r_2)}\log n -  \frac{C d}{\sobinf\gapinf}\log\log n\,.
\]
Rearranging we obtain that
\[
\lambda(r_1) - \lambda(r_2) \leq  4 C\frac{\lambda(r_1)\lambda(r_2) }{\sobinf\gapinf} \frac{ \log\log n}{\log n} \leq r_1^{-1/2+\delta} \,,
\]
where the last inequality holds for any sufficiently large $n$.
As we can clearly reverse the role of $r_1$ and $r_2$ it follows that for any large $n$,
\begin{equation}\label{e:spectralGapDifferences}
\max_{r_1< r_2\leq r_1^2} |\lambda(r_1) - \lambda(r_2) | \leq  r_1^{-1/2 + \delta}\,.
\end{equation}
By the above inequality, if $r$ is a sufficiently large integer then
\begin{equation}\label{e:spectralLimit}
\sum_{i=0}^\infty \left|\lambda\big(r^{2^i}\big) - \lambda\big(r^{2^{i+1}}\big)\right| \leq \sum_{i=0}^\infty r^{-2^{i-1}+2^i\delta} \leq 2 r^{-1/2+\delta} < \infty \,.
\end{equation}
Combining equations \eqref{e:spectralGapDifferences} and \eqref{e:spectralLimit} establishes that $\{\lambda(r)\}_{r=0}^\infty$ converges to some limit $\hat{\lambda}$ and that for large $r$,
\[
\left|\lambda(r) - \hat{\lambda} \right| \leq  r^{-1/2+\delta}\,.
\]
Letting $\delta\to 0$ completes the proof.
\end{proof}

It remains to show that the above $\hat{\lambda}$ is equal to $\lambda_\infty$, the spectral gap of the dynamics on the infinite-volume lattice.
Let $(\sigma_t)$ denote the Glauber dynamics on the infinite volume lattice, let $(\sigma^+_t)$ be the dynamics starting from the all-plus configuration and define
\[ \xi_t = \P\left(\sigma_t^+(o)=1\right) - \P\left(\sigma_t^-(o)=1\right)\,,\]
which in the special case of no external field is simply $2\left(\P\left(\sigma_t^+(o)=1\right)-\frac12\right)$.
\begin{claim}\label{cl:magnetizationDecay}
The above defined $\xi_t$ and $\hat{\lambda}$ satisfy
$\big| \xi_t^{1/t} - \exp(-\hat{\lambda})\big| = O\big(\frac{\log t}{t} \big)$,
and in particular $\lim_{t\to\infty} \xi_t^{1/t} = \exp(- \hat{\lambda})
$.
\end{claim}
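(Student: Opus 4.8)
The plan is to compare, for each $t$, the infinite–volume quantity $\xi_t$ with its torus analogue
\[
\xi_t^{(r)} \;:=\; \big(P_t^{(r)}g\big)(\underline 1)-\big(P_t^{(r)}g\big)(\underline{-1}),\qquad g(\sigma)=\sigma(o),
\]
on $\Z_r^d$ of side–length $r=r(t):=\lceil t^3\rceil$, where $P_t^{(r)},\mu^*_r,\lambda(r),\sob(r)$ denote the semigroup, Gibbs measure, spectral gap and log–Sobolev constant of Glauber dynamics on $\Z_r^d$. Since $r\gg t$, finite speed of propagation — the update–path estimate of Lemma~\ref{lem-barrier-couple} — couples the two pairs of dynamics so that the spin at $o$ agrees up to time $t$ unless an influence path of length $\Omega(r)$ fires before time $t$, an event of probability $\exp(-\Omega(t^3\log t))$; hence $|\xi_t-\xi_t^{(r)}|\le e^{-t^2}$ for all large $t$. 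By Lemma~\ref{l:spectralGapConvergenceRate}, $\lambda(r)=\hat{\lambda}+O(r^{-1/2+o(1)})=\hat{\lambda}+o(1/t)$, so it remains to pin $\xi_t^{(r)}$ between $r^{O(1)}e^{-\lambda(r)t}$ and $r^{-O(1)}e^{-\lambda(r)t}$. The key observation for both directions is that, by the monotone coupling, $h_s:=P_s^{(r)}g-\mu^*_r(g)$ is an increasing function with $h_s(\underline{-1})\le 0\le h_s(\underline 1)$ (the extremal expectations straddle the equilibrium magnetization), whence $\|h_s\|_\infty\le \xi_s^{(r)}=h_s(\underline 1)-h_s(\underline{-1})\le 2\|h_s\|_\infty$.

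For the upper bound, $h_t$ has $\mu^*_r$–mean zero, so the $L^2(\mu^*_r)$–contraction of $P_t^{(r)}$ at rate $\lambda(r)$ gives $\|h_t\|_{L^2(\mu^*_r)}\le e^{-\lambda(r)t}\|g\|_{L^2(\mu^*_r)}\le e^{-\lambda(r)t}$. Feeding this through the $L^2\!\to\!L^\infty$ smoothing furnished by Theorems~\ref{thm-l2-sobolev}--\ref{thm-log-sobolev-torus} over a window $s_0=O(\sobinf^{-1}\log r)$ (using $\mu^*_r(\sigma)\ge 2^{-r^d}$, $\sob(r)\ge\sobinf$ and $\lambda(r)\ge 2\sobinf$, so that $\|P^{(r)}_{s_0}\|_{L^2\to L^\infty}\le 1$ on mean–zero functions) yields $\|h_t\|_\infty=\|P^{(r)}_{s_0}h_{t-s_0}\|_\infty\le e^{\lambda(r)s_0}e^{-\lambda(r)t}\le r^{O(1)}e^{-\lambda(r)t}$. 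Therefore $\xi_t\le 2\|h_t\|_\infty+e^{-t^2}\le t^{O(1)}e^{-\hat{\lambda}t}$, i.e.\ $\xi_t^{1/t}\le e^{-\hat{\lambda}}\big(1+O(\tfrac{\log t}{t})\big)$.

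For the matching lower bound, $\xi_t^{(r)}\ge\|h_t\|_\infty\ge\|h_t\|_{L^2(\mu^*_r)}$, and expanding $g-\mu^*_r(g)$ in the spectral basis of the (reversible) torus generator gives $\|h_t\|_{L^2(\mu^*_r)}\ge \|\Pi_{V(r)}(g-\mu^*_r(g))\|_{L^2(\mu^*_r)}\,e^{-\lambda(r)t}$, where $V(r)$ is the eigenspace of $\lambda(r)$ and $\Pi_{V(r)}$ the orthogonal projection. Thus it suffices to prove
\begin{equation}\label{eq-key-overlap}
\big\|\Pi_{V(r)}(g-\mu^*_r(g))\big\|_{L^2(\mu^*_r)}\ \ge\ r^{-C}\qquad(\text{all large }r)
\end{equation}
for a constant $C=C(d,\beta)$: then $\xi_t\ge r^{-C}e^{-\lambda(r)t}-e^{-t^2}\ge t^{-3C}e^{-\hat{\lambda}t}(1-o(1))$ and $\xi_t^{1/t}\ge e^{-\hat{\lambda}}\big(1-O(\tfrac{\log t}{t})\big)$, which with the previous paragraph proves the claim. (Note the polynomial torus $r=t^3$, rather than a slowly growing one, is what is needed to keep both $\lambda(r)t=\hat{\lambda}t+o(1)$ and the $r^{\pm O(1)}$ errors within the stated $O(\log t/t)$ window.)

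Establishing \eqref{eq-key-overlap} is the main obstacle, and is precisely where attractiveness is essential. Because $P_s^{(r)}$ maps increasing functions to increasing functions, $\Pi_{V(r)}(g-\mu^*_r(g))=\lim_{s\to\infty}e^{\lambda(r)s}h_s$ is a pointwise limit of increasing functions, hence itself increasing; and $V(r)$ contains a non-constant increasing eigenfunction $\Phi_r$ (which one may take odd under the global spin flip when $h=0$), either by this observation or by a Krein--Rutman argument on the cone of increasing functions. Were the left side of \eqref{eq-key-overlap} to vanish, then since $V(r)$ is translation invariant every linear function would be orthogonal to $V(r)$; in particular the non-constant increasing $\Phi_r$ would satisfy $\Cov_{\mu^*_r}(\sigma(y),\Phi_r)=0$ for every site $y$, contradicting the FKG inequality — conditioning on a single spin and using that $\mu^*_r$ has full support yields $\Cov_{\mu^*_r}(\sigma(y),\Phi_r)>0$ at any $y$ where $\Phi_r$ genuinely increases. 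So the overlap is positive for each $r$; the delicate point — which I expect to be the hardest step — is the \emph{uniform polynomial} lower bound, equivalently that the single-site discrepancy $\xi_s^{(r)}$ from the extremal configuration decays at exactly the spectral-gap rate $\lambda(r)$ with a prefactor of order $\Phi_r(\underline 1)=\Theta\!\big(\lim_{s\to\infty}e^{\lambda(r)s}\xi_s^{(r)}\big)$ bounded below by $r^{-C}$, rather than at a strictly faster rate. This should be deduced from the monotone-eigenfunction structure above together with a quantitative FKG / strong-spatial-mixing estimate for $\Phi_r(\underline 1)$.
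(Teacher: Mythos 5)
Your reduction to the torus of side $r=t^3$ via finite speed of propagation, your use of Lemma~\ref{l:spectralGapConvergenceRate}, and your upper bound $\xi_t\leq t^{O(1)}\mathrm{e}^{-\hat{\lambda}t}$ via $L^2$-contraction plus log-Sobolev smoothing (Theorems~\ref{thm-l2-sobolev} and \ref{thm-log-sobolev-torus}) are all sound and parallel to what the paper does. The genuine gap is the lower bound: your entire argument funnels through the uniform overlap estimate $\|\Pi_{V(r)}(\sigma(o)-\mu^*_r(\sigma(o)))\|_{L^2(\mu^*_r)}\geq r^{-C}$, and this is exactly the step you do not prove. What you do establish (existence of an increasing element of the gap eigenspace and strict positivity of the overlap for each fixed $r$, via FKG) gives no quantitative control as $r\to\infty$; a priori the overlap could decay like $\mathrm{e}^{-r}$, in which case your lower bound on $\xi_t^{(r)}$ would be useless at the scale $r=t^3$ and the claim $\xi_t^{1/t}\geq \mathrm{e}^{-\hat{\lambda}}(1-O(\frac{\log t}{t}))$ would not follow. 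Since you yourself flag this as ``the hardest step'' and defer it to an unspecified ``quantitative FKG / strong-spatial-mixing estimate,'' the proof is incomplete at its crux.

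The paper avoids the overlap question entirely. Instead of decomposing the single-spin observable spectrally, it sandwiches the single-site discrepancy by the worst-case total-variation distance on the torus: by monotonicity any starting state is trapped between the plus and minus chains, so $\max_{x_0}\|\P_{x_0}(X_t\in\cdot)-\mu\|_\tv$ is at most the expected number of disagreements in the monotone coupling, which by translation invariance of the torus equals $r^d\big(\P(X_t^+(o)=1)-\P(X_t^-(o)=1)\big)$; conversely the discrepancy is at most twice that maximum (this is \eqref{e:disagreementTVBound}). The lower bound on $\xi_t^{(r)}$ then comes for free from the universal spectral bound $\mathrm{e}^{-\lambda(r)t}\leq 2\max_{x_0}\|\P_{x_0}(X_t\in\cdot)-\mu\|_\tv$ (the witness there is whatever function attains the gap — its overlap with $\sigma(o)$ never enters), while the upper bound comes from Lemma~\ref{lem-L2-bounds} and the $L^1$--$L^2$ machinery of Theorem~\ref{thm-l1-l2}. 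If you want to salvage your route, the fix is to replace your spectral-projection lower bound for $\|h_t\|_\infty$ by this monotonicity-plus-symmetry comparison with the total-variation distance; otherwise you must actually prove the polynomial overlap bound, which is a nontrivial statement not supplied by FKG alone.
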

\begin{proof}
Put $n=\exp(t)$ and let $(X_t^+)$ and $(X_t^-)$ denote the Glauber dynamics starting from the all-plus and all-minus configurations respectively on the torus with side-length $r=3\log^3 n$.
Consider the monotone coupling of $(X_t^+)$ and $(X_t^-)$.
By the symmetry of the torus, the expected number of disagreements at time $t$ is given by $r^d\left(\P(X^+_t(o)=1) -
\P(X^-_t(o)=1)\right) $ and hence
\begin{align}
\P(X^+_t(o)=1) -
\P(X^-_t(o)=1) &\leq  2\max_{x_0} \| \P_{x_0}(X_t\in\cdot)-\mu \|_{\tv}  \nonumber\\
&\leq 2r^d\left (\P(X^+_t(o)=1) -
\P(X^-_t(o)=1)\right) \,,\label{e:disagreementTVBound}
\end{align}
where the first inequality is by definition of the total variation distance.

Next, identify the vertices of $\Z_r^d$ with those in $\{x\in \mathbbm{Z}^d :\|x \|_\infty \leq r/2\}$ and couple $(X_t^+)$ and $(\sigma_t)$
via identical updates to identified vertices. By another simple application of the disagreement percolation argument (as in the proof of Lemma~\ref{lem-barrier-couple}),
\[ \P \left(\sigma_t^+(o) \neq X_t^+(o) \right) \leq n^{-10d} \,. \]
Combining this with \eqref{e:disagreementTVBound},
\[ r^{-d}  \max_{x_0} \| \P_{x_0}(X_t\in\cdot)-\mu \|_{\tv}  - n^{-10d}  \leq \xi_{t} \leq 2\max_{x_0} \| \P_{x_0}(X_t\in\cdot)-\mu \|_{\tv} + n^{-10d}.\]
Applying \eqref{eq-L2-bounds} and \eqref{e:l2UpperBound} for a choice of $s= (10d/\sobinf) \log \log n$ now gives
\begin{align*}
\xi_{t} &\leq O(n^{-9d})+ 2r^{d/2}\mathrm{e}^{-\lambda(r) (t-s)+c_0\log\log n}  \leq  \mathrm{e}^{-\lambda(r) t + c_1 \log\log n}
\end{align*}
and
\begin{align*}
\xi_{t} &\geq \frac1{2r^{d}} \mathrm{e}^{-\lambda(r) t} - n^{-10d} \geq  \mathrm{e}^{-\lambda(r) t - c_1 \log\log n}
\end{align*}
for some constant $c_1 > 0$.  Altogether, as $r=3\log^3 n$ and $t=\log n$,
\[
\bigg|\log\frac{\xi_t^{1/t}}{\exp(-\lambda(3t^3))}\bigg| \leq c_1\frac{\log\log n}t =  c_1\frac{\log t}t\,,
\]
which combined with Lemma \ref{l:spectralGapConvergenceRate} completes the proof.
\end{proof}

A result of Holley~\cite{Holley} shows that $\lim_{t\to\infty} \xi_t^{1/t} =\exp(-\lambda_\infty)$.
We will show how this result is quickly recovered from our proof.
Plugging the test-function $f(\sigma) = \one_{\{\sigma(o)=1\}}-\E \one_{\{\sigma(o)=1\}}$
into the characterization of the spectral gap as the slowest rate of exponential decay of the semigroup gives
\[
\exp(-\lambda_\infty) \geq \lim_{t\to\infty} \xi_t^{1/t}=\exp(-\hat{\lambda})\,.
\]
Now fix $\epsilon>0$ and recall the Dirichlet form \eqref{eq-dirichlet-form}, according to which
\[
\lambda_\infty = \inf_{f\in L^2(\{\pm1\}^{\mathbbm{Z}^d},\mu_\infty)}  \frac{\mathcal{E}_{\mu_\infty}(f,f)}{\var_{\mu_\infty}(f)}\,,
\]
where $\mu_\infty$ is the stationary measure of the infinite-volume Ising model.
For any $f\in L^2(\{\pm1\}^{\mathbbm{Z}^d},\mu_\infty)$
with $\mathcal{E}_{\mu_\infty}(f,f)$ we can find a sequence of functions $f_n\in L^2(\{\pm1\}^{\mathbbm{Z}^d},\mu_\infty)$
each of which depends only on a finite number of spins such that $f_n\to f$ in $L^2(\{\pm1\}^{\mathbbm{Z}^d},\mu_\infty)$ and
$\mathcal{E}_{\mu_\infty}(f_n,f_n)\to\mathcal{E}_{\mu_\infty}(f,f)$ (see e.g. the proof of \cite{Liggett}*{Lemma 4.3}).
So take $g\in L^2(\{\pm1\}^{\mathbbm{Z}^d},\mu_\infty)$  depending
only on a finite number of spins such that
\[
\frac{\mathcal{E}_{\mu_\infty}(g,g)}{\var_{\mu_\infty}(g)} \leq \frac{\mathcal{E}_{\mu_\infty}(f,f)}{\var_{\mu_\infty}(f)} +\epsilon.
\]
For some large enough $M$ we have that $g$ is a function of the spins in the box $B_M=\{x\in \mathbbm{Z}^d: \|x\|_\infty\leq M\}$.
We compare the Glauber dynamics on $\Z^d$ to that on $\Z_r^d$ for some large $r$, identifying the vertices of the latter with those in $\{x\in \mathbbm{Z}^d: \|x\|_\infty\leq r/2\}$
and denoting its stationary distribution by $\mu_r$.
By the strong spatial mixing property, the projection of $\mu_{r}$ on $B_M$ converges to the projection of $\mu_\infty$ on $B_M$ as $r\to\infty$
(in fact, an assumption weaker than strong spatial mixing would already infer this, e.g.\ uniqueness),
hence
\[
\frac{\mathcal{E}_{\mu_\infty}(g,g)}{\var_{\mu_\infty}(g)} = \lim_{r\to\infty} \frac{\mathcal{E}_{\mu_r}(g,g)}{\var_{\mu_r}(g)} \geq \lim_{r\to\infty} \lambda(r) =  \hat{\lambda} \,,
\]
where the inequality follows from the characterization of the spectral gap by the Dirichlet form.  This implies that
\[
\lambda_\infty = \inf_f \frac{\mathcal{E}_{\mu_\infty}(f,f)}{\var_{\mu_\infty}(f)} \geq \hat{\lambda} -\epsilon\,,
\]
and letting $\epsilon\to0$ gives that $\lambda_\infty=\hat{\lambda}$, as required.
This completes the proof of Theorems~\ref{mainthm-Z2} and \ref{mainthm-Zd}.
\qed

\section{Cutoff for other spin-systems}\label{sec:othermodels}

While the proof above was given for the ferromagnetic Ising model,
its arguments naturally extend to other well-studied spin-system models.
In this section we outline the minor modifications one needs to make in order to obtain cutoff
for general monotone and anti-monotone systems, thereby proving Theorems~\ref{mainthm-antiferro} and~\ref{mainthm-hardcore-Zd}.

A key prerequisite for our proofs is a uniformly bounded log-Sobolev constant for the dynamics.
This was proved in great generality in~\cite{Martinelli97} for Glauber dynamics on spin-system models on the lattice
where strong spatial mixing holds, and that result carries to periodic boundary with minor adjustments.

Otherwise, the only specific property of the Ising model used in our arguments is the monotonicity of the model.
The Glauber dynamics \eqref{eq-Glauber-gen} for a spin-system is \emph{monotone} (or \emph{attractive}) if
there is a partial ordering of the state space $\preceq$ according to which the transition rates satisfy
\begin{equation}\label{eq-attractive}
  \sigma(x) c(x,\sigma) \leq \eta(x) c(x,\eta) \mbox{ for all $\eta \preceq \sigma$ with $\eta(x)=\sigma(x)$}\,.
\end{equation}
Under this condition, the system allows a monotone coupling of the Glauber dynamics.
We use the monotonicity of the model in precisely two locations.
\begin{enumerate}[(1)]
  \item In Lemma~\ref{lem-sparse-prob} we prove that the update support is sparse w.h.p.\ by showing that in most of the tori
  that make up the barrier dynamics the all-plus and all-minus configurations couple (hence the final configuration is
  independent of the projection of the starting configuration onto those tori). This is a crucial application of the system's monotonicity.
  \item In equation~\eqref{e:disagreementTVBound} of Claim~\ref{cl:magnetizationDecay} we relate the expected number of disagreements
  in the monotone coupling of the dynamics to the total-variation distance of the dynamics from stationarity.
\end{enumerate}
The arguments in both of these steps hold essentially unchanged for any monotone dynamics.

An anti-monotone system is one where the reverse inequality in \eqref{eq-attractive} always holds.
When the underlying geometry is a bipartite graph, there is a standard transformation of an anti-monotone system into a monotone one.
Let $V = V_e\cup V_o$ be a partition of the sites such that there are no edges within $V_e$ or $V_o$.
We define a new partial ordering $\preceq^*$ on $\{\pm1\}^V$ as follows: For two configurations $\sigma,\eta$
we have $\sigma \preceq^* \eta$ if $\sigma(v)\leq\eta(v)$ for all $v\in V_e$ and $\sigma(v)\geq\eta(v)$ for all $v\in V_o$.
It is easy to verify that an anti-monotone system under the standard partial ordering $\preceq$ is a monotone system under $\preceq^*$.

To derive Theorems~\ref{mainthm-antiferro} and \ref{mainthm-hardcore-Zd}, note that the $d$-dimensional lattice with periodic boundary conditions $(\Z/r\Z)^d$ is bipartite
if and only if $r$ is even. Crucially, our proof for cutoff on $(\Z/n\Z)^d$ only required the dynamics to be monotone on the smaller tori $(\Z/r\Z)^d$
for $r=r(n)$ as defined above \eqref{eq-mt-def}. As we noted there, we have freedom for our choice of $r$ (basically any choice between $\log^{2+\epsilon} n$
and $\log^{O(1)}n$ would do) and in particular we can let $r$ be even. Applying the above transformation therefore establishes cutoff for the anti-ferromagnetic
Ising model and the gas hard-core model.

As a side note, in our companion paper~\cite{LS1} we establish that any model with soft interactions (e.g.\ the Potts model) has cutoff at high enough temperatures.
One of the key challenges there is to show that the update support is typically sparse.
We further address non-periodic boundary conditions, which break down the symmetric structure of the torus, thereby letting the smaller
tori (which we analyze in our $L^1$-$L^2$ reduction) play different roles in the dynamics according to their vicinity to the boundary.

\section*{Acknowledgments}
We are grateful to Yuval Peres for inspiring us to pursue this research.
We thank him and Fabio Martinelli for useful discussions.

This work was initiated while the second author was an intern at the Theory Group of Microsoft Research as a doctoral student at UC Berkeley.

\begin{bibdiv}
\begin{biblist}

\bib{AH}{article}{
   author={Aizenman, M.},
   author={Holley, R.},
   title={Rapid convergence to equilibrium of stochastic Ising models in the Dobrushin Shlosman regime},
   conference={
      title={},
      address={Minneapolis, Minn.},
      date={1984--1985},
   },
   book={
      series={IMA Vol. Math. Appl.},
      volume={8},
      publisher={Springer},
      place={New York},
   },
   date={1987},
   pages={1--11},
}

\bib{Aldous}{article}{
  author = {Aldous, David},
  title = {Random walks on finite groups and rapidly mixing {M}arkov chains},
  booktitle = {Seminar on probability, XVII},
  series = {Lecture Notes in Math.},
  volume = {986},
  pages = {243--297},
  publisher = {Springer},
  address = {Berlin},
  year = {1983},
}

\bib{AF}{book}{
    AUTHOR = {Aldous, David},
    AUTHOR = {Fill, James Allen},
    TITLE =  {Reversible {M}arkov Chains and Random Walks on Graphs},
    note = {In preparation, \texttt{http://www.stat.berkeley.edu/\~{}aldous/RWG/book.html}},
}

\bib{AD}{article}{
  author = {Aldous, David},
  author = {Diaconis, Persi},
  title = {Shuffling cards and stopping times},
  journal = {Amer. Math. Monthly},
  volume = {93},
  pages = {333--348},
  year = {1986},
}

\bib{Cesi}{article}{
   author={Cesi, Filippo},
   title={Quasi-factorization of the entropy and logarithmic Sobolev
   inequalities for Gibbs random fields},
   journal={Probab. Theory Related Fields},
   volume={120},
   date={2001},
   number={4},
   pages={569--584},
}

\bib{CS}{article}{
   author = {Chen, Guan-Yu},
   author = {Saloff-Coste, Laurent},
   title = {The cutoff phenomenon for ergodic Markov processes},
   journal = {Electronic Journal of Probability},
   volume = {13},
   year = {2008},
   pages = {26--78},
}

\bib{Diaconis}{article}{
  author = {Diaconis, Persi},
  title = {The cutoff phenomenon in finite {M}arkov chains},
  journal = {Proc. Nat. Acad. Sci. U.S.A.},
  volume = {93},
  year = {1996},
  number = {4},
  pages = {1659--1664},
}

\bib{DS1}{article}{
   author={Diaconis, Persi},
   author={Saloff-Coste, Laurent},
   title={Comparison techniques for random walk on finite groups},
   journal={Ann. Probab.},
   volume={21},
   date={1993},
   number={4},
   pages={2131--2156},
}

\bib{DS2}{article}{
   author={Diaconis, Persi},
   author={Saloff-Coste, Laurent},
   title={Comparison theorems for reversible Markov chains},
   journal={Ann. Appl. Probab.},
   volume={3},
   date={1993},
   number={3},
   pages={696--730},
}

\bib{DS}{article}{
   author={Diaconis, P.},
   author={Saloff-Coste, L.},
   title={Logarithmic Sobolev inequalities for finite Markov chains},
   journal={Ann. Appl. Probab.},
   volume={6},
   date={1996},
   number={3},
   pages={695--750},
}

\bib{DS3}{article}{
   author={Diaconis, P.},
   author={Saloff-Coste, L.},
   title={Nash inequalities for finite Markov chains},
   journal={J. Theoret. Probab.},
   volume={9},
   date={1996},
   number={2},
   pages={459--510},
}

\bib{DiSh}{article}{
  author = {Diaconis, Persi},
  author = {Shahshahani, Mehrdad},
  title = {Generating a random permutation with random transpositions},
  journal = {Z. Wahrsch. Verw. Gebiete},
  volume = {57},
  year = {1981},
  number = {2},
  pages = {159--179},
}

\bib{DLP}{article}{
   author={Ding, Jian},
   author={Lubetzky, Eyal},
   author={Peres, Yuval},
   title={The mixing time evolution of Glauber dynamics for the mean-field Ising model},
   journal={Comm. Math. Phys.},
   volume={289},
   date={2009},
   number={2},
   pages={725--764},
}

\bib{DLP2}{article}{
  author = {Ding, Jian},
  author = {Lubetzky, Eyal},
  author = {Peres, Yuval},
  title = {Total-variation cutoff in birth-and-death chains},
  journal = {Probab. Theory Related Fields},
  status = {to appear},
}

\bib{DobShl}{article}{
   author={Dobrushin, R. L.},
   author={Shlosman, S. B.},
   title={Completely analytical interactions: constructive description},
   journal={J. Statist. Phys.},
   volume={46},
   date={1987},
   number={5-6},
   pages={983--1014},
}
	
\bib{Glauber}{article}{
   author={Glauber, Roy J.},
   title={Time-dependent statistics of the Ising model},
   journal={J. Mathematical Phys.},
   volume={4},
   date={1963},
   pages={294--307},
}

\bib{GZ}{article}{
   author={Guionnet, Alice},
   author={Zegarlinski, Boguslaw},
   title={Decay to equilibrium in random spin systems on a lattice},
   journal={Comm. Math. Phys.},
   volume={181},
   date={1996},
   number={3},
   pages={703--732},
}

\bib{HaSi}{article}{
  title={A general lower bound for mixing of single-site dynamics on graphs},
  author={Hayes, T. P.},
  author={Sinclair, A.},
  booktitle={Foundations of Computer Science, 2005. FOCS 2005. 46th Annual IEEE Symposium on},
  pages={511--520},
  year={2005},
}

\bib{Holley}{article}{
   author={Holley, Richard A.},
   title={On the asymptotics of the spin-spin autocorrelation function in stochastic Ising models near the critical temperature},
   conference={
      title={Spatial stochastic processes},
   },
   book={
      series={Progr. Probab.},
      volume={19},
      publisher={Birkh\"auser Boston},
      place={Boston, MA},
   },
   date={1991},
   pages={89--104},
}


\bib{HoSt1}{article}{
   author={Holley, Richard A.},
   author={Stroock, Daniel W.},
   title={Logarithmic Sobolev inequalities and stochastic Ising models},
   journal={J. Statist. Phys.},
   volume={46},
   date={1987},
   number={5-6},
   pages={1159--1194},
}

\bib{HoSt2}{article}{
   author={Holley, Richard A.},
   author={Stroock, Daniel W.},
   title={Uniform and $L\sp 2$ convergence in one-dimensional stochastic
   Ising models},
   journal={Comm. Math. Phys.},
   volume={123},
   date={1989},
   number={1},
   pages={85--93},
}

\bib{LLP}{article}{
  journal = {Probability Theory and Related Fields},
  volume  = {},
  pages   = {},
  year    = {},
  title   = {Glauber dynamics for the Mean-field Ising Model: cut-off, critical power law, and metastability},
  author  = {Levin, David A.},
  author = {Luczak, Malwina},
  author = {Peres, Yuval},
  status = {to appear},
}

\bib{LPW}{book}{
  title={{Markov chains and mixing times}},
  author={Levin, D.A.},
  author={Peres, Y.},
  author={Wilmer, E.L.},
  journal={American Mathematical Society},
  year={2008},
}

\bib{Liggett}{book}{
   author={Liggett, Thomas M.},
   title={Interacting particle systems},
   series={Classics in Mathematics},
   note={Reprint of the 1985 original},
   publisher={Springer-Verlag},
   place={Berlin},
   date={2005},
   pages={xvi+496},
}

\bib{LY}{article}{
   author={Lu, Sheng Lin},
   author={Yau, Horng-Tzer},
   title={Spectral gap and logarithmic Sobolev inequality for Kawasaki and
   Glauber dynamics},
   journal={Comm. Math. Phys.},
   volume={156},
   date={1993},
   number={2},
   pages={399--433},
}

\bib{LS1}{article}{
    author = {Lubetzky, Eyal},
    author = {Sly, Allan},
    title = {Cutoff for general spin systems with arbitrary boundary conditions},
    note = {preprint},
}

\bib{LS2}{article}{
    author = {Lubetzky, Eyal},
    author = {Sly, Allan},
    title = {Cutoff phenomena for random walks on random regular graphs},
    note = {preprint},
}

\bib{Martinelli97}{article}{
   author={Martinelli, Fabio},
   title={Lectures on Glauber dynamics for discrete spin models},
   conference={
      title={Lectures on probability theory and statistics},
      address={Saint-Flour},
      date={1997},
   },
   book={
      series={Lecture Notes in Math.},
      volume={1717},
      publisher={Springer},
      place={Berlin},
   },
   date={1999},
   pages={93--191},
}

\bib{Martinelli04}{article}{
   author={Martinelli, Fabio},
   title={Relaxation times of Markov chains in statistical mechanics and
   combinatorial structures},
   conference={
      title={Probability on discrete structures},
   },
   book={
      series={Encyclopaedia Math. Sci.},
      volume={110},
      publisher={Springer},
      place={Berlin},
   },
   date={2004},
   pages={175--262},
}

\bib{MO}{article}{
   author={Martinelli, F.},
   author={Olivieri, E.},
   title={Approach to equilibrium of Glauber dynamics in the one phase
   region. I. The attractive case},
   journal={Comm. Math. Phys.},
   volume={161},
   date={1994},
   number={3},
   pages={447--486},
}

\bib{MO2}{article}{
   author={Martinelli, F.},
   author={Olivieri, E.},
   title={Approach to equilibrium of Glauber dynamics in the one phase
   region. II. The general case},
   journal={Comm. Math. Phys.},
   volume={161},
   date={1994},
   number={3},
   pages={487--514},
}

\bib{MOS}{article}{
   author={Martinelli, F.},
   author={Olivieri, E.},
   author={Schonmann, R. H.},
   title={For $2$-D lattice spin systems weak mixing implies strong mixing},
   journal={Comm. Math. Phys.},
   volume={165},
   date={1994},
   number={1},
   pages={33--47},
}
		
\bib{SaloffCoste}{article}{
   author={Saloff-Coste, Laurent},
   title={Lectures on finite Markov chains},
   conference={
      title={Lectures on probability theory and statistics},
      address={Saint-Flour},
      date={1996},
   },
   book={
      series={Lecture Notes in Math.},
      volume={1665},
      publisher={Springer},
      place={Berlin},
   },
   date={1997},
   pages={301--413},
}

\bib{SaloffCoste2}{article}{
  author = {Saloff-Coste, Laurent},
  title = {Random walks on finite groups},
  booktitle = {Probability on discrete structures},
  series = {Encyclopaedia Math. Sci.},
  volume = {110},
  pages = {263--346},
  publisher = {Springer},
  address = {Berlin},
  year = {2004},
}

\bib{SZ1}{article}{
   author={Stroock, Daniel W.},
   author={Zegarli{\'n}ski, Bogus{\l}aw},
   title={The equivalence of the logarithmic Sobolev inequality and the Dobrushin-Shlosman mixing condition},
   journal={Comm. Math. Phys.},
   volume={144},
   date={1992},
   number={2},
   pages={303--323},
}

\bib{SZ2}{article}{
   author={Stroock, Daniel W.},
   author={Zegarli{\'n}ski, Bogus{\l}aw},
   title={The logarithmic Sobolev inequality for continuous spin systems on a lattice},
   journal={J. Funct. Anal.},
   volume={104},
   date={1992},
   number={2},
   pages={299--326},
}

\bib{SZ3}{article}{
   author={Stroock, Daniel W.},
   author={Zegarli{\'n}ski, Bogus{\l}aw},
   title={The logarithmic Sobolev inequality for discrete spin systems on a lattice},
   journal={Comm. Math. Phys.},
   volume={149},
   date={1992},
   number={1},
   pages={175--193},
}

\bib{Zee1}{article}{
   author={Zegarli{\'n}ski, Bogus{\l}aw},
   title={Dobrushin uniqueness theorem and logarithmic Sobolev inequalities},
   journal={J. Funct. Anal.},
   volume={105},
   date={1992},
   number={1},
   pages={77--111},
}

\bib{Zee2}{article}{
   author={Zegarli{\'n}ski, Bogus{\l}aw},
   title={On log-Sobolev inequalities for infinite lattice systems},
   journal={Lett. Math. Phys.},
   volume={20},
   date={1990},
   number={3},
   pages={173--182},
}

\end{biblist}
\end{bibdiv}

\end{document}